\newtheorem{theorem}{Theorem}
\newtheorem{corollary}[theorem]{Corollary}
\newtheorem{proposition}[theorem]{Proposition}
\newtheorem{lemma}[theorem]{Lemma}
\theoremstyle{definition}
\newtheorem{definition}[theorem]{Definition}
\newtheorem{example}[theorem]{Example}
 \theoremstyle{remark}
\numberwithin{equation}{section}
\newcommand\mL{L\kern-0.08cm\char39}
\newcommand{\Id}{\mathop{\rm Id}}
\newcommand{\Int}{\mathop{\rm Int}}
\newcommand{\card}{\mathop{\rm card}}
\newcommand{\dist}{\mathop{\rm dist}}
\newcommand{\Orb}{\mathop{\rm Orb}}
\newcommand{\SSS}{\mathbb S}
\newcommand{\NNN}{\mathbb N}
\newcommand{\RRR}{\mathbb R}
\newcommand{\DDd}{\mathcal{D}}
\newcommand{\tr}{\mathop{\rm Tr}}
\newcommand{\intr}{\mathop{\rm Intr}}
\newcommand{\abs}[1]{\lvert#1\rvert}
\newcommand{\md}{d\kern-0.035cm\char39\kern-0.03cm}
\newcommand{\mt}{t\kern-0.035cm\char39\kern-0.03cm}
\newcommand{\ml}{l\kern-0.035cm\char39\kern-0.03cm}
\renewcommand{\langle}{[}
\renewcommand{\rangle}{]}
\newcommand{\Fix}{\operatorname{Fix}}
\newcommand{\Per}{\operatorname{Per}}
\newcommand{\Rec}{\operatorname{Rec}}
\begin{document}

\title[Dynamical consequences of a free interval]
{Dynamical consequences of a free interval:\\minimality, transitivity, mixing and topological entropy}

\author[M. Dirb\'ak, \mL. Snoha, V. \v Spitalsk\'y]{Mat\'u\v s Dirb\'ak, \mL{}ubom\'ir Snoha, Vladim\'ir \v Spitalsk\'y}
\address{Department of Mathematics, Faculty of Natural Sciences,
          Matej Bel University, Tajovsk\'eho 40, 974 01 Bansk\'a Bystrica,
          Slovakia}
\email{Matus.Dirbak@umb.sk, Lubomir.Snoha@umb.sk, Vladimir.Spitalsky@umb.sk}
\thanks{The first author was supported by the Slovak Research and Development Agency, grant
APVV LPP-0411-09. The second and the third authors were supported by the the Slovak Research and Development Agency, grant
APVV-0134-10 and by VEGA, grant 1/0978/11.}

\subjclass[2010]{Primary 37B05, 37B20, 37B40; Secondary 37E25, 54H20}

\dedicatory{Dedicated to our teacher, friend and colleague Alfonz Haviar on the occasion of his retirement.}

\keywords{Topological entropy, transitive system, mixing system, dense periodicity, continuum, free interval, disconnecting interval.}

\begin{abstract}
We study dynamics of continuous maps on compact metrizable spaces containing a free interval (i.e., an open subset homeomorphic to an open interval). A special attention is paid to
relationships between topological transitivity, weak and strong topological mixing,
dense periodicity and topological entropy as well as to the topological structure
of minimal sets. In particular, a trichotomy for minimal sets and a dichotomy for transitive maps are proved.
\end{abstract}

\maketitle

\section{Introduction}\label{S:introduction}
One-dimensional dynamics became an object of wide interest in the middle of 1970's,
some 10 years after Sharkovsky's theorem,
when chaotic phenomena were discovered by Li and Yorke in dynamics of interval maps.
As general references one can recommend the
monographs~\cite{CE80}, \cite{BC}, \cite{ALM} and~\cite{dMvS} (several motivations for studying one-dimensional
dynamical systems are discussed in the introduction of ~\cite{dMvS}). The interval and circle dynamics are well understood.
To extend/generalize the results to graph maps is sometimes quite easy, sometimes extremely difficult
(for instance the characterization of the set of periods for graph maps is known only in very special cases).
A good, more than 50 pages long survey of some topics in the dynamics of graph maps can be found in
an appendix in~\cite{ALM} (the second edition). Also the paper~\cite{Blo84} and the paper \cite{Blo}
with its two continuations under the same title are a must for everybody who wishes to study the dynamics of graph maps.

When working in one-dimensional topological dynamics it is natural to try to extend results from the interval to more general spaces.
One usually extends them (or slight modifications of them) first to the circle/trees and then to general graphs (then perhaps also to special kinds
of dendrites or to other one-dimensional continua; in the case of general dendrites often a counterexample can be found).
The present paper suggests that sometimes another approach can be more fruitful --- we show that some important facts from the topological dynamics on the interval/circle work on much more general spaces than graphs, namely on spaces containing an \emph{open} part looking like an interval (we will call it a \emph{free interval}). It seems that the first result indicating that the presence of a free interval might have important dynamical consequences for the whole space was obtained as early as 1988 in~\cite{Kaw}, for two other results see~\cite{AKLS} and \cite{HKO}. However, they were isolated in a sense and apparently have not attracted much attention; a systematic study of the influence of a free interval on dynamical properties of a space has not been done yet. Based on the main results of the present paper, we believe that namely the class of spaces with a free interval is a natural candidate for possible extension of classical results of one-dimensional topological dynamics. Of course, not all of them can be carried over from the interval/circle to spaces with a free interval (and then trees/graphs naturally enter the scene as candidates for possible extension). However, we hope that the elegance of the main results of the present paper (see Theorems~A,~B and~C) justifies our belief.

Recall the terminology which is being used to describe spaces studied in this paper.

\begin{definition}\label{D:free}
Let $X$ be a topological space. We say that
$J$ is a \emph{free interval of $X$} if it is an open subset of $X$ homeomorphic to an open
interval of the real line.
\end{definition}

\begin{definition}\label{D:disc}
Let $X$ be a connected topological space and $J$ be a free interval of $X$. We say that
$J$ is a \emph{disconnecting interval of $X$} if $X\setminus\{x\}$ has exactly two components for every point $x\in J$.
\end{definition}

The definition of a disconnecting interval is taken from \cite{AKLS}.
One can suggest several other ``natural'' definitions of a disconnecting interval. However, we warn the reader that
only some of them are equivalent in the setting of general connected Hausdorff topological spaces. We postpone a discussion
on this to Appendix~2.

Throughout the paper, by an \emph{interval} in $X$ we mean any nonempty subset of $X$ homeomorphic to a (possibly degenerate)
interval in $\RRR$. Of course, all free intervals are intervals.
An open subinterval of a free/disconnecting interval is also free/disconnecting.

An \emph{arc} $A$ is a homeomorphic image
of $[0,1]$. The closure of a free interval of $X$ need not be an arc (say, this closure may look like
the topologist's sine curve). If it \emph{is} an arc, it is called a \emph{free arc} of $X$.
Notice that a space $X$ contains a free arc if and only if it contains a free interval.

Here are all the results on the dynamics of continuous maps on general spaces with a free or a disconnecting interval, which we have found in the literature.

\begin{itemize}
\item No Peano continuum with a free arc admits an expansive homeomorphism (\cite{Kaw}). For generalizations see~\cite{MShi}, \cite{SW}.
\item If $X$ is a connected space with a disconnecting interval and a continuous map $f:X\to X$
      is transitive then the set of all periodic points of $f$ is dense in $X$ (\cite{AKLS}).
\item Let $X$ be a compact metrizable space with a free interval. Then every totally transitive continuous map $f: X\to X$ with dense periodic points is strongly mixing (\cite{HKO}).
\end{itemize}

We suggest to study the dynamics of continuous maps on spaces with a
free or a disconnecting interval systematically and compare the
results with those working on graphs or trees. For the present paper
we have chosen those problems which seemed to us most
important/interesting. The obtained results are potentially
applicable in the study of dynamics on some classes of
one-dimensional continua and some of them are even surprisingly
nice.

Let us now summarize the main results of this paper. The first two
theorems deal with minimal systems and minimal sets, the third one
is a dichotomy for transitive maps. Note that all the maps in the
paper are assumed to be continuous. For definitions see
Sections~\ref{S:preliminaries} and \ref{S:rpd} (here we recall only
those which are not widely known).

\newcommand{\rotationX}{$X$ is a disjoint union of finitely many circles,
$X=\bigoplus_{i=0}^{n-1}\SSS^1_i$, which are cyclically permuted by $f$
and, on each of them, $f^n$ is topologically conjugate to the same irrational rotation}

\newcommand{\rotationM}{$M$ is a disjoint union of finitely many circles,
$M=\bigoplus_{i=0}^{n-1}\SSS^1_i$, which are cyclically permuted by $f$
and, on each of them, $f^n$ is topologically conjugate to the same irrational rotation}

\newcommand{\theoremA}{
\renewcommand{\thetheorem}{A}
\begin{theorem}[\bf{Minimal spaces with a free interval}]
Let $X$ be a compact metrizable space with a free interval $J$ and let $f:X\to X$
be a minimal map. Then \rotationX{}.
\end{theorem}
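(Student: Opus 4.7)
The plan is to show that $X$ carries the structure of a compact topological $1$-manifold without boundary, and then to invoke the classification of such spaces together with the classical description of minimal self-maps of the circle. First I would use minimality to control the covering structure of $X$ by iterated preimages of the free interval: since $J$ is a nonempty open subset and $f$ is minimal, every forward orbit hits $J$, so $X=\bigcup_{k\ge 0}f^{-k}(J)$, and by compactness a finite sub-collection $X=\bigcup_{k=0}^{N}f^{-k}(J)$ suffices. By shrinking $J$ inside itself I may also assume that its closure is a free arc with two endpoints, which gives a concrete topological model to work with locally.

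The heart of the proof is to show that every point of $X$ has an open neighborhood homeomorphic to an open interval. Let $W\subseteq X$ be the set of such points; it is open in $X$ and contains $J$, hence is nonempty. I would aim to show that $X\setminus W$ is forward invariant (equivalently, $f^{-1}(W)\subseteq W$), so that, being closed and different from $X$, minimality forces it to be empty. This reduces to the local claim: if $f(x)\in W$ then $x\in W$, i.e.\ one must rule out ``branching'' or ``folding'' of $f$ at $x$. The intended approach is to attach to each point a discrete invariant (such as the number of local connected components of $X\setminus\{x\}$ for small punctured neighborhoods, or a branching valence) and argue that singular points form a small orbit-invariant set; minimality then forces this set to be empty. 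An alternative is to invoke the AKLS theorem on suitable connected subcontinua of $X$: on a connected space with a disconnecting interval, transitivity forces dense periodicity, which is incompatible with a nontrivial minimal system, ruling out certain local pathologies at would-be singular points.

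Once every point has an interval neighborhood, $X$ is a compact metrizable $1$-manifold without boundary, hence homeomorphic to a disjoint union of finitely many circles $\bigoplus_{i=0}^{n-1}\SSS^1_i$. The remaining steps are classical. Since $f$ is minimal, it must permute the connected components of $X$, and by minimality this permutation is an $n$-cycle. The first-return map $f^n$ on each $\SSS^1_i$ is then a minimal continuous self-map of the circle, hence by the Poincar\'e--Denjoy theorem a homeomorphism topologically conjugate to an irrational rotation. Conjugation via $f$ itself, which restricts to homeomorphisms $\SSS^1_i\to\SSS^1_{i+1}$, then guarantees that the rotation number is the same on every circle.

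The main obstacle is the topological step of propagating the interval structure from $J$ to the entire space $X$. Without any a priori guarantee that $f$ is open, injective, or a local homeomorphism, even excluding a single branch point of $X$ requires a careful dynamical argument combining minimality, the fine local structure of $X$ near $J$, and dynamical rigidity results such as AKLS; I expect this is where the bulk of the technical work lies, and where tools from the earlier sections of the paper (transitivity, recurrence, and structural lemmas for spaces with a free interval) will be brought to bear.
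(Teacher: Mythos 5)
Your overall plan is reasonable in outline, but it has a genuine gap exactly where you locate ``the bulk of the technical work'': the claim that $f^{-1}(W)\subseteq W$, i.e.\ that no branching or folding of $f$ can occur at a point $x$ with $f(x)\in W$, is never actually proved. This is not a technicality --- it is essentially the whole content of the theorem, and the tools you gesture at do not obviously deliver it. Attaching a ``branching valence'' to points of a general compact metrizable space is delicate (such invariants need not be well defined, upper semicontinuous, or preserved by a non-open continuous map), and the AKLS theorem is of no direct use here: it asserts dense periodicity for \emph{transitive} maps on connected spaces with a \emph{disconnecting} interval, whereas an infinite minimal system has no periodic points at all, so you would first have to manufacture a suitable transitive subsystem with a disconnecting interval, which you do not do. A secondary gap: Poincar\'e--Denjoy classifies minimal \emph{homeomorphisms} of the circle, so even after reducing to a circle you must still rule out non-invertibility of $f^n$ on each component.

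The paper avoids your manifold-structure step entirely. After the (standard) reduction to a single component, it proves on a continuum with a free interval that a minimal map is \emph{injective}: if $f(x)=f(y)$ with $x\ne y$, dense backward orbits pull $x$ back into $J$, and feeble openness of minimal maps lets one produce two disjoint open sets whose images under some $f^k$ overlap inside $J$; this yields a redundant open set, contradicting minimality (Lemmas~\ref{dense.backward}, \ref{feebly.open}, \ref{red.open.set}). Once $f$ is a homeomorphism, it shows that a transitive homeomorphism of a continuum with a free interval which is \emph{not} a circle would, upon collapsing the complement of a maximal free interval to a point, induce a transitive circle homeomorphism with a fixed point --- contradicting Denjoy (Lemmas~\ref{free.intrs}, \ref{irr.rot}). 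The circle structure thus emerges from a global contradiction argument rather than from verifying local Euclidean structure point by point; if you want to complete your proposal, you would need to supply an argument of comparable strength for your local claim, and the redundant-open-set mechanism is the natural candidate.
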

\addtocounter{theorem}{-1}
\renewcommand{\thetheorem}{\arabic{theorem}}
}

\newcommand{\theoremB}{
\renewcommand{\thetheorem}{B}
\begin{theorem}[\bf{Trichotomy for minimal sets}]
Let $X$ be a compact metrizable space with a free interval $J$ and let $f:X\to X$
be a continuous map. Assume that $M$ is a minimal set for $f$ which intersects
$J$. Then exactly one of the following three statements holds.
\begin{enumerate}
\item[(1)] $M$ is finite.
\item[(2)] $M$ is a nowhere dense cantoroid.
\item[(3)] $M$ is a disjoint union of finitely many circles.
\end{enumerate}
\end{theorem}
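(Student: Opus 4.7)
My plan is to run a dichotomy on the structure of $M \cap J$ inside the free interval $J$; note that $M \cap J$ is nonempty by hypothesis and open in $M$ since $J$ is open in $X$.

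The finite case is immediate: if $M$ is finite we are in (1). So I assume $M$ is infinite, whence $M$ is perfect — any isolated point of a minimal set is periodic, which together with the density of orbits would force $M$ to be a single finite orbit — and consequently $M \cap J$, being a nonempty open subset of $M$, is also perfect.

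Next, I split on whether $M \cap J$ has nonempty interior in $J$. If $\mathrm{int}_J(M \cap J) \neq \emptyset$, then $M$ contains a nondegenerate open subinterval $I$ of $J$. Since $I$ is open in $J$ and $J$ is open in $X$, the set $I$ is open in $X$, hence open in $M$; together with $I$ being homeomorphic to an open interval of $\mathbb{R}$, this makes $I$ a free interval of $M$. Applying Theorem~A to the minimal system $(M, f|_M)$ then yields case (3).

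If instead $M \cap J$ is nowhere dense in $J$, then, being also perfect and closed in $J \cong \mathbb{R}$, it is totally disconnected — a Cantor-like set sitting inside $J$. The remaining task, and the hardest part of the argument, is to lift this local picture to a global description of $M$ as a nowhere dense cantoroid. My main tool will be the finite-cover form of minimality: there exists $N$ with $M = \bigcup_{n=0}^{N}(f|_M)^{-n}(M \cap J)$, which realises $M$ as a finite union of relatively open (in $M$) sets each mapped by an iterate of $f$ into the totally disconnected set $M \cap J$. The main obstacle is then to propagate both nowhere-denseness and the cantoroid property from $M \cap J$ to all of $M$; since continuous images of open sets need not be fat, ruling out interior points of $M$ lying outside $J$ and showing that all components of $M$ are degenerate will require a delicate combination of minimality with the structure of the free interval — for instance by analysing inverse branches of iterates of $f$ applied to small subarcs of $J$, or by passing to the quotient of $M$ by its connected-component equivalence relation to obtain a totally disconnected minimal factor and transferring its structure back to $M$.
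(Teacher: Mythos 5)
Your treatment of cases (1) and (3) matches the paper's: the split on whether $M\cap J$ has nonempty interior in $J$ is equivalent to the paper's split on whether $M\cap J$ contains an arc, and the reduction of the interior case to Theorem~A applied to $(M,f|_M)$ is exactly what the paper does. The problem is case (2), where you explicitly defer ``the hardest part'' and offer only candidate tools; this is precisely where the real content of the theorem lies, and neither of the two substantive claims is established. First, to show that $M$ is a cantoroid you must prove that the \emph{degenerate components} of $M$ are dense in $M$ --- not that $M$ is totally disconnected, which is in general false (the paper notes after the statement that $M$ can be a cantoroid with nondegenerate components, so any plan aimed at total disconnectedness would fail). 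Your finite cover $M=\bigcup_{n=0}^{N}(f|_M)^{-n}(M\cap J)$ does not help here: a connected set $C$ in $(f|_M)^{-n}(M\cap J)$ only satisfies that $f^n(C)$ is a singleton, and preimages of singletons can be large continua. The paper's mechanism is different: by Lemma~\ref{unique.preimage} the set $R$ of points of $M$ with a unique $f|_M$-preimage is residual in $M$, by Lemma~\ref{image.residual} so is each $f^n(R)$, hence the open set $M\cap J$ meets $\bigcap_n f^n(R)$ in some $z_0$; this $z_0$ is a degenerate component (as $M\cap J$ is totally disconnected), its backward orbit is unique, each $z_n=(f|_M)^{-n}(z_0)$ is a full preimage and hence a union of components and therefore itself a degenerate component, and by Lemma~\ref{dense.backward} this backward orbit is dense in $M$. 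Your alternative suggestion of quotienting by components produces a totally disconnected minimal factor but gives no control on which fibres are singletons, so it does not transfer the cantoroid property back without essentially redoing the unique-backward-orbit argument.

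Second, you give no argument that $M$ is nowhere dense in $X$, and this also requires work: $M\cap J$ nowhere dense in $J$ does not formally prevent $M$ from having interior elsewhere in $X$. The paper argues by contradiction: if $M\supseteq U$ with $U$ open nonempty in $X$, take an arc $A\subseteq J$ meeting $M$ in a relatively open set; minimality gives $\bigcup_{i=0}^{n}f^i(A)\supseteq M\supseteq U$, Baire category gives some $f^j(A)$ containing a nonempty open $V\subseteq U$, local connectedness of the continuous image of an arc lets one take $V$ connected, and then minimality pushes $V$ into $J$ by some $f^k$, forcing the connected set $f^k(V)$ into a singleton component of $M\cap J$, whence $M$ is finite --- a contradiction. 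Without these two arguments (or genuine substitutes) your proof of case (2) is incomplete.
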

\addtocounter{theorem}{-1}
\renewcommand{\thetheorem}{\arabic{theorem}}
}

\newcommand{\theoremC}{
\renewcommand{\thetheorem}{C}
\begin{theorem}[\bf{Dichotomy for transitive maps}]
Let $X$ be a compact metrizable space with a free interval and let
$f:X\to X$ be a transitive map. Then exactly one of the following
two statements holds.
\begin{enumerate}
\item[(1)] The map $f$ is relatively strongly mixing, non-invertible,
 has positive topological entropy and dense periodic points.
\item[(2)] The space \rotationX{}.
\end{enumerate}
\end{theorem}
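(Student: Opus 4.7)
The plan is to split on minimality: dispatch the minimal case by Theorem~A and then show that a non-minimal transitive map must satisfy (1). First, the two alternatives are mutually exclusive, since (2) describes a minimal homeomorphism having zero topological entropy and no periodic points, whereas (1) requires dense periodic points, positive entropy, and non-invertibility. If $f$ is minimal, then $X$ itself is a minimal set meeting the free interval $J$, so Theorem~A immediately gives case~(2). From now on I assume $f$ is transitive but not minimal and aim to establish~(1).

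Working inside the free interval $J$, the first step is to produce a periodic point of $f$ in $J$. Pick an open subinterval $I$ of $J$ whose closure is a free arc in $X$. By transitivity, for every open $V\subset I$ there exists $n\ge 1$ with $f^n(V)\cap I\ne\emptyset$; combining this with non-minimality, some iterate $f^n$ admits a closed subinterval $K\subset I$ on which the restriction behaves like a continuous self-map of an arc with $f^n(K)\supset K$, so an intermediate-value argument yields a fixed point of $f^n$ in $K$. The next step upgrades this to a full horseshoe: exploiting non-minimality, which prevents $f^n|_K$ from acting in a rotation-like, homeomorphic way on arbitrarily small scales, one finds an iterate $f^m$ and two disjoint closed subintervals $K_1,K_2\subset J$ whose images under $f^m$ each cover $K_1\cup K_2$. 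The horseshoe simultaneously delivers positive topological entropy, non-invertibility (points with two preimages inside $J$), and periodic points of every sufficiently large period densely in $K_1\cup K_2$.

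It remains to propagate density of periodic points to all of $X$ and to deduce relative strong mixing. For density, a standard argument combining the horseshoe with transitivity shows that every nonempty open $U\subset X$ contains a point which, after being sent into $K_1\cup K_2$ by a suitable iterate and then shadowed along a horseshoe periodic orbit, returns to $U$, producing a genuine periodic point in $U$. If $f$ is totally transitive, the theorem of \cite{HKO} (totally transitive plus dense periodic points on a space with free interval implies strong mixing) finishes the argument. Otherwise the standard cyclic decomposition $X=X_0\cup\cdots\cup X_{k-1}$ for a transitive but not totally transitive map reduces the situation to applying \cite{HKO} to $f^k|_{X_i}$ (each $X_i$ still inherits a free interval from $J$), giving exactly the relatively strongly mixing property. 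The main technical difficulty is the horseshoe construction itself: although one-dimensional intuition suggests such a structure must appear, producing it rigorously from transitivity and non-minimality alone, with no control on $X\setminus J$, requires a careful analysis of returns of subintervals of $J$ under the iterates of $f$.
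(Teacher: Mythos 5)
Your skeleton (handle the minimal case with Theorem~A, then show a non-minimal transitive map satisfies (1)) agrees with the paper's, but the two central steps of your non-minimal case are asserted rather than proved, and the assertions are exactly where all the difficulty of the theorem lives. The first gap is the claim that transitivity plus non-minimality produces a closed subinterval $K\subset J$ with $f^n(K)\supseteq K$ ``behaving like a continuous self-map of an arc,'' so that the intermediate value theorem gives a periodic point. Knowing only $f^n(V)\cap I\ne\emptyset$ gives no control whatsoever on $f^n(K)$: the image can leave $J$, wander through $X\setminus J$ (about which nothing is assumed), and re-enter $I$ without containing any subinterval of $I$, so no IVT argument is available. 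This is precisely the obstruction the paper spends most of its machinery overcoming: it uses non-minimality to find an \emph{intransitive} point of $J$ whose orbit closure $S$ is a nonempty, closed, invariant, nowhere dense set meeting $J$; it collapses $S$ to a point to manufacture a fixed point on a free arc in a quotient system (Lemma~\ref{S.impl.per}); and then Lemma~\ref{perpt.freeint} combines the relative periodic--recurrent property (Theorem~\ref{per.rec.pts2}, whose Mai--Shao-style proof occupies Appendix~1) with the disconnecting-interval results of \cite{AKLS} applied to $\bigcup_n f^n(J)$ to get density of periodic points. Your proposal replaces none of this, and you concede as much in your last sentence.

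The second gap is the horseshoe. The paper never derives a horseshoe from ``non-minimality prevents rotation-like behaviour'': it first establishes dense periodic points, uses a periodic point in $J$ to show the decomposition ideal is finite (Lemma~\ref{finite.DI}), passes to a terminal regular periodic decomposition on which the relevant power of $f$ is totally transitive with dense periodic points, invokes Banks's theorem to get weak mixing, and only then obtains a covering relation among three disjoint arcs in $J$ (Lemma~\ref{weak-general}) feeding into the entropy estimate of Lemma~\ref{horseshoe}. Your order of deductions is reversed and the key implication is unsupported. Two further points would also need repair even granting the horseshoe: your ``shadowing'' argument for periodic points in an arbitrary open $U\subseteq X$ does not close up orbits (the paper instead uses that for a transitive map the closed invariant set $\overline{\Per(f)}$ is either nowhere dense or all of $X$), and your cyclic decomposition $X=X_0\cup\dots\cup X_{k-1}$ requires proving the decomposition ideal is finite, which is not automatic for a general transitive map and in the paper again rests on the periodic point in $J$.
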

\addtocounter{theorem}{-1}
\renewcommand{\thetheorem}{\arabic{theorem}}
}

\newcommand{\theoremAref}{Theorem~A}
\newcommand{\theoremBref}{Theorem~B}
\newcommand{\theoremCref}{Theorem~C}
\newcommand{\theoremABrefs}{Theorems~A and B}
\newcommand{\theoremABCrefs}{Theorems~A--C}

\medskip

It is well known that a graph admits a minimal map if and only if
it is a finite union of disjoint circles (see \cite{Blo84}, cf. \cite{BHS}). Our first result generalizes
this fact.

\theoremA

Also characterization of minimal sets on graphs is well known: these are finite sets, Cantor sets and finite disjoint unions
of circles, see \cite{BHS}. In \cite{BDHSS} this was generalized to local dendrites. In that connection the notion of a cantoroid
was introduced.
According to \cite{BDHSS}, a \emph{cantoroid} is a compact metrizable space without isolated points in which
degenerate (connected) components are dense.

The following theorem describes the minimal sets which intersect a free interval of a space.

\theoremB

Note that none of these three conditions can be removed. Already on the circle each of the three cases can occur; here in the case (2) $M$ is a Cantor set.
Moreover, if $X$ is a (local) dendrite containing a free interval as well as a non-degenerate nowhere dense subcontinuum
then there is a cantoroid different from a Cantor set which intersects that free interval and contains that subcontinuum
and so by \cite{BDHSS} it is a minimal set for some continuous selfmap of $X$. Hence, in the case (2) we cannot replace ``cantoroid'' by
``Cantor set''.

Obviously, in the case (2), $M\cap J$ is a union of countably many Cantor sets (sometimes such a set is called a \emph{Mycielski set}) and if $M\subseteq J$
then $M$ is a Cantor set.
In the case (3), $M$ contains the whole free interval $J$ and by \theoremAref{}, applied to $f|_M$, we get that
$M$ is a disjoint union of finitely many circles which are cyclically permuted by $f$.
Then, on each of these circles, the corresponding iteration of $f$ is topologically conjugate to the same irrational rotation.

\medskip

To state our next theorem we recall the following notion.
If $\DDd=(D_0,\dots,D_{n-1})$ is a regular periodic decomposition for $f$ (see Section~\ref{S:rpd})
we say, according to \cite{B}, that $f$ is \emph{strongly mixing relative to $\DDd$} if
$f^n$ is strongly mixing on each of the sets $D_i$. Also, we say that $f$ is \emph{relatively strongly mixing}
if it is strongly mixing relative to some of its regular periodic decompositions.

\theoremC

Three obvious remarks to \theoremCref{} can be made. First, if $X$ is a continuum with a \emph{disconnecting} interval
then necessarily the case (1) holds. Second, if $f$ is \emph{totally} transitive then
$X$ is a continuum (otherwise $X$ has $k\ge 2$ components permuted by $f$, a contradiction), in the case (1) the map $f$ is strongly mixing
(if not then the RPD from the definition of relative strong mixing has $m\ge 2$ elements and $f^m$ is not transitive)
and in the case (2) the space $X$ is a circle. Third, none of the two conditions in Theorem~C is superfluous. To see it, just consider the tent map on the interval and
an irrational rotation of the circle.

Kwietniak in \cite{Kwi11} independently obtained the following result
which follows from our \theoremCref{}:
Any weakly mixing map on a compact metrizable space  with  a free interval
is strongly mixing, has dense periodic points and positive entropy.

By \cite{Blo84}, on \emph{graphs} the following dichotomy,
stronger than that in our Theorem~C,
is true: A transitive system on a (not necessarily connected) graph either has the relative specification property
(hence also all the properties from the case (1))
or the case (2) from Theorem~C holds.
It is a challenge to solve the problem
whether this stronger dichotomy is true for every compact metrizable space
with a free interval.

\medskip

For some other results which are worth of mentioning and are not covered by
\theoremABCrefs{}, see Theorems~\ref{per.rec.pts}, \ref{per.rec.pts2}
and Corollary~\ref{no.ToP.sys}.

\medskip

Let us state some applications of our main results.

The Warsaw circle is one of the simplest continua which are not locally connected.
The dynamics on this particular space has been studied since 1996, see \cite{Warsaw96}.
The main result of the recent paper \cite{Warsaw08} says that
every transitive map on the Warsaw circle $W$ has a horseshoe (hence positive topological entropy) and dense periodic points
and is strongly mixing. Our \theoremCref{} gives, for granted, a result which is on one hand only slightly weaker and, on the other hand,
works on a whole class of spaces including the Warsaw circle. Namely if $X\ne\SSS^1$ is a continuum with a free interval
then every transitive map on $X$ has positive topological entropy and dense periodic points
and every totally transitive map on $X$ is strongly mixing.

Baldwin in \cite{Baldwin01} asked whether every transitive map on a dendrite has positive topological entropy.
The problem is still open but notice that our \theoremCref{} implies that
the answer is positive for dendrites whose branch points are not dense.

\medskip

The paper is organized as follows. In Section~\ref{S:preliminaries} we recall
some definitions and known facts from topological dynamics.
In Section~\ref{S:rpd} we apply the theory of regular periodic decompositions for transitive maps,
developed by Banks in \cite{B} in the setting of general topological spaces, to spaces with free intervals.
In Section~\ref{S:periodic-recurrent}
we study connections between recurrent and periodic points on spaces
with free intervals.
In Section~\ref{S:periodic-density}
we prove some conditions sufficient for the density
of (eventually) periodic points.
Section~\ref{S:minimality} deals with minimal systems and
minimal sets on spaces with free intervals; it contains proofs of \theoremABrefs{}.
Then in Sections~\ref{S:dense-periodicity}, \ref{S:entropy} and \ref{S:mixing}
we study, respectively, dense periodicity, topological entropy and strong mixing for transitive maps.
The proof of \theoremCref{} is contained in Section~\ref{S:mixing}.
A proof of Theorem~\ref{per.rec.pts2}, which is a generalization of
a theorem from~\cite{MS}, is given in Appendix~1.
Finally, in Appendix~2 we discuss relations between several ``natural'' definitions of a disconnecting interval,
see Proposition~\ref{disc.intrs}.
Moreover, the main results of both appendices are used in the proof of Lemma~\ref{perpt.freeint}.

\section{Preliminaries}\label{S:preliminaries}

Here we briefly recall all the notions and results which will be needed
in the rest of the paper.

We write $\NNN$ for the set of positive integers $\{1,2,3,\dots\}$
and $I$ for the unit interval $[0,1]$.
A \emph{space} means a topological space.
If $X$ is a connected space and $x\in X$ is a cut point of $X$ (i.e. $X\setminus\{x\}$ is not connected)
we also say that $x$ \emph{cuts} $X$
(by saying that \emph{$x$ cuts $X$ into two components}
we mean that $X\setminus\{x\}$ has exactly two components).
A \emph{continuum} is a connected compact metrizable space.
By $\Int A$, $\overline{A}$, $\partial{A}$ and $\card A$ we denote the interior, closure, boundary and
cardinality of $A$.
For definitions of a cantoroid, a free interval/arc and a disconnecting interval see Section~\ref{S:introduction}.
If $J$ is a free interval/arc then we always assume that one of two natural orderings
(induced by usual orderings on a real interval) is chosen and denoted by $\prec$.
We will use the usual notations for subintervals of $J$, say we write
$[a,b) = \{x\in J:\ a\preceq x \prec b\}$ for $a\prec b$ in $J$.
Throughout the paper no distinction is made between a point $x$ and the singleton $\{x\}$.

A \emph{(discrete) dynamical system} is a pair $(X,f)$ where $X$ is a topological
space and $f:X\to X$ is a (possibly non-invertible)
continuous map. The iterates of $f$ are defined by $f^0=\Id_X$ (the identity map on $X$) and
$f^n=f^{n-1}\circ f$ for $n\ge 1$.
The \emph{orbit} of $x$ is the set $\Orb_f(x) = \{f^n(x):\ n\ge 0\}$.
A point $x\in X$ is a \emph{periodic point} of $f$ if $f^n(x)=x$ for some $n\in\NNN$.
The smallest such $n$ is called the \emph{period} of $x$.
If $f^m(x)$ is periodic for some $m\in\NNN$ we say that $x$ is \emph{eventually periodic}.
A point $x$ is \emph{recurrent} if for every neighborhood $U$ of $x$ there are arbitrarily large $n$
with $f^n(x)\in U$.
By $\Per(f)$ or $\Rec(f)$ we denote the set of all periodic or recurrent points of $f$, respectively.

A system $(X,f)$ is \emph{minimal} if every orbit is dense. A set $A\subseteq X$ is \emph{minimal}
for $f$ if it is nonempty, closed, \emph{$f$-invariant} (i.e. $f(A)\subseteq A$) and
$(A,f|_A)$ is a minimal system. A system $(X,f)$ is called \emph{totally minimal} if
$(X,f^n)$ is minimal for every $n=1,2,\dots$.

A dynamical system $(X,f)$ is \emph{(topologically) transitive} if
for every non-empty open sets $U,V\subseteq X$ there is $n\in\NNN$ such that
$f^n(U)\cap V\ne\emptyset$. A point whose orbit is dense is called a \emph{transitive point};
points which are not transitive are called \emph{intransitive}.
The set of transitive or intransitive points of $f$ is denoted by $\tr(f)$ or $\intr(f)$, respectively.
If $(X,f^n)$ is transitive for all $n\in\NNN$ we say that $(X,f)$ is called \emph{totally transitive}.
If $(X\times X, f\times f)$ is transitive then $(X,f)$
is called \emph{(topologically) weakly mixing}.
It is well known that if $X$ is a compact metrizable space and $(X,f)$ is weakly mixing then for every $n\ge 1$ the system
$(X\times \dots \times X, f\times \dots \times f)$ ($n$-times) is topologically transitive.
A system $(X,f)$ is
\emph{(topologically) strongly mixing}
provided for every non-empty open sets $U,V\subseteq X$ there is $n_0\in\NNN$ such that
$f^n(U)\cap V\ne\emptyset$ for every $n\ge n_0$.
Instead of saying that a system $(X,f)$ has some of the defined properties (minimality, transitivity, \dots)
we also say that the map $f$ itself has this property.

Finally, the \emph{topological entropy} of $(X,f)$ will be denoted by $h(f)$. We assume that the reader is familiar with
Bowen's definition of topological entropy which uses the notion of $(n,\varepsilon)$-separated sets;
see for instance \cite{ALM}.

\begin{lemma}[\cite{B}]\label{TT+DPimplWM}
Let $X$ be a topological space and $f:X\to X$ a continuous map.
If $f$ is totally transitive and has dense set of periodic points,
then $f$ is weakly mixing.
\end{lemma}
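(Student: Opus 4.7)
The plan is to prove directly that $f\times f$ is transitive on $X\times X$, which is the definition of weak mixing. So I would fix four non-empty open sets $U_0, V_0, U_1, V_1$ in $X$ and try to produce a single integer $N\ge 1$ such that $f^N(U_i)\cap V_i\ne\emptyset$ for both $i=0,1$.

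First I would apply transitivity of $f$ to the pair $(U_0,V_0)$ to obtain $n\ge 1$ and a non-empty open $U_0'\subseteq U_0$ with $f^n(U_0')\subseteq V_0$. Next, using density of periodic points, I would pick a periodic point $p\in U_0'$ of some period $k$. For every $j\ge 0$ we then have $f^{n+jk}(p)=f^n(p)\in V_0$, so \emph{any} integer $N$ of the form $n+jk$ already witnesses $f^N(U_0)\cap V_0\ne\emptyset$. The problem thus reduces to selecting $j$ so that simultaneously $f^{n+jk}(U_1)\cap V_1\ne\emptyset$.

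For this I would invoke transitivity of $f^k$ (which is provided by total transitivity) applied to the open sets $U_1$ and $f^{-n}(V_1)$: some $j\ge 1$ yields $(f^k)^j(U_1)\cap f^{-n}(V_1)\ne\emptyset$, which is precisely $f^{n+jk}(U_1)\cap V_1\ne\emptyset$, so $N=n+jk$ works.

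The only delicate point, and what I expect to be the main conceptual obstacle, is justifying that the open set $f^{-n}(V_1)$ is non-empty, because in a general topological space $f$ need not be surjective. Here dense periodicity enters a second time: pick any periodic point $q\in V_1$ of some period $k'$ and any $j'$ with $j'k'\ge n$; since $f^{j'k'}(q)=q\in V_1$, the point $f^{j'k'-n}(q)$ lies in $f^{-n}(V_1)$. This observation---that under dense periodicity every non-empty open set has non-empty preimage of any order---is the only step where the two hypotheses genuinely cooperate beyond what is visible from a single periodic orbit; everything else is bookkeeping along the orbit of $p$.
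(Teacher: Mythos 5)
Your argument is correct. Note that the paper itself offers no proof of this lemma --- it is quoted from Banks's paper on regular periodic decompositions --- so there is nothing internal to compare against; your proof is essentially the standard argument from that reference: reduce transitivity of $f\times f$ to basic open boxes, use a periodic point in $U_0\cap f^{-n}(V_0)$ to make the return times to $V_0$ an arithmetic progression $n+k\NNN$, and then use transitivity of $f^k$ to hit $V_1$ from $U_1$ along that progression. Your extra remark on why $f^{-n}(V_1)\ne\emptyset$ in the absence of surjectivity is a genuine point in the general topological-space setting and is handled correctly.
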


\begin{lemma}[\cite{AKLS}]\label{exist.per.pt}
Let $X$ be a connected topological space with a disconnecting interval $J$ and
let $f:X\to X$ be a continuous map. Assume that there exist
$x,y\in J$ and $n,m\geq 1$ such that $f^n(x), f^m(y)\in J$,
$f^n(x)\prec x$ and $f^m(y)\succ y$. Then $f$ has a periodic
point in the convex hull of $\{ x,y,f^n(x),f^m(y)\}$.
\end{lemma}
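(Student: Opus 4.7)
The plan is to reduce the problem to an intermediate-value-theorem-type lemma for disconnecting intervals and then apply it to the iterate $F=f^{nm}$, any fixed point of which is automatically a periodic point of $f$. Without loss of generality I assume $x\prec y$ in $J$ (the other case is symmetric); setting $u=f^n(x)$ and $v=f^m(y)$ we have $u\prec x\prec y\prec v$, and the convex hull is the closed subinterval $K=[u,v]\subseteq J$.

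I would first establish the following IVT for disconnecting intervals: for a continuous $g:X\to X$, if $a\prec b$ are in $J$ with $g(a)$ in the ``left'' component $X_L(a)$ of $X\setminus\{a\}$ and $g(b)$ in the ``right'' component $X_R(b)$, then $g$ has a fixed point in $[a,b]\subseteq J$ (a symmetric ``reversed'' version also holds). The structural ingredient is the monotonicity $X_L(t)\subseteq X_L(t')$ whenever $t\prec t'$ in $J$, which follows because the set $X_L(t)\cup\{t\}\cup(t,t')$ is connected, lies in $X\setminus\{t'\}$, and meets $X_L(t')$; symmetrically $X_R(t')\subseteq X_R(t)$. With this in hand, the sets $L=\{t\in[a,b]:g(t)\in X_L(t)\}$ and $R=\{t\in[a,b]:g(t)\in X_R(t)\}$ are open (for $t_0\in L$ with $g(t_0)\ne t_0$, pick $z\in J$ close to $t_0$ with $g(t_0)\in X_L(z)$; continuity of $g$ and monotonicity then place $g(t)\in X_L(z)\subseteq X_L(t)$ for $t$ near $t_0$ on the correct side of $z$), disjoint, and contain $a$ and $b$. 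Connectedness of $[a,b]$ then forces the complementary fixed-point set to be nonempty.

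I would then apply the IVT to $F=f^{nm}$ on $[x,y]\subseteq K$. Arguing by contradiction, assume $f$ has no periodic point in $K$; then no iterate $f^k$ has a fixed point in $K$. Trace the orbit $x_k=(f^n)^k(x)$, starting with $x_0=x$ and $x_1=u\in X_L(x)$. I claim all subsequent $x_k$ remain in $X_L(x)$. If instead some $x_{k+1}$ lies in $X_R(x)$, take the smallest such $k$, so $x_k\in X_L(x)$; by monotonicity $x_{k+1}\in X_R(x)\subseteq X_R(x_k)$, and $f^n(x)=x_1\in X_L(x)$. When $x_k\in J$ (hence $x_k\prec x$ in $J$), the reversed IVT applied to $f^n$ on $[x_k,x]_J$ immediately produces a fixed point of $f^n$ in $K$, contradicting our assumption. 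Granted the claim, $F(x)=x_m\in X_L(x)$; symmetrically, $F(y)\in X_R(y)$ via the $f^m$-orbit of $y$. The IVT applied to $F$ on $[x,y]$ then yields a fixed point of $F$ in $K$ — a periodic point of $f$ — contradicting our standing assumption.

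The main obstacle is the subcase $x_k\notin J$, where the subinterval $[x_k,x]_J$ is undefined and the reversed IVT cannot be invoked directly. The remedy is to exploit the connectedness of $\overline{X_L(x)}=X_L(x)\cup\{x\}$ in $X$: its image under $f^n$ is connected, contains $x_1\in X_L(x)$ and $x_{k+1}\in X_R(x)$, and therefore must contain $x$ itself. One then locates a preimage of $x$ under $f^n$ actually inside $J\cap X_L(x)$, using the continuity of $f$ together with the structure of $J$ near $x$, and applies the reversed IVT on the resulting subinterval of $J$ to reduce to the previously handled case. Completing this last step cleanly — producing a preimage genuinely inside $J$ rather than only in the component $X_L(x)$ — is the delicate technical point of the argument.
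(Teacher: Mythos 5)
A framing remark first: the paper does not prove this lemma at all --- it is imported verbatim from \cite{AKLS} --- so there is no in-paper proof to compare against, and your argument has to stand on its own.

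Your preparatory material is essentially sound: the monotonicity $X_L(t)\subseteq X_L(t')$ for $t\prec t'$, the openness of the sets $L$ and $R$, and the resulting intermediate-value principle on a subinterval of $J$ are all correct (the openness step silently uses that $X_L(t_0)=X_L(z)\cup[z,t_0)$, so that a suitable $z\prec t_0$ with $g(t_0)\in X_L(z)$ exists; this needs a short extra argument via connectedness of the two components, but it is true). The case in which every $x_k=(f^n)^k(x)$ stays in $X_L(x)$ and every $(f^m)^k(y)$ stays in $X_R(y)$ is also handled correctly. The genuine gap is the escape case. Suppose $x_k\in J\cap X_L(x)$ and $x_{k+1}\in X_R(x)$. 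Your reversed IVT applied to $f^n$ on $[x_k,x]$ does produce a fixed point of $f^n$, but only somewhere in $[x_k,x]$; since $x_k$ (for $k\ge 2$) can lie far to the left of $u=f^n(x)$, that fixed point may lie in $[x_k,u)$, outside the convex hull $K=[u,v]$. The contracting configuration admits no localization (already on the real line, $g(a)>a$ and $g(b)=c\in(a,b)$ can force the unique fixed point into $(a,c)$), so the claim that this ``immediately produces a fixed point of $f^n$ in $K$'' is unjustified. The sub-case $x_k\notin J$ is worse: the connectedness argument only yields a preimage of $x$ under $f^n$ somewhere in $X_L(x)$, which may be entirely outside $J$, so there is no interval of $J$ on which to run the IVT; and even granting such a preimage in $J$, the same localization failure recurs. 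As written, the proof does not deliver a periodic point in the convex hull, which is exactly what the paper needs (e.g.\ in the proof of Theorem~\ref{per.rec.pts}).

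A repair that keeps your machinery but avoids both problems: assume $f$ has no periodic point in $K=[u,v]$. Then for each $j\ge1$ the map $f^j$ has no fixed point in $K$, so the two disjoint relatively open sets $\{t\in K:\ f^j(t)\in X_L(t)\}$ and $\{t\in K:\ f^j(t)\in X_R(t)\}$ cover the connected set $K$, and hence one of them equals $K$; write $\epsilon(j)\in\{L,R\}$ for which one. Since $x\in K$ and $f^n(x)=u\in X_L(x)$, we get $\epsilon(n)=L$; since also $u\in K$, induction on $k$ via $f^{kn}(x)=f^{(k-1)n}(u)\in X_L(u)\subseteq X_L(x)$ gives $\epsilon(kn)=L$ for all $k\ge1$. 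Symmetrically $\epsilon(km)=R$ for all $k\ge1$, and $j=nm$ yields a contradiction. This replaces your pointwise orbit-tracking (where you lose control both of membership in $J$ and of position relative to $K$) by the global side-function $\epsilon$, and the contradiction is reached without ever having to locate a fixed point outside $K$.
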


\begin{lemma}[\cite{AKLS}]\label{denseperiod}
Let $X$ be a connected topological space with a disconnecting interval and let
$f:X\to X$ be a transitive map. Then the set of all periodic points of
$f$ is dense in $X$.
\end{lemma}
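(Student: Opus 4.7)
The plan is to first establish density of $\Per(f)$ in the disconnecting interval $J$, and then bootstrap this to density in all of $X$ using forward invariance of $\overline{\Per(f)}$ together with the transitivity hypothesis.

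For the first step, fix an arbitrary open subinterval $(a,b)\subseteq J$ and argue by contradiction, assuming $(a,b)\cap\Per(f)=\emptyset$. Lemma~\ref{exist.per.pt}, applied to the disconnecting interval $J$, rules out the simultaneous existence of $x,y\in(a,b)$ and $n,m\geq 1$ with $f^n(x),f^m(y)\in(a,b)$, $f^n(x)\prec x$ and $f^m(y)\succ y$, for the resulting periodic point would lie in the convex hull of $\{x,y,f^n(x),f^m(y)\}\subseteq(a,b)$. Hence all returns to $(a,b)$ move in a single fixed direction; without loss of generality always $\preceq$, and in fact strictly $\prec$ since no point of $(a,b)$ is periodic. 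Now pick a transitive point $z\in X$; its orbit must visit $(a,b)$ infinitely often, since otherwise a tail of the orbit lies in the closed set $X\setminus(a,b)$, and taking closures forces the infinite set $(a,b)$ to be contained in a finite initial segment of the orbit, an absurdity. Enumerate the visits as $z_k=f^{n_k}(z)$ with $n_1<n_2<\cdots$; then $z_{k+1}=f^{n_{k+1}-n_k}(z_k)\prec z_k$, so $\{z_k\}$ is strictly decreasing in the order of $J$ with infimum some $c\in[a,b)$. But then, for any $c'\in(c,b)$, only finitely many $z_k$ lie in the nonempty open set $(c',b)\subseteq X$, contradicting density of the orbit of $z$.

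For the second step, set $P:=\overline{\Per(f)}$. Since $\Per(f)$ is forward $f$-invariant and $f$ is continuous, $f(P)\subseteq P$. By Step~1, $P\supseteq J$. Suppose, for contradiction, that $P\neq X$, so $Q:=X\setminus P$ is nonempty and open. Applying transitivity to the open sets $J$ and $Q$ yields some $n\geq 1$ with $f^n(J)\cap Q\neq\emptyset$; but forward invariance gives $f^n(J)\subseteq f^n(P)\subseteq P$, which is disjoint from $Q$ --- a contradiction. Hence $P=X$, which is the claim.

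The main obstacle is the monotone-returns dichotomy in Step~1: it is precisely the disconnecting property of $J$ that enables the use of Lemma~\ref{exist.per.pt} and forces a single globally fixed direction of return on $(a,b)$, after which the contradiction with orbit density is essentially immediate. Step~2 is then a clean formal consequence of transitivity combined with forward invariance of $\overline{\Per(f)}$, and requires no further topological hypotheses on $X$.
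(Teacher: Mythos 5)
The paper does not prove this lemma itself---it is quoted from \cite{AKLS}---so your argument must stand on its own. Your Step~2 is correct, and the overall shape of Step~1 (use Lemma~\ref{exist.per.pt} on the disconnecting subinterval $(a,b)$ to force all returns to move in a single direction, then contradict transitivity) is the right idea. The genuine gap is your invocation of a transitive point $z$. The lemma is stated for an arbitrary connected topological space, and transitivity is defined purely in terms of pairs of open sets; in this generality a transitive map need not admit any point with dense orbit (the Birkhoff argument needs something like a second countable Baire space). The paper is explicit about this at the start of Section~\ref{S:periodic-density}, where compact metrizability is imposed precisely because the density of transitive points is otherwise unavailable. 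Worse, the paper applies Lemma~\ref{denseperiod} (inside the proof of Lemma~\ref{perpt.freeint}) to the component $Y_0$ of the non-closed invariant set $\bigcup_n f^n(J)$, which is not compact, so the gap cannot be dismissed by appealing to the paper's eventual standing hypotheses.

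The repair is short and makes the orbit-following argument unnecessary. Once you know (WLOG) that every return satisfies $f^n(x)\prec x$ whenever $x,f^n(x)\in(a,b)$, choose $a\prec u\prec v\prec b$ and apply the definition of transitivity to the nonempty open sets $U=(a,u)$ and $V=(v,b)$: there are $n\geq 1$ and $x\in U$ with $f^n(x)\in V$, i.e.\ a return with $f^n(x)\succ x$, an immediate contradiction. Equivalently, apply transitivity to both ordered pairs $(U,V)$ and $(V,U)$ to produce returns in each direction and feed them straight into Lemma~\ref{exist.per.pt}; this is essentially the argument of \cite{AKLS}. A smaller imprecision in your Step~1, were it salvageable: finitely many orbit points in $(c',b)$ does not by itself contradict density (density requires one visit, not infinitely many); you would still need to delete those finitely many points from $(c',b)$ to exhibit a nonempty open set missed by the orbit.
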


Our \theoremCref{} is stronger than the following result from \cite{HKO};
however, we will use it in the course of proving \theoremCref{}.

\begin{lemma}[\cite{HKO}]\label{TT+DPimplSM}
Let $X$ be a compact metrizable space with a free interval.
Then every totally transitive map $f:X\to X$ with dense periodic points
is strongly mixing.
\end{lemma}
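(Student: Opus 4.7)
The plan is to first apply Lemma~\ref{TT+DPimplWM} to obtain weak mixing of $f$, and then to use the free interval $J$ to promote weak mixing to strong mixing via a localized one-dimensional analysis near a periodic point of $f$ lying in $J$.

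By Lemma~\ref{TT+DPimplWM}, $f$ is weakly mixing, so for every pair of nonempty open sets $U,V\subseteq X$ the return-time set $N(U,V)=\{n\in\NNN:f^n(U)\cap V\neq\emptyset\}$ is thick, i.e.\ contains arbitrarily long runs of consecutive integers; the remaining task is to strengthen thickness to cofiniteness. By density of periodic points, fix $p\in J$ periodic for $f$ of period $k$, set $g=f^k$, and choose a closed sub-interval $[a,b]\subseteq J$ with $p\in(a,b)$. Three claims drive the argument. First, for every nonempty open $U\subseteq X$, some iterate $f^{j}(U)$ contains a nondegenerate subinterval $I_U\subseteq(a,b)$; this is obtained by taking a transitive point of $U$ whose orbit visits a small subinterval of $(a,b)$ near $p$, and using continuity together with the one-dimensional structure of $J$ to produce the covering. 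Second (the \emph{local expansion} claim), for every nondegenerate subinterval $I\subseteq(a,b)$ there exists $m_0\in\NNN$ such that $g^m(I)\supseteq[a,b]$ for all $m\geq m_0$. Third, for every nonempty open $V\subseteq X$ and every residue $r\in\{0,1,\ldots,k-1\}$ there exist $i_r\equiv r\pmod k$ and a subinterval $J_V^{(r)}\subseteq(a,b)$ with $f^{i_r}(J_V^{(r)})\subseteq V$; this follows from transitivity of $f^k$ (a consequence of total transitivity of $f$) applied to the pair of open sets $(a,b)$ and $f^{-r}(V)$, together with continuity and the fact that $f$ is surjective (a standard consequence of transitivity on a compact Hausdorff space).

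Combining the three claims: for any nonempty open $U,V\subseteq X$ we obtain $f^{j+km+i_r}(U)\supseteq f^{i_r}(J_V^{(r)})\subseteq V$ for every $m\geq m_0$ and every $r\in\{0,1,\ldots,k-1\}$. As $r$ varies, the values $j+i_r$ cover all residue classes modulo $k$, so $N(U,V)$ contains a tail of every residue class and is therefore cofinite, whence $f$ is strongly mixing. The main obstacle is the local expansion claim. It must rule out non-mixing local behavior of $g$ on $J$ — such as a monotone near-identity action or a rotation-like action on a small invariant subinterval of $J$ around $p$ — which would still be consistent with thickness but not with cofiniteness. The decisive leverage is density of periodic points: every neighborhood of $p$ inside $J$ contains infinitely many periodic points of $f$, hence common fixed points of some iterate $g^N$; combined with total transitivity these force $g$ (or a suitable iterate) to be locally non-monotone on $J$, yielding a horseshoe on $[a,b]$ whose forward images cover $[a,b]$ for all sufficiently large times.
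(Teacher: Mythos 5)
First, note that the paper does not actually prove this lemma: it is imported verbatim from \cite{HKO}, so there is no internal proof to compare against and your attempt must stand on its own. Its skeleton (Lemma~\ref{TT+DPimplWM} to get weak mixing, then a localization to the free interval to upgrade thick return-time sets to cofinite ones) is the right kind of plan, and Claim~3 together with the final residue-class bookkeeping is fine. But the two claims carrying the analytic content are not established. In Claim~1 the justification fails: if $x\in U$ is a transitive point with $f^{j}(x)\in(a,b)$, continuity only says that points near $x$ land near $f^{j}(x)$; it does not make $f^{j}(U)$ contain a nondegenerate subinterval of $(a,b)$ (the continuous image of an open set need not be a neighbourhood of any of its points). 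The claim is repairable, but by a different route: use transitivity to find a nondegenerate subinterval $W\subseteq(a,b)$ and $n_1$ with $f^{n_1}(W)\subseteq U$, so that $f^{n_1+t}(W)\subseteq f^{t}(U)$ is a \emph{connected} set; then use weak mixing to make $f^{n_1+t}(W)$ meet three pairwise disjoint subintervals of $(a,b)$ and invoke the separation argument of assertion (A) in the proof of Lemma~\ref{weak-general} to extract a nondegenerate subarc.

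The serious gap is Claim~2. The assertion that $g^{m}(I)\supseteq[a,b]$ for \emph{all} large $m$ is essentially the theorem itself in covering form, and the justification offered (dense periodic points force local non-monotonicity, hence a horseshoe, hence eventual covering of $[a,b]$) does not go through. ``Monotone'' is not meaningful here, since $g$ maps $J$ into $X$ rather than into $J$; a horseshoe $g^{N}(A_i)\supseteq A_1\cup A_2$ yields covering along an arithmetic progression and positive entropy, not covering at \emph{every} large time, and says nothing about an arbitrary subinterval $I$. Worse, the conclusion $g^{m}(I)\supseteq[a,b]$ cannot be reached by the connectedness arguments available in this setting: when $J$ is merely free (not disconnecting), a connected subset of $X$ containing points $x\prec c\prec y$ of $J$ need not contain $c$, because it can travel around through $X\setminus J$ (this already happens on the circle, and on the Warsaw circle the complement of a small subarc of $J$ is still connected). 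The three-point assertion (A) only ever forces \emph{one of two alternative} subarcs into a connected set, so it can never pin down a prescribed point of $[a,b]$; hence Claim~2 is unproved and quite possibly false as stated. To salvage the strategy one must replace it by a weaker, provable covering statement --- for instance that all sufficiently large $g$-iterates of $I$ contain at least one of finitely many \emph{fixed} nondegenerate subarcs determined by several $g$-fixed periodic points inside $I$ --- and then rework the concluding step so that it only uses this weaker information. As written, the proof has a genuine gap at its central step.
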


We will use several topological properties of minimal systems
which we summarize in the following lemmas.

A set $G\subseteq X$ is said to be a \emph{redundant open set for a map $f: X \to
X$} if $G$ is nonempty, open and $f(G)\subseteq f(X\setminus G)$
(i.e., its removal from the domain of $f$ does not change the image
of $f$).

\begin{lemma}[\cite{KST}]\label{red.open.set}
Let $X$ be a compact Hausdorff space and $f: X \to X$ continuous.
Suppose that there is a redundant open set for $f$. Then the system $(X,f)$ is not minimal.
\end{lemma}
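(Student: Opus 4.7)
My plan is to argue by contradiction: assuming $(X,f)$ is minimal, I will extract a proper nonempty closed $f$-invariant subset of $X$ and invoke the standard characterization of minimality (no nonempty closed proper invariant subset).

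Let $G$ be the redundant open set and set $Y := X\setminus G$, a proper closed subset. The hypothesis $f(G)\subseteq f(Y)$ yields
$f(X)=f(G)\cup f(Y)=f(Y)$. Since $(X,f)$ is minimal on a compact Hausdorff space, $f$ must be surjective: the image $f(X)$ is closed, nonempty and forward invariant, so minimality forces $f(X)=X$. Combining the two equalities gives $f(Y)=X$, and in particular $Y\subseteq f(Y)$, so every point of $Y$ admits at least one preimage inside $Y$.

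The key step is then to trap an entire forward orbit in $Y$. I would consider
$Z := \bigcap_{n\ge 0} f^{-n}(Y)$,
which is automatically closed and $f$-invariant. For nonemptyness, iterating the preimage observation above produces, given any $y_0\in Y$ and any $N\in\NNN$, points $y_{-1},\dots,y_{-N}\in Y$ with $f(y_{-k-1})=y_{-k}$, so that $y_{-N}\in\bigcap_{n=0}^{N} f^{-n}(Y)$. Thus the decreasing family $\{\bigcap_{n=0}^{N} f^{-n}(Y)\}_{N}$ consists of nonempty closed subsets of the compact space $X$, so by the finite intersection property $Z\neq\emptyset$. Since $\emptyset\neq Z\subseteq Y\subsetneq X$, the set $Z$ is a proper nonempty closed $f$-invariant subset of $X$, contradicting minimality.

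The argument is short and I do not anticipate a genuine obstacle. The only points requiring a moment's care are the surjectivity of $f$ (needed to upgrade $f(X)=f(Y)$ to $f(Y)=X$) and the compactness input in the finite intersection argument; both use exactly the compact Hausdorff assumption and not any metric or separability structure.
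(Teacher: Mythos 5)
Your argument is correct. Note that the paper itself gives no proof of this lemma --- it is imported verbatim from [KST] --- so there is nothing internal to compare against; in [KST] the statement is deduced from the fact that a minimal map on a compact Hausdorff space is \emph{irreducible}, i.e.\ no proper closed subset maps onto $X$, which is exactly the content of your observation that $f(X\setminus G)=f(X)=X$. Your route to that contradiction --- pulling back $Y=X\setminus G$ along all iterates and using compactness to show $Z=\bigcap_{n\ge 0}f^{-n}(Y)$ is a nonempty, closed, proper, forward-invariant set --- is the standard proof of irreducibility and is complete: surjectivity of $f$ is correctly justified from minimality, $Y\ne\emptyset$ because $f(G)\ne\emptyset$ forces $G\ne X$, and the finite intersection property applies since the sets $\bigcap_{n=0}^{N}f^{-n}(Y)$ are closed, nested and nonempty. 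No gap.
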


\begin{lemma}[\cite{KST}]\label{feebly.open}
Let $X$ be a compact Hausdorff space and let $f:X\to X$ be
a minimal map. Then $f$ is feebly open, i.e. $f$
sends nonempty open sets to sets with nonempty interior.
\end{lemma}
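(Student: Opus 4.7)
The plan is to argue by contradiction using Lemma~\ref{red.open.set}. Suppose $f$ is minimal but not feebly open; then there exists a nonempty open set $U\subseteq X$ with $\Int f(U)=\emptyset$. I will show that $U$ itself is a redundant open set for $f$, i.e.\ that $f(U)\subseteq f(X\setminus U)$. This contradicts Lemma~\ref{red.open.set} (since $(X,f)$ is minimal) and finishes the proof. Before checking redundancy, I record the standard fact that a minimal map on a compact Hausdorff space is surjective: $f(X)$ is compact hence closed, and $f$-invariant, so by minimality $f(X)=X$. In particular, $f(f^{-1}(V))=V$ for every $V\subseteq X$.

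To verify $f(U)\subseteq f(X\setminus U)$, assume toward contradiction that some $y\in f(U)$ is not in $f(X\setminus U)$. The set $f(X\setminus U)$ is the continuous image of a closed subset of the compact space $X$, hence compact, and closed since $X$ is Hausdorff. By regularity of compact Hausdorff spaces, $y$ has an open neighborhood $V$ disjoint from $f(X\setminus U)$. This disjointness forces $f^{-1}(V)\subseteq U$, for any point of $f^{-1}(V)\cap(X\setminus U)$ would produce an image lying in both $V$ and $f(X\setminus U)$. Combining this inclusion with surjectivity gives $V=f(f^{-1}(V))\subseteq f(U)$, so $V$ is a nonempty open subset of $f(U)$, contradicting $\Int f(U)=\emptyset$.

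The argument is quite short once one hits on the right candidate for the redundant open set. The only real obstacle is to notice that one should try $U$ itself as $G$ and then exploit the surjectivity of $f$ to convert the purely set-theoretic inclusion $f^{-1}(V)\subseteq U$ into the topological consequence $V\subseteq f(U)$; everything else is routine manipulation of closed sets and open neighborhoods in a compact Hausdorff space.
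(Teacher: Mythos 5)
Your proof is correct. The paper does not prove this lemma itself but only cites \cite{KST}; your argument — reducing non-feeble-openness to the existence of a redundant open set and invoking Lemma~\ref{red.open.set}, with surjectivity of minimal maps supplying $V=f(f^{-1}(V))\subseteq f(U)$ — is exactly the standard route taken in that source, and every step (compactness of $f(X\setminus U)$, the separation of $y$ from it, the inclusion $f^{-1}(V)\subseteq U$) checks out.
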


\begin{lemma}[\cite{KST}]\label{unique.preimage}
Let $X$ be a compact metrizable space and let $f:X\to X$ be
a minimal map. Then the set $\{x\in X:\ \card f^{-1}(x)=1\}$
is a $G_\delta$-dense set in $X$.
\end{lemma}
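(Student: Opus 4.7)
The plan is to handle the two assertions separately: the $G_\delta$ part follows from upper semicontinuity of the fiber map, while density is proved by contradiction using Lemma~\ref{red.open.set}.

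For the $G_\delta$ part, I fix a compatible metric $d$ on $X$ and define $\varphi(x)=\diam f^{-1}(x)$. Since $f$ is minimal on a compact Hausdorff space it is surjective, so every fiber is a nonempty compact set and $\varphi$ is well-defined. Using compactness, the multifunction $x\mapsto f^{-1}(x)$ is upper semicontinuous (if $f^{-1}(x_0)\subseteq V$ with $V$ open, then $x_0$ has a neighborhood disjoint from the compact set $f(X\setminus V)$), and from this I conclude that $\varphi$ itself is upper semicontinuous. Consequently,
\[
\{x\in X : \card f^{-1}(x)=1\} \;=\; \{x\in X : \varphi(x)=0\} \;=\; \bigcap_{n\ge1}\{x\in X : \varphi(x)<1/n\}
\]
is a $G_\delta$ set.

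For density I argue by contradiction. If the set above is not dense, there is a nonempty open $U\subseteq X$ on which $\varphi>0$. Writing $U=\bigcup_{n\ge1}\{x\in U : \varphi(x)\ge 1/n\}$, upper semicontinuity makes each summand relatively closed in $U$, and the Baire category theorem applied to the locally compact Hausdorff space $U$ yields a summand with nonempty interior. Hence there exist a nonempty open $W\subseteq X$ and $\eps>0$ with $\diam f^{-1}(x)\ge \eps$ for every $x\in W$.

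Now I cover $X$ by finitely many open sets $B_1,\dots,B_k$ of diameter strictly less than $\eps/2$, and set $G_i=f^{-1}(W)\cap B_i$. By surjectivity of $f$ the open set $f^{-1}(W)$ is nonempty, so at least one $G_i$ is nonempty. For any $y\in G_i$ the fiber $f^{-1}(f(y))$ has diameter at least $\eps$ and therefore cannot lie inside $B_i$; so it meets $X\setminus G_i$, giving $f(y)\in f(X\setminus G_i)$. Thus $f(G_i)\subseteq f(X\setminus G_i)$, so $G_i$ is a redundant open set, contradicting minimality of $(X,f)$ via Lemma~\ref{red.open.set}. The one delicate point is the Baire-category uniformization step, which upgrades the pointwise positivity of $\varphi$ on $U$ to a uniform lower bound on an open subset; this is precisely what allows the finite cover by small-diameter sets to manufacture the redundant set.
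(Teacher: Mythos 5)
Your proof is correct. Note that the paper offers no proof of this lemma at all --- it is quoted from \cite{KST} --- and your argument (upper semicontinuity of $x\mapsto\diam f^{-1}(x)$ for the $G_\delta$ part, then a Baire-category uniformization to get a uniform lower bound $\eps$ on fiber diameters over an open set $W$, and finally a finite cover by sets of diameter less than $\eps$ to manufacture a redundant open set contradicting Lemma~\ref{red.open.set}) is essentially the argument of the cited source, where minimality is shown to forbid redundant open sets, i.e.\ to force irreducibility, and irreducibility is shown to make singleton fibers residual.
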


Notice that a minimal map $f$ on a compact Hausdorff space $X$ is necessarily
surjective and so $f^{-1}(x)$ is nonempty for every $x\in X$.

Minimal maps preserve several important topological properties of
sets both forward and backward. We will explicitly use the following
result.

\begin{lemma}[\cite{KST}]\label{image.residual}
Let $X$ be a compact Hausdorff space and let $f:X\to X$ be a minimal
map. If $R$ is a residual subset of $X$ then so is $f(R)$.
\end{lemma}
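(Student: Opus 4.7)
The plan is to show that $f$ maps meager sets to meager sets; combined with the surjectivity of a minimal map, this immediately gives that $f$ maps residual sets to residual sets. The key technical step is that $f$ sends closed nowhere dense sets to (closed) nowhere dense sets, and for this I want to use Lemma~\ref{red.open.set} on redundant open sets.

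\textbf{Step 1: closed nowhere dense sets have nowhere dense images.} Let $N\subseteq X$ be closed and nowhere dense. Then $f(N)$ is compact and hence closed, so I only have to show $\Int f(N)=\emptyset$. Suppose towards contradiction that $V:=\Int f(N)\ne\emptyset$. Since minimality forces $f$ to be surjective, $f^{-1}(V)$ is a nonempty open set, and because $N$ is nowhere dense $f^{-1}(V)$ cannot be contained in the closed set $N$. Therefore $G:=f^{-1}(V)\setminus N$ is a nonempty open set, and by construction
\[
f(G)\ \subseteq\ V\ \subseteq\ f(N)\ \subseteq\ f(X\setminus G),
\]
the last inclusion holding because $G$ is disjoint from $N$. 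Thus $G$ is a redundant open set for $f$ and Lemma~\ref{red.open.set} contradicts the minimality of $f$.

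\textbf{Step 2: from nowhere dense to meager.} Any meager set $M$ is contained in a countable union $\bigcup_n\overline{N_n}$ of closed nowhere dense sets, so by Step~1 the image $f(M)\subseteq\bigcup_n f(\overline{N_n})$ is a countable union of closed nowhere dense sets and is therefore meager.

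\textbf{Step 3: conclusion.} Surjectivity of $f$ gives $X=f(R)\cup f(X\setminus R)$, hence $X\setminus f(R)\subseteq f(X\setminus R)$. Since $R$ is residual the set $X\setminus R$ is meager, and Step~2 yields that $f(X\setminus R)$, and thus also $X\setminus f(R)$, is meager. Equivalently $f(R)$ is residual.

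\textbf{Main obstacle.} The heart of the argument is Step~1 — turning a nonempty $\Int f(N)$ into a violation of minimality. The trick is to thin the open set $f^{-1}(V)$ by removing the nowhere dense $N$, which both keeps it open and nonempty and arranges that its image is already covered by $f$ applied to its complement, producing a redundant open set in the sense of Lemma~\ref{red.open.set}.
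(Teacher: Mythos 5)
Your proof is correct, and it is essentially the argument from the cited source: the paper itself gives no proof of this lemma (it is quoted from [KST]), and there the key step is likewise that a minimal map sends closed nowhere dense sets to nowhere dense sets, proved by exhibiting a redundant open set exactly as in your Step~1 (the set $f^{-1}(\Int f(N))\setminus N$). All three steps check out, including the use of surjectivity of minimal maps on compact Hausdorff spaces, which the paper records explicitly after Lemma~\ref{unique.preimage}.
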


If $(X,f)$ is a dynamical system and $x\in X$ then any subset
$\{x_n\,:\,n\geq0\}$ of $X$ satisfying $x_0=x$ and $f(x_{n+1})=x_n$
($n\geq0$) is called a \emph{backward orbit} of $x$ (under $f$).
We will use the following fact (see the proof of \cite[Theorem 2.8]{KST}
or \cite{Mal11};
in fact also the converse is true -- for continuous selfmaps of compact metrizable spaces
the density of all backward orbits implies
minimality).

\begin{lemma}[\cite{KST}]\label{dense.backward}
Let $X$ be a compact metrizable space and let $f:X\to X$ be a continuous
map. If $f$ is minimal then all backward orbits are dense.
\end{lemma}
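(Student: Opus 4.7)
The plan is to attach to each backward orbit $(x_n)_{n\ge 0}$ its $\omega$-limit set
\[
K := \bigcap_{n\ge 0}\overline{\{x_m:\ m\ge n\}},
\]
and to show that $K$ is a nonempty, closed, forward $f$-invariant subset of $X$. Once this is done, the minimality of $f$ forces $K=X$, and since $K\subseteq\overline{\{x_n:\ n\ge 0\}}$, density of the backward orbit follows immediately.

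The first technical step is to check that $K$ is nonempty. Since $X$ is compact metrizable, the sequence $(x_n)$ admits a convergent subsequence; if infinitely many of the $x_n$ are distinct, one can choose this subsequence so that its indices tend to infinity, and if only finitely many distinct values appear then by pigeonhole some value is taken infinitely often, providing such a subsequence as well. Any limit of such a subsequence lies in every $\overline{\{x_m:\ m\ge n\}}$, hence in $K$.

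The main step is forward invariance $f(K)\subseteq K$. Given $z\in K$, I choose $x_{n_k}\to z$ with $n_k\to\infty$. For $n_k\ge 1$ the defining relation $f(x_{n_k})=x_{n_k-1}$ holds, so by continuity of $f$,
\[
f(z)=\lim_{k\to\infty}f(x_{n_k})=\lim_{k\to\infty}x_{n_k-1},
\]
and since $n_k-1\to\infty$ as well, this exhibits $f(z)$ as an $\omega$-limit point of $(x_n)$, i.e.\ $f(z)\in K$. Being nonempty, closed and $f$-invariant, $K$ must equal $X$ by the minimality of $f$, and the lemma follows.

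I do not anticipate any real obstacle here; the argument is essentially an observation once $K$ is singled out as the correct object. The only care needed is the index bookkeeping ensuring that the shifted subsequence $(n_k-1)$ still tends to infinity, which is precisely why $K$ has to be defined as the set of genuine $\omega$-limit points of $(x_n)$ and not merely as the closure $\overline{\{x_n:\ n\ge 0\}}$ of the backward orbit --- otherwise the forward invariance could fail at the single point $f(x_0)$, which need not lie in the closure of $\{x_n:\ n\ge 0\}$.
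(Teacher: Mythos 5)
Your proof is correct. The paper does not prove this lemma itself (it only cites the proof of Theorem~2.8 in [KST]), but your argument --- passing to the limit set $K=\bigcap_{n\ge 0}\overline{\{x_m:\ m\ge n\}}$ of the backward orbit, checking that it is nonempty, closed and forward $f$-invariant, and then invoking minimality --- is precisely the standard argument underlying that reference, and your closing remark that one must use the limit set rather than the closure of the backward orbit (since forward invariance could fail at $f(x_0)$) identifies exactly the one point requiring care.
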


The following result easily follows from \cite[Theorem~3.1]{Ye}.

\begin{lemma}[\cite{Ye}]\label{tot.minimal}
Let $X$ be a compact Hausdorff space and let $f:X\to X$ be a minimal map.
If $X$ is connected then $f$ is totally minimal.
\end{lemma}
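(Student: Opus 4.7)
The plan is to reduce total minimality to a statement about minimal sets of the iterates. Fix $n \geq 1$ and let $M \subseteq X$ be a minimal set of $f^n$; such a set exists by Zorn's lemma applied to the family of nonempty closed $f^n$-invariant subsets of $X$. It suffices to show $M = X$, for then $(X,f^n)$ is itself minimal.

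The first step is to observe that, since $f \circ f^n = f^n \circ f$, the image $f(M)$ is compact, $f^n$-invariant, and in fact a minimal set of $f^n$: any $f^n$-orbit in $f(M)$ is of the form $\{f^n(f(x)) : n \geq 0\} = f(\Orb_{f^n}(x))$ for some $x \in M$, and $\Orb_{f^n}(x)$ is dense in $M$ by minimality, so its image under the continuous map $f$ is dense in $f(M)$. By induction each $f^i(M)$ is a minimal set of $f^n$, and two minimal sets of $f^n$ are either equal or disjoint. So among $M, f(M), \dots, f^{n-1}(M)$ there is a $k \leq n$ such that $M_0 = M, M_1 = f(M), \dots, M_{k-1} = f^{k-1}(M)$ are pairwise disjoint and $f^k(M) = M_0$, and consequently
\[
Y = M_0 \cup M_1 \cup \dots \cup M_{k-1}
\]
is a nonempty closed set satisfying $f(Y) \subseteq Y$. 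By minimality of $f$ on $X$, we conclude $Y = X$.

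Hence $X$ is the disjoint union of the $k$ closed sets $M_0, \dots, M_{k-1}$. In a compact Hausdorff space, the complement of any one of these is a finite union of the others, hence closed, so each $M_i$ is in fact clopen. The connectedness of $X$ then forces $k = 1$, i.e.\ $M = M_0 = X$, which is what we wanted. Since $n \geq 1$ was arbitrary, $(X,f^n)$ is minimal for every $n$, and $f$ is totally minimal.

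There is no real obstacle here; the only small subtlety is being careful that, even though the sets $f^i(M)$ need not all be distinct for $0 \leq i \leq n-1$, the distinct ones do form a genuine partition of $X$ into closed (and therefore clopen) pieces, at which point connectedness finishes the argument.
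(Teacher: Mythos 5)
Your proof is correct. Note that the paper itself gives no argument for this lemma: it simply records it as an easy consequence of Theorem~3.1 of Ye's paper \cite{Ye}, which describes how a compact minimal system $(X,f)$ decomposes under $f^n$ into $k$ (with $k$ dividing $n$) pairwise disjoint clopen minimal sets of $f^n$ that are cyclically permuted by $f$. What you have written is precisely a self-contained proof of that decomposition followed by the one-line observation that connectedness forces $k=1$, so in substance you are reproving the relevant part of Ye's theorem rather than invoking it; that is a perfectly legitimate (and arguably preferable, since elementary) route. The only step you pass over quickly is why the cycle actually closes up, i.e.\ why $f^k(M)=M_0$ for some $k\le n$: this rests on the identity $f^n(M)=M$, which one gets because $f^n(M)$ is a nonempty closed $f^n$-invariant subset of the minimal set $M$, whence the sequence $i\mapsto f^i(M)$ is periodic with period dividing $n$ and the distinct terms of one least period are pairwise disjoint (two minimal sets of $f^n$ being equal or disjoint). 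You flag this subtlety yourself and it is easily supplied, so there is no gap.
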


The following classical result is an immediate consequence
of \cite{Kin} and \cite{Silv} (proved also in \cite{KS}).

\begin{lemma}[\cite{Kin}, \cite{Silv}]\label{intr.pts}
Let $X$ be a compact metrizable space without isolated points and let $f:X\to X$ be a continuous
map.
Then one of the following holds:
\begin{enumerate}
  \item[(1)] $\tr(f)=\emptyset$ and $\intr(f)=X$;
  \item[(2)] $\tr(f)$ is $G_\delta$-dense and $\intr(f)$ is either empty (i.e. the system is minimal) or dense
  (then the system is transitive non-minimal).
\end{enumerate}
\end{lemma}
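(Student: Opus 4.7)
The plan is to describe $\tr(f)$ as a $G_\delta$ set via a countable base, use Baire category together with topological transitivity to obtain $G_\delta$-density in case (2), and exploit the full $f$-invariance of $\tr(f)$ and $\intr(f)$ --- which comes cheaply from the absence of isolated points --- to organize the two cases.

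Fix a countable base $\{U_k\}_{k\in\NNN}$ of the topology of $X$. A point $x$ is transitive if and only if its forward orbit meets every $U_k$, hence
\[
\tr(f) \;=\; \bigcap_{k\in\NNN} G_k, \qquad G_k \;:=\; \bigcup_{n\ge 0} f^{-n}(U_k).
\]
Each $G_k$ is open, so $\tr(f)$ is $G_\delta$ and $\intr(f)$ is $F_\sigma$. Since $X$ has no isolated points, removing a single point from a dense set leaves it dense; combined with $\Orb_f(x)=\{x\}\cup\Orb_f(f(x))$ this yields the two full invariances $f^{-1}(\tr(f))=\tr(f)$ and $f^{-1}(\intr(f))=\intr(f)$.

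If $\tr(f)=\emptyset$ then $\intr(f)=X$ and case (1) holds. Otherwise, given a transitive $x_0$, the same no-isolated-points observation shows that the tail $\{f^n(x_0):n>N\}$ is dense for every $N$. Hence for any nonempty open $U,V\subseteq X$ there exist $m<n$ with $f^m(x_0)\in U$ and $f^n(x_0)\in V$, so $f^{n-m}(U)\cap V\ni f^n(x_0)$ and $f$ is topologically transitive. Consequently each $G_k$ is dense open, and Baire's theorem yields the $G_\delta$-density of $\tr(f)$. Moreover, $\intr(f)=\emptyset$ simply means that every orbit is dense, i.e.\ $f$ is minimal.

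The main obstacle is to show that a nonempty $\intr(f)$ is in fact dense. Setting $A_k:=\bigcap_{n\ge 0} f^{-n}(X\setminus U_k)$ one has $\intr(f)=\bigcup_k A_k$, with each $A_k$ closed and forward $f$-invariant; any open subset of $A_k$ would have all its forward images trapped in $X\setminus U_k$, contradicting transitivity, so each $A_k$ is nowhere dense and $\intr(f)$ is meager. To upgrade ``nonempty meager'' to ``dense'' I would fix a nonempty open $V$ and a transitive $x\in V$, note that $\omega_f(x)=X$ (once more by the absence of isolated points) so that the orbit of $x$ accumulates at every $y\in\intr(f)$, and then combine the backward invariance $f^{-1}(\intr(f))=\intr(f)$ with a compactness argument to pull such a $y$ back to a point of $\intr(f)$ inside $V$. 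This last density step is the delicate one; it is essentially the strategy carried out in \cite{Kin}, \cite{Silv} and \cite{KS}, whose framework I would follow, adapted to the present compact-metrizable setting, to close the argument cleanly.
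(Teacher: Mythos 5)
The paper does not prove Lemma~\ref{intr.pts} at all --- it is quoted from \cite{Kin}, \cite{Silv} (cf.\ \cite{KS}) --- so there is no internal proof to compare against. Most of your argument is fine and standard: the representation $\tr(f)=\bigcap_k\bigcup_n f^{-n}(U_k)$, the full invariance of $\tr(f)$ and $\intr(f)$ coming from the absence of isolated points, the deduction that one transitive point forces topological transitivity, the Baire argument giving $G_\delta$-density in case (2), and the equivalence of $\intr(f)=\emptyset$ with minimality are all correct.

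The genuine gap is the one step you yourself flag as ``delicate'': that a nonempty $\intr(f)$ is dense. Showing $\intr(f)=\bigcup_k A_k$ is meager does not approach this, and the strategy you sketch does not close it. First, it is circular: since $f^{-n}(\intr(f))=\intr(f)$, the statement ``$f^n(V)$ meets $\intr(f)$ for some $n$'' is \emph{identical} to ``$V$ meets $\intr(f)$'', so the backward invariance cannot be used to ``pull a point of $\intr(f)$ back into $V$'' --- it only restates the goal. Second, the accumulation $f^{n_k}(x)\to y$ for a transitive $x\in V$ produces forward images of $x$, which are all \emph{transitive}, near $y$; it produces no preimages of $y$ near $V$, and a given intransitive point need not have any preimages outside its own orbit (e.g.\ a fixed point of an invertible transitive system), so no compactness argument will locate a preimage of that particular $y$ in $V$. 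The true proof requires a different idea --- roughly, one assumes a nonempty open $V\subseteq\tr(f)$, observes that transitivity is then \emph{uniform} on compact sets away from the closed invariant set $E=\{z:\Orb_f(z)\cap V=\emptyset\}$, shows that an orbit must linger for arbitrarily long consecutive stretches near $E$ before approaching it, and derives a contradiction with the existence of three well-separated points in the perfect space $X$. Since this is precisely the content for which the paper leans on \cite{Kin} and \cite{Silv}, deferring it wholesale means the key point of the lemma remains unproved in your write-up.
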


\begin{lemma}[\cite{D32}]\label{irr.rot}
If $f$ is a transitive homeomorphism of a circle then it is conjugate to an
irrational rotation.
\end{lemma}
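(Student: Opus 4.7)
The plan is to use the classical Poincar\'e--Denjoy theory of circle homeomorphisms, showing that transitivity forces $f$ to be orientation-preserving with irrational rotation number and with all orbits dense.

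First I would dispose of the orientation-reversing case. If $f$ reverses orientation then $f^2$ preserves it, and a standard Lefschetz-type argument shows that $f$ has two fixed points $p,q\in\SSS^1$. These fixed points split $\SSS^1$ into two open arcs $A,B$, and since an orientation-reversing homeomorphism cannot preserve each of them (it would otherwise swap the endpoints of $A$), $f$ must exchange $A$ and $B$. Consequently $f^2$ preserves each arc, so no orbit is dense and $f$ cannot be transitive. Hence $f$ is orientation-preserving, and we may introduce its rotation number $\alpha=\rho(f)\in\RRR/\ZZZ$.

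Next I would show that $\alpha$ is irrational. If $\alpha=p/q$ were rational, then $f^q$ has a fixed point, so $f$ has a periodic orbit $O$ of some period $k\mid q$. The complement $\SSS^1\setminus O$ consists of $k$ open arcs cyclically permuted by $f$; in particular $f^k$ maps each such arc $I$ into itself and fixes its endpoints. A homeomorphism of a closed arc fixing both endpoints has orbits converging monotonically to endpoints (or to other fixed points in the interior), so for any $x\in I$ the set $\{f^{kn}(x):n\ge 0\}$ fails to be dense in $\overline I$. But density of the $f$-orbit of $x$ in $\SSS^1$ requires density of $\{f^{kn}(x):n\ge 0\}$ in $\overline I$, contradicting transitivity (which, by Lemma~\ref{intr.pts}, gives a residual set of transitive points, hence such an $x$ exists in $I$).

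With $\alpha$ irrational, Poincar\'e's classification distinguishes two possibilities: either every orbit of $f$ is dense in $\SSS^1$, in which case the standard construction of a semiconjugacy via the Poincar\'e displacement becomes a conjugacy, and $f$ is topologically conjugate to the rotation $R_\alpha$; or there is a unique minimal set $K\subsetneq\SSS^1$ which is a Cantor set, and every orbit has $\omega$-limit $K$. In the second case no point can have dense orbit (points of $K$ stay in $K$, points outside $K$ have $\omega$-limit $K\ne\SSS^1$), contradicting transitivity. Hence only the first case occurs and the lemma follows.

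The main obstacle is invoking Poincar\'e's dichotomy cleanly; but since this is a well-known classical fact (and already credited in the lemma to Denjoy), in the actual write-up I would simply cite the rotation-number construction and the conjugacy theorem, keeping the novel content to the two contradictions (rational rotation number and proper Cantor minimal set) that rule out everything except conjugacy with an irrational rotation.
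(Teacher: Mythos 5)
The paper does not prove this lemma at all: it is stated as a classical fact with a citation to Denjoy \cite{D32}, so there is no in-paper argument to compare yours against. Your reconstruction is the standard Poincar\'e--Denjoy proof and is essentially correct: the orientation-reversing case and the rational-rotation-number case are each excluded because they produce a finite invariant set whose complementary arcs are invariant under a suitable iterate acting as an endpoint-fixing (hence monotone-orbit, non-transitive) interval homeomorphism, and in the irrational case transitivity rules out the exceptional Cantor minimal set, leaving Poincar\'e's conjugacy with the rigid rotation. The only points to make explicit in a final write-up are the passage from ``transitive'' to ``has a dense orbit'' (which you correctly route through Lemma~\ref{intr.pts}, using that the circle is compact without isolated points) and, in the orientation-reversing case, that an orbit alternating between the two complementary arcs of the fixed-point pair cannot be dense because the even iterates form an orbit of an increasing homeomorphism of a closed arc.
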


\section{Regular periodic decompositions for transitive maps}\label{S:rpd}
In this section we study regular periodic decompositions
for transitive maps on spaces with a free interval.
We begin by reviewing some results from \cite{B}.

Let $X$ be a topological space. Recall that a set $D\subseteq
X$ is \emph{regular closed} it it is the closure of its
interior or, equivalently, if it equals the closure of an
open set.

Now let $f:X\to X$ be a continuous map. A \emph{regular
periodic decomposition} (briefly RPD) for $f$ is a finite sequence
$\mathcal D=(D_0,\dots ,D_{m-1})$ of regular closed subsets
of $X$ covering $X$ such that $f(D_i)\subseteq D_{i+1 (\text{mod}\,m)}$ for
$0\leq i\leq m-1$ and $D_i\cap D_j$ is nowhere dense in $X$
for $i\ne j$. The integer
$m$ is called the \emph{length} of $\mathcal D$.
From the latter condition in the definition and from
the fact that the boundary of a regular closed set is nowhere dense,
we get, respectively,
\begin{enumerate}
  \item[(RPD1)] $\Int (D_i)\cap D_j=\emptyset$ for $i\ne j$;
  \item[(RPD2)] the boundary of each $D_i$ is nowhere dense.
\end{enumerate}

Now let
$\mathcal D=(D_0,\dots ,D_{m-1})$ be an RPD for a
\emph{transitive} map $f$. Then, by \cite[Lemma~2.1 and Theorem~2.1]{B},
\begin{enumerate}
  \item[(RPD3)] $\overline{f^l(D_i)}=D_{i+l (\text{mod}\,m)}$
   for $0\leq i\leq m-1$ and $l\ge 0$;
  \item[(RPD4)] $f^{-l}(\Int(D_i))\subseteq \Int\left(D_{i-l (\text{mod}\,m)}\right)$
   for $0\leq i\leq m-1$ and $l\ge 0$;
  \item[(RPD5)] $D=\bigcup_{i\ne j}D_i\cap D_j$
is closed, $f$-invariant and nowhere dense;
  \item[(RPD6)] $f^m$ is transitive on each $D_i$.
\end{enumerate}

If all the sets of $\mathcal{D}$ are connected we say that $\mathcal{D}$ is \emph{connected}.
By (RPD3) and the fact that the continuous image of a connected set is connected we get that
\begin{enumerate}
  \item[(RPD7)] if one of the sets $D_i$ is connected then $\mathcal{D}$ is connected.
\end{enumerate}

\begin{lemma}[\cite{B}, Corollary~2.1]\label{exist.RPD}
Let $X$ be a topological space and $f$ be
a transitive map on $X$ with $f^n$ non-transitive for some
$n\geq 2$. Then $f$ has a regular periodic decomposition
of length dividing $n$.
\end{lemma}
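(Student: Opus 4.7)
The strategy is to build the decomposition directly from open sets witnessing the failure of transitivity of $f^n$, and then to identify the correct length $m$ as a divisor of $n$. Since $f^n$ is not transitive I would fix nonempty open sets $U,V\subseteq X$ with $f^{nk}(U)\cap V=\emptyset$ for every $k\ge 0$, and for $i=0,1,\ldots,n-1$ introduce
\[
O_i \;:=\; \bigcup_{k\ge 0} f^{-(nk+i)}(V) \;=\; f^{-i}(O_0).
\]
These are open, their union $\bigcup_{i=0}^{n-1}O_i=\bigcup_{j\ge 0}f^{-j}(V)$ is dense in $X$ by the transitivity of $f$, and $U\cap O_0=\emptyset$ forces each $O_i$ to be a proper subset of $X$. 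A direct computation gives $f^{-1}(O_i)=O_{i+1}$ for $0\le i\le n-2$ and $f^{-1}(O_{n-1})\subseteq O_0$, which in forward form says $f(O_j)\subseteq O_{j-1}$ for $j\ge 1$.

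Setting $D_i:=\overline{O_i}$ produces regular closed sets (the closure of any open set is regular closed) that cover $X$ and are cyclically permuted by $f$, in the sense $f(D_j)\subseteq D_{j-1\bmod n}$ for all $j$; the wrap-around inclusion $f(D_0)\subseteq D_{n-1}$ requires a small extra argument, absorbing the possibly stray contribution of $f(V)$. I would then identify the true length $m$ of the decomposition as the smallest positive integer with $D_i=D_{i+m\bmod n}$ for every $i$. Since the set of shifts preserving the indexed family is a subgroup of $\mathbb Z/n\mathbb Z$, necessarily $m\mid n$. Choosing distinct representatives and, if needed, reversing the orientation of the cycle (replacing $D_i$ by $D_{m-i\bmod m}$), yields the candidate sequence $(D_0,\ldots,D_{m-1})$ with $f(D_i)\subseteq D_{i+1\bmod m}$.

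The principal obstacle is the remaining RPD axiom: that $D_i\cap D_j$ is nowhere dense whenever $i\not\equiv j\pmod m$. Suppose, for contradiction, that a nonempty open $W$ lies inside some $D_i\cap D_j$. Then both $W\cap O_i$ and $W\cap O_j$ are nonempty, so some open subset of $W$ meets $V$ under $f$-iterates at times in two distinct residue classes modulo $m$; propagating these hits through the preimage relation $f^{-1}(O_r)=O_{r+1\bmod n}$ and using the transitivity of $f$ collapses two of the $O_r$'s, forcing $D_i=D_j$ and contradicting the minimality of $m$. Carrying out this chasing-through-residue-classes argument rigorously in the generality of an arbitrary topological space — with no Baire-category, compactness, or metrizability assumption — is the technical heart of Banks's Corollary~2.1; granted it, the remaining axioms of a regular periodic decomposition hold immediately from the construction, and the resulting sequence $(D_0,\ldots,D_{m-1})$ is the sought RPD of length dividing $n$.
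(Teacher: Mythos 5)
First, a point of order: the paper offers no proof of this lemma at all --- it is imported verbatim from Banks \cite{B} (his Corollary~2.1) --- so there is no in-paper argument to compare yours against, and I can only judge your proposal on its own terms. Your construction is the natural one and is in the spirit of Banks's approach: take $U,V$ witnessing the non-transitivity of $f^n$, form $O_i=\bigcup_{k\ge 0}f^{-(nk+i)}(V)$ and $D_i=\overline{O_i}$. The routine parts are fine: the $O_i$ are open, $\bigcup_{i}O_i=\bigcup_{j\ge 0}f^{-j}(V)$ is dense (so the finitely many closures $D_i$ cover $X$), $f^{-1}(O_i)=O_{i+1}$ for $i\le n-2$, $D_0\ne X$ because the open set $U$ is disjoint from $O_0$, and the set of shifts fixing the indexed family $(D_i)$ is a subgroup of $\mathbb{Z}/n\mathbb{Z}$, so its minimal positive element $m$ divides $n$.

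However, what you have written is an outline rather than a proof: both places where the real work happens are explicitly deferred. (a) The wrap-around inclusion $f(D_0)\subseteq D_{n-1}$ reduces, via $O_0=V\cup f^{-n}(O_0)$, to showing $f(V)\subseteq\overline{O_{n-1}}$, i.e.\ essentially that the points of $V$ which return to $V$ at times in $n\mathbb{Z}$ are dense in $V$. Transitivity of $f$ only gives returns at \emph{some} positive times, and $f^n$ is by hypothesis \emph{not} transitive, so there is no soft reason for returns at multiples of $n$; this is not a ``small extra argument'' but has the same depth as the main step. (b) The nowhere-density of $D_i\cap D_j$ for $i\not\equiv j\pmod m$. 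Your first move is correct and can be sharpened: if a nonempty open $W$ lies in $D_i\cap D_j$, then $O_i$ and $O_j$ are dense \emph{open} subsets of $W$, hence they meet, which yields $V\cap f^{-d}(V)\ne\emptyset$ for some $d\equiv j-i\pmod n$. But the claimed conclusion that this ``collapses'' $O_i$ and $O_j$ amounts to proving that the relation $i\sim j$ iff $D_i\cap D_j$ has nonempty interior is a shift-invariant \emph{equivalence} relation; its transitivity is not automatic (the part of $O_j$ met by $O_i$ need not be the part met by $O_k$), and establishing it in an arbitrary topological space --- with no Baire category, compactness, or transitive points available --- is precisely the content of Banks's argument. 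Since you grant yourself both (a) and (b) (``granted it, \dots''), the proposal does not yet establish the lemma.
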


Assume now that $\mathcal D=(D_0,\dots ,D_{m-1})$ and
$\mathcal C=(C_0,\dots ,C_{n-1})$ are RPD's for $f$.
We say that $\mathcal C$ \emph{refines}  (or is a \emph{refinement} of)
$\mathcal D$ if
every $C_i$ is contained in some $D_j$.
Then each element of $\DDd$ contains the same number of elements of $\mathcal C$,
so $n$ is a multiple of $m$.

The following is a slight generalization of \cite[Theorem~6.1]{B}; it will be used in the
proof of Lemma~\ref{finite.DI}.

\begin{lemma}\label{con.refin}
Let $X$ be a topological space containing a
nonempty open locally connected subset.
Let $f$ be a transitive map on $X$. Then every regular
periodic decomposition for $f$ has a connected refinement.
\end{lemma}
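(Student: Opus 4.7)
The plan is to mimic the locally-connected case \cite[Theorem~6.1]{B} by carving out an open connected seed inside the prescribed locally connected subset $U$ and iterating it forward under $f$ to produce the refinement. The given RPD is $\mathcal D=(D_0,\dots,D_{m-1})$, and the goal is to produce a connected RPD $(C_0,\dots,C_{n-1})$ with $n$ a multiple of $m$ and each $C_j$ contained in some $D_i$.

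First I would reduce $U$ to a convenient form. Since the complement of $\bigcup_{i=0}^{m-1}\Int(D_i)$ is contained in the nowhere dense exceptional set from (RPD5), $U$ must intersect $\Int(D_i)$ for some $i$, which by cyclic relabeling one may take to be $0$. The intersection $U\cap\Int(D_0)$ is open and locally connected, so its connected components are open; pick any one to obtain a nonempty open connected set $V\subseteq\Int(D_0)$. Set $g=f^m|_{D_0}$; by (RPD6), $g$ is transitive on $D_0$, and applying transitivity to the pair $(V,V)$ produces a smallest integer $n'\ge 1$ with $g^{n'}(V)\cap V\ne\emptyset$. Setting $W=\bigcup_{k\ge 0}g^{kn'}(V)$, the nonempty intersection $V\cap g^{n'}(V)$ propagates under $g^{n'}$ to give $g^{kn'}(V)\cap g^{(k+1)n'}(V)\ne\emptyset$ for every $k\ge 0$; therefore $W$ is a chain of overlapping connected sets, so $W$ is connected, and obviously $g^{n'}(W)\subseteq W$.

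With $n=n'm$, I would then define the candidate refinement $C_j=\overline{f^j(W)}$ for $0\le j<n$. Each $C_j$ is closed and connected, and $C_j\subseteq D_{j\bmod m}$ (since $W\subseteq D_0$ and $D_{j\bmod m}$ is closed), so the index map $j\mapsto j\bmod m$ witnesses that $(C_0,\dots,C_{n-1})$ refines $\mathcal D$. The shift relation $f(C_j)\subseteq C_{(j+1)\bmod n}$ is direct, with $f^n(W)=g^{n'}(W)\subseteq W$ taking care of the wrap-around, and density of $\bigcup_{l\ge 0}f^l(V)$ in $X$ (from transitivity of $f$) yields $\bigcup_j C_j=X$.

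The hard part will be to check the remaining two RPD axioms: that each $C_j$ is regular closed, and that $C_i\cap C_j$ is nowhere dense whenever $i\ne j$. When $i\not\equiv j\pmod m$ the latter is immediate because $C_i\cap C_j\subseteq D_{i\bmod m}\cap D_{j\bmod m}$, already nowhere dense by (RPD5). What remains --- and is the main obstacle --- is the case $i\equiv j\pmod m$ inside $D_0$, together with the regular-closed property for every $C_j$. Both should be extracted from the minimality of $n'$: for $1\le k<n'$ the iterate $g^{km}(V)$ misses $V$, which localizes the interior of each $C_{km}$ near its ``own'' branch of the $g^{n'}$-orbit of $V$ and forces the pairwise intersections to lie in the nowhere dense boundary part of $D_0$. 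The regular-closed property would then follow by propagating openness from $V$ through $f$ along the orbit, using the local connectedness of $U$ to replace, if necessary, each $f^j(V)$ by a nonempty open connected piece of $\Int(D_{j\bmod m})$ to which $f^j(V)$ accumulates, thereby ensuring $\Int(C_j)$ is dense in $C_j$.
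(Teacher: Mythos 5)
Your construction collapses at precisely the point you flag as ``the hard part'', and the problem is not a missing verification but a false claim: the first return time $n'$ of $V$ to itself has nothing to do with the decomposition structure of the system, so the sets $C_j=\overline{f^j(W)}$ need not form an RPD at all. Concretely, take $X=[0,1]$, $f$ the tent map and $\mathcal D=(X)$ the trivial decomposition of length $m=1$. For $V=(0.1,0.2)$ one computes $f(V)=(0.2,0.4)$, $f^2(V)=(0.4,0.8)$, $f^3(V)=(0.4,1]$, $f^4(V)=[0,1]$, so $n'=4$; but the tent map is totally transitive and hence admits no RPD of length $\geq 2$, so your $(C_0,\dots,C_3)$ cannot be one. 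The failure is visible directly: $f^{n'}(V)$ is already the whole space, so $W\supseteq f^{n'}(V)$ forces every $C_j$ to equal $[0,1]$ and the intersections $C_i\cap C_j$ are far from nowhere dense. Minimality of $n'$ only controls the small set $V$; it says nothing about the much larger sets $g^{kn'}(V)$ that you swallow into $W$, and it is exactly those sets that re-enter $V$ at the wrong times. So the heuristic in your last paragraph cannot be made to work.

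The correct invariant is not the return time of an open seed but the period of the cycle of \emph{connected components}. The paper's proof takes $C_0$ to be the connected component of $D_0$ whose interior meets the locally connected set (local connectedness is used exactly to guarantee such a component has nonempty interior), observes that transitivity of $f^m|_{D_0}$ forces $D_0$ to split into finitely many components cyclically permuted by $f^m$ and forming an RPD for $f^m|_{D_0}$, and then applies Banks's Lemma~3.3 to $G=\Int(C_0)$ to obtain the RPD $\{\overline{G},\overline{f^{-(km-1)}(G)},\dots,\overline{f^{-1}(G)}\}$ (with $k$ the number of components), which refines $\mathcal D$ by (RPD4) and is connected by (RPD7) because $\overline{G}=C_0$ is. Your opening step --- producing an open connected $V\subseteq\Int(D_0)$ --- is sound and parallels the paper, but you must enlarge $V$ to the full component of $D_0$ containing it before counting a period; counting returns of $V$ itself gives the wrong length, and no amount of work on the remaining axioms can repair that.
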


\begin{proof}
Let $J\ne\emptyset$ be an open locally connected subset of $X$ and
let $\mathcal{D}=\{D_0,\dots,D_{n-1}\}$ be a regular periodic decomposition
for $f$. By (RPD2)
the union of interiors of $D_i$ is dense, so at least one of them
--- say $\Int{D_0}$ --- intersects $J$.
Let $C_0$ be a
connected component of $D_0$ the interior of which intersects $J$
(use that $J$ is locally connected).
By (RPD6), $f^n|_{D_0}: D_0\to D_0$ is transitive. It follows, since $C_0$ is a component
of $D_0$ with nonempty interior, that the set $D_0$ has finitely many components
$C_0,C_1,\dots,C_{m-1}$ ($m\ge 1$) which are permuted by $f^n$ and
$\mathcal{C}=\{C_0,C_1,\dots,C_{m-1}\}$ is a regular periodic decomposition for
$f^n|_{D_0}$.
Let $G =\Int(C_0)$ (interior in $X$, not in $D_0$).
By \cite[Lemma~3.3]{B} we obtain that
$\mathcal{E}=\{\overline{G}, \overline{f^{-(mn-1)}(G)}, \dots, \overline{f^{-(mn-2)}(G)}, \overline{f^{-1}(G)}\}$
is an RPD for $f$. By (RPD4) it is a refinement of $\mathcal D$. Since $\overline{G}=C_0$ is connected,  by (RPD7) we get that
$\mathcal{E}$ is connected.
\end{proof}

Given a transitive map $f$, by \cite{B} the set of all $m\in\NNN$
such that $f$ admits an RPD of length $m$ is called
the \emph{decomposition ideal} of $f$
(the use of the term ``ideal'' is justified by the fact that this set is an ideal
in the lattice of positive integers ordered by divisibility).
If the decomposition ideal of $f$ is finite then there is
an RPD of $f$ of maximal length.
Such an RPD (which
is by \cite[Theorem~2.2]{B} unique up to cyclic permutations of its elements)
is called a \emph{terminal decomposition} of $f$.

\begin{lemma}[\cite{B}, Theorem~3.1]\label{termin.RPD}
Let $X$ be a topological space and $f$ be
a transitive map on $X$. Assume that $\mathcal D=(D_0,
\dots ,D_{m-1})$ is a regular periodic decomposition
for $f$. Then $\mathcal D$ is terminal if and only
if $f^m|_{D_i}$ is totally transitive for $0\leq i
\leq m-1$.
\end{lemma}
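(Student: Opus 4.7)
My plan is to prove the two implications separately, with the key mechanism being a passage between RPDs of $f$ on $X$ of length $ms$ and RPDs of $f^m|_{D_0}$ of length $s$. The failure of $\mathcal D$ to be terminal should correspond (via the ideal structure of the decomposition lengths together with the Banks uniqueness of RPDs of a given length up to cyclic shift) to the existence of a nontrivial RPD of $f^m|_{D_0}$, which by (RPD6) and Lemma~\ref{exist.RPD} is equivalent to the failure of total transitivity of $f^m|_{D_0}$.

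For the direction ``each $f^m|_{D_i}$ totally transitive $\Rightarrow$ $\mathcal D$ terminal'', I argue by contradiction. Suppose $f$ admits an RPD $\mathcal E$ of length $n>m$. Since the decomposition ideal is closed under least common multiples, I may replace $\mathcal E$ by an RPD of length $mk$ with $k\ge 2$; and by \cite[Theorem~2.2]{B}, which gives uniqueness of RPDs of a given length up to cyclic shift, I may assume $\mathcal E$ refines $\mathcal D$ (after cyclically relabeling $\mathcal D$ if necessary). Exactly $k$ elements of $\mathcal E$ then lie in $D_0$, and they are cyclically permuted by $f^m$, hence form an RPD of length $k\ge 2$ for $f^m|_{D_0}$. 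One of these elements is a proper regular closed subset of $D_0$ with nonempty interior that is invariant under $f^{mk}$, so $f^{mk}|_{D_0}$ is not transitive---contradicting total transitivity.

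For the converse, ``$\mathcal D$ terminal $\Rightarrow$ each $f^m|_{D_i}$ totally transitive'', I again argue by contradiction. Since the statement is cyclically symmetric in the index $i$, it suffices to treat $D_0$. By (RPD6), $f^m|_{D_0}$ is transitive; if it fails to be totally transitive, Lemma~\ref{exist.RPD} yields an RPD $(C_0,\dots,C_{s-1})$ of $f^m|_{D_0}$ with $s\ge 2$. Setting $G=\Int(C_0)$, a nonempty open subset of $X$, and applying \cite[Lemma~3.3]{B} exactly as in the proof of Lemma~\ref{con.refin}, I obtain that $\bigl\{\,\overline{f^{-j}(G)}:j=0,1,\dots,ms-1\bigr\}$ is an RPD of $f$ on $X$ of length $ms>m$, contradicting the terminality of $\mathcal D$.

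The main obstacle I anticipate is the lifting step in the converse: one must verify that the sets $\overline{f^{-j}(G)}$ form an RPD of length exactly $ms$ (and not of some proper divisor), which requires that the preimages $f^{-j}(G)$ visit $ms$ distinct slots before returning to themselves. Property (RPD4) of $\mathcal D$ controls which $D_i$ contains each $\overline{f^{-j}(G)}$, while the nontriviality of $(C_0,\dots,C_{s-1})$ ensures that within a fixed $D_i$ the sets cycle through $s$ genuinely distinct pieces. This combinatorics is the content of \cite[Lemma~3.3]{B}; the remaining RPD axioms (regular closedness, covering of $X$, and nowhere density of pairwise intersections) then follow by routine verification.
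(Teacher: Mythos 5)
The paper does not prove this lemma: it is quoted verbatim from Banks (\cite{B}, Theorem~3.1), so there is no in-paper proof to compare against. Judged on its own, your sketch is a correct reconstruction of the natural (and, as far as I can tell, essentially Banks's) argument: the forward direction extracts from a longer RPD of $f$ an $f^{mk}$-invariant proper regular closed subset of $D_0$ with nonempty interior, killing transitivity of $(f^m|_{D_0})^k$; the converse lifts an RPD of $f^m|_{D_0}$ of length $s\ge 2$ to one of $f$ of length $ms$ via the $\overline{f^{-j}(G)}$ construction, exactly as the paper itself does in the proof of Lemma~\ref{con.refin}. Two places deserve to be made explicit. First, in the forward direction, uniqueness of RPDs of a given length (\cite[Theorem~2.2]{B}) does not by itself let you assume $\mathcal E$ refines $\mathcal D$; you also need the existence of a common refinement of length $\lcm(m,mk)=mk$ (Banks's ideal/common-refinement machinery), after which uniqueness identifies that refinement with a cyclic shift of $\mathcal E$. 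Your phrase ``the ideal structure of the decomposition lengths'' gestures at this, but the refinement claim is where the real content sits. Second, note that for your contradiction you do not actually need the $k$ traces of $\mathcal E$ on $D_0$ to form a full RPD of $f^m|_{D_0}$ (covering $D_0$ is the only nontrivial axiom there); the single invariant set $E_0$, proper because $E_0\cap E_m$ is nowhere dense while $E_m$ has nonempty interior, already suffices — you in fact use exactly this, so the ``hence form an RPD'' clause can be dropped or proved separately. With those points tightened, and granting \cite[Lemma~3.3]{B} for the lifting step (which the paper also grants), the argument is sound.
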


Now we study regular periodic decompositions for
transitive maps on spaces with a free interval.

\begin{lemma}\label{finite.DI}
Let $X$ be a topological space with a free interval
$J$ and $f$ be a transitive map on $X$. Assume that
$f$ has a periodic point $x$ in $J$ with period $p$.
Then every regular periodic decomposition $\mathcal D$ for $f$
has length at most $2p$. In particular, $f$ has a terminal RPD.
\end{lemma}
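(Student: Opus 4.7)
The plan is to prove $m\le 2p$ for every RPD $\mathcal{D}=(D_0,\dots,D_{m-1})$; the existence of a terminal RPD then follows because the decomposition ideal will be a divisibility-closed subset of $\{1,\dots,2p\}$, hence finite, and will contain its maximum. Since $J$ is open and (being homeomorphic to an open interval) locally connected, Lemma~\ref{con.refin} yields a connected refinement $\mathcal{E}=(E_0,\dots,E_{k-1})$ of $\mathcal{D}$ with $k$ a positive multiple of $m$. Because $m\le k$, it suffices to show $k\le 2p$.

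Set $A:=\{i\in\mathbb{Z}/k\mathbb{Z}:x\in E_i\}$. From $f(E_i)\subseteq E_{i+1\bmod k}$ iterated $p$ times, together with $f^p(x)=x$, one gets $A+p\subseteq A$ in $\mathbb{Z}/k\mathbb{Z}$, hence $A+p=A$ by cardinality. Thus $A$ is a union of orbits of the shift-by-$p$ on $\mathbb{Z}/k\mathbb{Z}$, each of size $k/\gcd(k,p)$, so $k/\gcd(k,p)$ divides $|A|$. Granted the key claim $|A|\le 2$, we conclude $k/\gcd(k,p)\le 2$, whence $k\le 2\gcd(k,p)\le 2p$, and therefore $m\le k\le 2p$, as desired.

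To prove the key claim, suppose for contradiction that $x\in E_a\cap E_b\cap E_c$ with $a,b,c$ pairwise distinct. By (RPD1) one has $\Int E_j\cap E_i=\emptyset$ for $i\ne j$, so $x\notin\Int E_i$ for any $i\in A$ (else we would have $|A|=1$); hence $x\in\partial E_i$. Since $E_i=\overline{\Int E_i}$ and $J$ is a neighborhood of $x$, each of the pairwise disjoint open sets $\Int E_a\cap J,\ \Int E_b\cap J,\ \Int E_c\cap J$ accumulates at $x$. But in the free interval $J$ the point $x$ has only two sides, so by the pigeonhole principle two of the three, say $\Int E_a$ and $\Int E_b$, accumulate at $x$ from the same side, say the right in the order $\prec$. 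Pick $y\in\Int E_a$ on that side, arbitrarily close to $x$. The crucial step is to use the connectedness of $E_a$ in $X$, together with the one-dimensional local structure of $J$ at $x$, to produce a closed subinterval $[x,y]\subseteq J$ entirely contained in $E_a$. Once this is achieved, $\Int E_b\cap(x,y)$ is a nonempty open subset of $\Int E_b\cap E_a$, which directly contradicts (RPD1).

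The main obstacle is precisely the extraction of the interval $[x,y]\subseteq E_a$: a priori a connected subset of $E_a$ joining $x$ to $y$ might exit $J$ and re-enter it, bypassing the gap. Ruling this out must combine the connectedness of $E_a$, the density of $\Int E_i\cap J$ in $E_i\cap J$ (so that $(E_i\cap J)=\overline{\Int E_i\cap J}^J$), and the nowhere-denseness of the boundaries $\partial E_i$ in $J$, together with the rigidity imposed by $(E_0,\dots,E_{k-1})$ being a connected RPD; this is the technical heart of the lemma. Once $|A|\le 2$, and hence $m\le 2p$, have been established, the decomposition ideal is finite and the maximum element in the divisibility order provides the desired terminal RPD.
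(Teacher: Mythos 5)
Your strategy is genuinely different from the paper's in its second half, and the parts you do prove are correct: passing to a connected refinement via Lemma~\ref{con.refin}, the invariance $A+p=A$ of $A=\{i:\ x\in E_i\}$ under the shift by $p$, the conclusion that $k/\gcd(k,p)$ divides $\card A$, the arithmetic $\card A\le 2\Rightarrow k\le 2\gcd(k,p)\le 2p$, and the derivation of a terminal RPD from the finiteness of the decomposition ideal. Where the paper, after the same reduction to a connected RPD, splits into the cases $x\in\Int(D_i)$ and $x\in D_i\cap D_j$ and then argues dynamically (transitivity makes $f^p(L)$ and $f^p(R)$ non-degenerate for the left and right germs $L,R$ at $x$, and tracking whether $f^p$ preserves or swaps the sides gives $m\mid p$ or $m\mid 2p$ via (RPD1) and (RPD4)), you replace the dynamics by a purely combinatorial count of how many members of the decomposition can contain $x$. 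That is a legitimate and arguably cleaner route.

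However, the proof is not complete: the key claim $\card A\le 2$ is exactly where you stop. The step you yourself flag as ``the technical heart'' --- producing an interval $[x,y]\subseteq E_a$ --- is left unproved, and listing ingredients that ``must combine'' is not an argument. The missing step is short and is the same separation trick the paper uses in Lemma~\ref{perpt.freeint} and in assertion (A) of Lemma~\ref{weak-general}. Namely, suppose a connected $E_i\ni x$ misses two points $z^-\prec x\prec z^+$ of $J$. Then $(z^-,z^+)$ and $X\setminus[z^-,z^+]$ are disjoint open sets (the arc $[z^-,z^+]$ is compact, hence closed in the Hausdorff space $X$) whose union contains $E_i$; since $E_i$ is connected and meets $(z^-,z^+)$ at $x$, we get $E_i\subseteq(z^-,z^+)$, so $E_i$ is an interval of $J$ containing $x$, non-degenerate because $E_i=\overline{\Int E_i}\ne\{x\}$. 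In either case every $E_i$ with $i\in A$ contains a one-sided interval $(a,x]$ or $[x,b)$. If two of them, $E_i$ and $E_j$ with $i\ne j$, contained a one-sided interval on the same side, then $\Int E_i\cap E_j$ would contain a nonempty open subinterval, contradicting (RPD1); hence $\card A\le 2$. Note that this also corrects your route: you should not try to join $x$ to a prescribed point $y\in\Int E_a$ (that indeed cannot be forced), only to establish the dichotomy ``one-sided interval on the left or on the right'' for each member of $A$. With this paragraph inserted, your proof is correct.
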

\begin{proof}
Fix an RPD $\mathcal D=(D_0,\dots ,D_{m-1})$ for $f$.
By Lemma~\ref{con.refin} we may assume that $\mathcal D$ is
connected. Two cases are possible:
\begin{enumerate}
 \item[(1)] $x\in \Int(D_i)$ for some $i$;
 \item[(2)] $x\in D_i\cap D_j$ for some $i\ne j$.
\end{enumerate}
In the case (1), the periodicity of $x$ and (RPD4) give
$$
x\in f^{-p}(\Int(D_i))\subseteq
\Int(D_{i-p\,(\text{mod}\,m)}).
$$
Thus $\Int(D_i)\cap \Int(D_{i-p\,(\text{mod}\,m)})\neq \emptyset$
and so $m|p$ by (RPD1). Consequently $m\leq p\leq 2p$.

In the case (2), since $x\in J$ and $D_i$, $D_j$ are connected,
we may choose the ordering in $J$ in such a way that there are
$a\prec x\prec b$ in $J$ with $(a,x]\subseteq D_i$ and
$[x,b)\subseteq D_j$.
Put $L=(a,x)$ and $R=(x,b)$.
Since $f$ is transitive neither $f^p(L)$ nor $f^p(R)$ is a singleton.
However, $f^p(x)=x$ and so
$f^p(z)\in (a,b)$ for all $z$ sufficiently
close to $x$. Hence
\begin{equation}\label{EQ:LR}
 f^p(L)\cap (L\cup R)\neq
 \emptyset
 \qquad\text{and}\qquad
 f^p(R)\cap (L\cup R)\neq \emptyset.
\end{equation}

If $f^p(L)\cap L\neq \emptyset$ then we have
$$
\emptyset \neq \Int(D_i)\cap f^p(D_i)\subseteq \Int(D_i)\cap
D_{i+p\,(\text{mod}\,m)},
$$
whence $m|p$ by (RPD1) and so $m\leq p\leq 2p$. The same inequality $m\le p$ is obtained if $f^p(R)\cap R \ne \emptyset$.
It remains to consider the situation when $f^p(L)\cap L=\emptyset=f^p(R)\cap R$,
 $f^p(L)\cap R \ne\emptyset$ and $f^p(R)\cap L \ne\emptyset$.
 Hence, since $f^p(x)=x$, there are $a\prec a'\prec x\prec b'\prec b$ such that
 for the non-degenerate sets $f^p(L)$ and $f^p(R)$ we have
 $f^p(L)\supseteq R':=(x,b')$
 and $f^p(R)\supseteq L':=(a',x)$.
 Since (\ref{EQ:LR}) obviously holds for $L',R'$ instead of $L,R$ we get that
 $f^{2p}(L)\cap L\ne\emptyset$. Analogously as we obtained above $m|p$ when $f^p(L)\cap L\ne\emptyset$
 now we get $m|2p$
and so $m\leq 2p$.
\end{proof}

\section{Periodic-recurrent property}\label{S:periodic-recurrent}

In a system $(X,f)$ always $\Per(f)\subseteq \Rec(f)$.
The sets $\Per(f)$ and $\Rec(f)$ need not be closed.
When $\overline{\Per(f)} = \overline{\Rec(f)}$
for every continuous map $f$ on $X$, we speak on the \emph{periodic-recurrent property}
of the space $X$. Some one-dimensional spaces do have this property.
In \cite{I} dendrites with periodic-recurrent property have been characterized.
For the history of the investigation of this property see \cite{I} and \cite{MS}.
A space with a disconnecting interval $J$ need not be one-dimensional
but the periodic-recurrent property, relatively in $J$, still holds.

\begin{theorem}\label{per.rec.pts}
Let $X$ be a connected topological space with a disconnecting interval $J$
and let $f:X\to X$ be a continuous map. Then
$$
\overline{\Rec(f)}\cap J = \overline{\Per(f)}\cap J.
$$
\end{theorem}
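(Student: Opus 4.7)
The inclusion $\overline{\Per(f)}\cap J\subseteq \overline{\Rec(f)}\cap J$ is immediate since $\Per(f)\subseteq\Rec(f)$. For the reverse inclusion, the plan is to fix $x\in\overline{\Rec(f)}\cap J$ and to prove that every open subinterval $W\subseteq J$ with $x\in W$ meets $\Per(f)$; this suffices because such subintervals form a neighbourhood base of $x$ in $X$. The only tool I expect to need is Lemma~\ref{exist.per.pt}: once one exhibits two points of $J$ together with iterates of $f$ that push them in opposite directions inside $J$, the lemma supplies a periodic point in their convex hull.

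First I would pick a recurrent point $y\in W$, which exists because $x\in\overline{\Rec(f)}$ and $W$ is open. If $y\in\Per(f)$ there is nothing more to do, so assume $y\in\Rec(f)\setminus\Per(f)$ and choose an open subinterval $V=(c,d)\subseteq W$ with $c\prec y\prec d$. By recurrence there are arbitrarily large $n$ with $f^n(y)\in V\setminus\{y\}$. If returns arrive on both sides of $y$, one finds $n_1,n_2\geq 1$ with $f^{n_1}(y)\prec y\prec f^{n_2}(y)$ and both iterates in $V$; Lemma~\ref{exist.per.pt} applied with the single base point $y$ then yields a periodic point inside the convex hull $[f^{n_1}(y),f^{n_2}(y)]\subseteq V\subseteq W$, and we are done.

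The only delicate point, which I expect to be the main obstacle, is the one-sided case, where a single base point cannot supply both hypotheses of Lemma~\ref{exist.per.pt}. Say every return of $y$ to $V$ lies strictly to the left of $y$. Pick a return $z=f^{n_1}(y)\in V$ with $z\prec y$, shrink the neighbourhood to some $V'=(c',d')\subseteq V$ with $z\prec c'\prec y\prec d'\prec d$, and select a later return $f^k(y)\in V'$ with $k>n_1$. One-sidedness forces $f^k(y)\prec y$, while $f^k(y)\in V'$ forces $f^k(y)\succ c'\succ z$. Setting $m=k-n_1\geq 1$ we obtain $f^m(z)=f^k(y)\succ z$ with $f^m(z)\in J$, so Lemma~\ref{exist.per.pt} applied to the pair $(y,z)$ with exponents $(n_1,m)$ produces a periodic point inside the convex hull of $\{y,z,f^{n_1}(y),f^m(z)\}\subseteq[z,y]\subseteq V\subseteq W$. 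The symmetric situation in which all returns of $y$ lie to the right of $y$ is handled identically, which completes the argument.
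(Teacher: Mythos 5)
Your proof is correct and follows essentially the same route as the paper: reduce to showing that periodic points accumulate at any recurrent point of $J$, then extract two return times in an opposite/nested configuration and feed them to Lemma~\ref{exist.per.pt}. The paper asserts the nested configuration $r\prec f^{m+n}(r)\prec f^m(r)$ (or its mirror) directly, while you split into two-sided and one-sided returns; your one-sided case is exactly the paper's argument and your two-sided case is a harmless simplification.
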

\begin{proof}
Only one inclusion needs a proof.
Moreover, it is sufficient to show that $\Rec(f)\cap J \subseteq \overline{\Per(f)}\cap J$
(it is elementary to check that then also $\overline{\Rec(f)}\cap J \subseteq \overline{\Per(f)}\cap J$).
So fix a recurrent point $r\in J$ and consider any open interval $J'$ with
$r\in J'\subseteq J$. We show that $\Per(f)\cap J'\neq \emptyset$. If $r$ itself
is periodic then there is nothing to prove. So assume that $r\notin \Per(f)$.
There are positive integers $n,m$ such that $f^m(r), f^{m+n}(r)\in J'$
and either $r\prec f^{m+n}(r)\prec f^m(r)$ or $f^m(r)\prec f^{m+n}(r)\prec r$.
Without loss of generality we may assume that the first possibility holds.
We thus have $f^m(r)\succ r$ and $f^n(f^m(r))\prec f^m(r)$. By
Lemma~\ref{exist.per.pt}, $f$ has a periodic point in $J'$.
\end{proof}

If $U$ is an open set in a topological space $X$ and $A$ is any set in $X$
then $\overline{\overline{A}\cap U}=\overline{{A}\cap U}$.
Therefore it follows from our theorem that
$$
\overline{\Rec(f)\cap J} = \overline{\Per(f)\cap J}
$$
which is the equality of two sets which are not necessarily subsets of $J$.
One can see that both equalities are equivalent.

In general,
Theorem~\ref{per.rec.pts} is no longer valid if $J$ is assumed to be a free interval
rather than a disconnecting one (consider an irrational rotation of the circle).
One can deduce from \cite{Blo} that for graph maps
a weaker form of the periodic-recurrent property holds, namely
$\overline{\Rec(f)}=\Rec(f) \cup \overline{\Per(f)}$.
Recently Mai and Shao  gave in \cite{MS} a
different proof of this fact. Their idea
can be used to show that such an equality holds
(relatively) in every free interval, see Theorem~\ref{per.rec.pts2}. Though the proof
is pretty similar to that from \cite{MS},
we include it into the appendix because
the result is crucial for Lemma~\ref{perpt.freeint}.

\begin{theorem}\label{per.rec.pts2}
Let $X$ be a topological space with a free interval $J$ and let $f:X\to X$ be a continuous map. Then
$$
\overline{\Rec(f)}\cap J = \left[\Rec(f) ~ \cup ~ \overline{\Per(f)}\right]\cap J.
$$
\end{theorem}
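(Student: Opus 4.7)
My plan is to address only the nontrivial inclusion $\overline{\Rec(f)}\cap J \subseteq [\Rec(f)\cup\overline{\Per(f)}]\cap J$; the reverse one is immediate since $\Per(f)\subseteq\Rec(f)\subseteq\overline{\Rec(f)}$, intersected with $J$. So I fix $x\in\overline{\Rec(f)}\cap J$ with $x\notin\Rec(f)$ and aim to show $x\in\overline{\Per(f)}$. Since $J$ is open in $X$, open subintervals of $J$ containing $x$ form a neighborhood base at $x$, so it is enough to produce a periodic point in any such subinterval $U$.

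First, I isolate $x$ from its forward orbit. Because $x\notin\Rec(f)$, some neighborhood of $x$ contains $f^n(x)$ for only finitely many $n\geq 1$; none of those iterates equals $x$, or else $x$ would be periodic and hence recurrent. Intersecting with $U$ and shrinking around $x$ therefore gives an open subinterval $J_0\subseteq U$ with $x\in J_0$ and $f^n(x)\notin J_0$ for every $n\geq 1$. Since $x\in\overline{\Rec(f)}$, I then pick $y\in\Rec(f)\cap J_0$ with $y\neq x$. If $y$ is periodic we are done, so I assume $y$ is recurrent but not periodic; the task reduces to exhibiting a periodic point of $f$ inside $J_0$.

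For this I localize the one-dimensional argument to $J$ by fixing a homeomorphism of $J$ onto an open interval in $\RRR$, which transports both the order $\prec$ and the intermediate value theorem. The set $N=\{n\geq 1 : f^n(y)\in J_0\}$ is infinite by recurrence of $y$, and by continuity each $n\in N$ comes with an open subinterval $W_n\ni y$ with $\overline{W_n}\subseteq J_0$ and $f^n(W_n)\subseteq J$. Viewing $f^n|_{W_n}$ through the homeomorphism as a continuous map between open real intervals, I distinguish two cases on $W_n$: either there exist $u,v\in W_n$ with $f^n(u)\prec u$ and $f^n(v)\succ v$, in which case the intermediate value theorem immediately supplies a fixed point of $f^n$ in $W_n\subseteq J_0$, a periodic point of $f$; or $f^n-\Id$ keeps a single nonzero sign on $W_n$, so the $f^n$-iterates of $y$ are strictly monotone in $\prec$ as long as they remain in $W_n$. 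If the first case occurs for some $n\in N$ we are finished, so I focus on the second: either the monotone sequence $f^{kn}(y)$ stays in $\overline{W_n}\subseteq J_0$ and converges there to a fixed point of $f^n$, or the $f$-orbit of $y$ (not merely the $f^n$-orbit), being recurrent to $J_0$, supplies two return times $n_1,n_2\in N$ whose forced signs on neighborhoods of $y$ are opposite; chaining these returns produces an iterate of $f$ for which the diagonal-crossing case above applies.

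The main obstacle, which I expect to be the delicate step, is the final synthesis in the purely monotone case: guaranteeing that a periodic point really materialises inside $J_0$, rather than the monotone iterates escaping $J_0$ or converging to a boundary point outside it, and guaranteeing that when two returns of opposite sign are combined, the composed iterate still carries a whole neighborhood of $y$ (not just $y$ itself) into $J_0$. Both requirements lean essentially on the non-periodicity of $y$ and on the recurrence of $y$ under $f$, and together they form exactly the technical core of the Mai--Shao argument of~\cite{MS}. My plan is to adapt that construction to the present setting, with the free interval $J$ playing the role of a graph edge, and to relegate the bookkeeping to Appendix~1 as the authors announce.
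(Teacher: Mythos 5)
Your setup (reducing to the inclusion $\overline{\Rec(f)}\cap J \subseteq [\Rec(f)\cup\overline{\Per(f)}]\cap J$, isolating $x$ from its forward orbit inside $J_0$, and picking a recurrent $y\in J_0$) matches the paper's strategy, but from that point on your argument discards $x$ entirely and tries to manufacture a periodic point in $J_0$ out of the recurrence and non-periodicity of $y$ alone. That cannot succeed: for an irrational rotation of the circle, with $J$ the circle minus a point and $J_0$ a small subinterval around a point $y$, the returns of $y$ to $J_0$ occur on both sides of $y$, each return $f^n$ acts near $y$ as an order-preserving translation of constant nonzero sign, and the composition of two returns of opposite sign is again a translation of constant sign --- no diagonal crossing ever appears, and there are no periodic points at all. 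So your final ``chaining'' step is false as stated; moreover, the sub-dichotomy in your monotone case (either $f^{kn}(y)$ stays in $\overline{W_n}$, or there are two returns of opposite forced sign) is not exhaustive, since all returns to $J_0$ could land on the same side of $y$. The rotation does not contradict the theorem only because it has no point of $\overline{\Rec(f)}\setminus\Rec(f)$; in other words, the hypothesis you carefully arrange and then never use --- $f^n(x)\notin J_0$ for all $n\ge 1$ --- is exactly the missing ingredient, and your diagnosis that the delicate step ``leans essentially on the non-periodicity of $y$ and on the recurrence of $y$'' points at the wrong place.

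The paper's Appendix~1 is built around precisely this point. Lemma~\ref{L:recper3} proves, for a free arc $A=[0,1]$ with the endpoint $0$ playing the role of your $x$, that if $\Per(f)\cap A=\emptyset$ and $\Rec(f)\cap(0,1)\ne\emptyset$ then the orbit of the \emph{endpoint} enters $(0,1)$; Lemmas~\ref{L:recper1} and~\ref{L:recper2} are the interval arguments showing how the returning orbit of the recurrent point, in the absence of fixed points of the relevant compositions, forces an image of the endpoint into the interior of the arc. The theorem then follows by applying this to arbitrarily small arcs $[x,b]$: either a periodic point appears in $[x,b]$ (so $x\in\overline{\Per(f)}$), or $\Orb_f(x)$ meets $(x,b)$, and since the arcs shrink to $x$ this contradicts $x\notin\Rec(f)$. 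To repair your proof you must restructure the monotone case as this dichotomy about the orbit of $x$, rather than as a direct production of a periodic point from $y$; that restructuring, not any bookkeeping about $y$, is the actual content of the Mai--Shao argument you propose to import.
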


\section{Density of (eventually) periodic points}\label{S:periodic-density}

From now on we consider only compact metrizable spaces.
The reason is that we use results known only in such spaces
(results on cantoroids) and results which do not work
without compactness (Lemma~\ref{intr.pts} and the fact that
transitive maps in compact metrizable spaces have dense set of transitive points).

The following two lemmas are first steps towards the proof of
the dichotomy for transitive maps, see \theoremCref{}.

\begin{lemma}\label{perpt.freeint}
Let $X$ be a compact metrizable space with a free arc $A$ and let $f:X\to X$ be a transitive map.
Assume that $f$ has a periodic point $x_0$ in $A$. Then the set of
periodic points of $f$ is dense in $X$.
\end{lemma}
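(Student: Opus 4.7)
The plan is a three-step reduction followed by a contradiction argument.

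First, transitivity implies that it suffices to show $\Per(f)$ is dense in some open sub-interval of $A$: for any nonempty open $V\subseteq X$ and any open sub-interval $U\subseteq A$, transitivity produces $n\geq 1$ with $U\cap f^{-n}(V)\neq\emptyset$; a periodic $x$ in the intersection (existing if $\Per(f)$ is dense in $U$) sends to a periodic $f^n(x)\in V$. So we aim to show density of $\Per(f)$ on some open sub-arc of $A$.

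Next, by Lemma~\ref{finite.DI} and Lemma~\ref{con.refin}, $f$ has a connected terminal regular periodic decomposition $\mathcal D=(D_0,\dots,D_{m-1})$, with $f^m|_{D_i}$ totally transitive on each $D_i$ by Lemma~\ref{termin.RPD}. Since $\Per(f)=\Per(f^m)=\bigcup_i\Per(f^m|_{D_i})$ and the interiors of the $D_i$ cover a dense subset of $X$, it suffices to work inside some $D_{i_0}$ whose interior meets $A$ in a nontrivial open sub-arc $J_0$ and into which, by pushing the orbit of $x_0$ through the cyclic action of $f$ on $\mathcal D$, we can place a periodic point of $f^m$ inside $J_0$. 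In this way we reduce to the case where $f$ is totally transitive on a connected compact metrizable space and has a periodic point in a free interval $J$; replacing $f$ by a suitable iterate, we may further assume this point $x_0$ is \emph{fixed}.

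For the contradiction step: transitivity on a compact metrizable space gives a dense set of recurrent points, so by Theorem~\ref{per.rec.pts2}, $J\subseteq \Rec(f)\cup\overline{\Per(f)}$. If $\overline{\Per(f)}\cap J$ were not dense in $J$, some open sub-interval $J'\subseteq J$ would satisfy $J'\cap\overline{\Per(f)}=\emptyset$ and thus $J'\subseteq\Rec(f)$. The fixed point $x_0$ then lies in $J\setminus J'$. For each $y\in J'$ recurrence supplies arbitrarily large $n$ with $f^n(y)\in J'$ and $f^n(y)\neq y$. Comparing such returns for two judiciously chosen points of $J'$ and exploiting $x_0$ as a fixed reference point in $A$ produces $y_1,y_2\in J'$ and $n_1,n_2\geq 1$ with $f^{n_1}(y_1)\prec y_1$ and $f^{n_2}(y_2)\succ y_2$ in a common sub-interval of $J'$. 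Invoking Proposition~\ref{disc.intrs} of Appendix~2 to verify that this sub-interval has the required disconnecting property, Lemma~\ref{exist.per.pt} yields a periodic point of $f$ in the convex hull of $\{y_1,y_2,f^{n_1}(y_1),f^{n_2}(y_2)\}\subseteq J'$, contradicting $J'\cap\Per(f)=\emptyset$.

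The main obstacle is precisely this last step: a free arc in a compact metrizable space is not automatically a disconnecting interval (consider, for example, a free arc sitting on a loop), so Lemma~\ref{exist.per.pt} cannot be applied directly. The Appendix~2 result is the bridge --- either extracting a disconnecting sub-interval or providing an equivalent characterization under which the ``left'' and ``right'' returns built from recurrence translate into the data required by Lemma~\ref{exist.per.pt}. The secondary subtlety is engineering the recurrence-based return analysis so that it produces returns on \emph{both} sides of a common reference point, and it is the role of the fixed point $x_0$ (supplied by the hypothesis) to enable this.
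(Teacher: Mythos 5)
Your proposal correctly isolates the two relevant tools (Theorem~\ref{per.rec.pts2} to upgrade dense recurrence in $J'$ to $J'\subseteq\Rec(f)$, and Lemma~\ref{exist.per.pt} to manufacture a periodic point from two-sided returns), and you correctly name the obstacle: the interval need not be disconnecting in $X$. But the bridge you propose does not exist. Proposition~\ref{disc.intrs} only asserts the equivalence of various formulations of ``disconnecting''; it cannot certify that a given free interval disconnects anything, and in general it does not --- a free interval of the circle satisfies none of the conditions (1a)--(4b), and the same happens for a free arc sitting on any loop, which is exactly the example you yourself raise. Nor does ``engineering returns on both sides'' rescue Lemma~\ref{exist.per.pt}, whose hypothesis of a disconnecting interval is essential: an irrational rotation has two-sided returns everywhere and no periodic points. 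So the final step fails as written, and the role you assign to $x_0$ (helping to produce two-sided returns) is not the role it has to play.

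The missing idea is a change of ambient space. Since every point of $J'$ is recurrent and $J'\cap\Per(f)=\emptyset$, no point of $J'$ is ever mapped onto the periodic orbit of $x_0$ (a recurrent point whose forward orbit enters a periodic orbit lies in that orbit and is itself periodic). Hence the invariant, dense, $f$-transitive set $Y=\bigcup_{n\ge0}f^n(J')$ omits $x_0$. Its component $Y_0\supseteq J'$ is a connected space in which $J'$ \emph{is} disconnecting: for $x\in J'$ the arc $[x,x_0]$ is closed in $X$, so $(x,x_0)\cap Y_0$ and $Y_0\setminus[x,x_0]$ separate $Y_0\setminus\{x\}$; this is where $x_0\notin Y_0$ and Proposition~\ref{disc.intrs}(i) are actually used. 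Lemma~\ref{denseperiod}, applied to the appropriate iterate of $f$ on $Y_0$, then gives periodic points dense in $J'$, the desired contradiction. Incidentally, your intermediate reduction via a terminal regular periodic decomposition is both unnecessary and shaky: the finite orbit of $x_0$ need not meet $\Int(D_{i_0})\cap A$, so you cannot in general ``place a periodic point of $f^m$ inside $J_0$''.
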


\begin{proof}
For a transitive map, the set of periodic points is either nowhere dense or dense.
So it is sufficient to show that the
periodic points of $f$ are dense in $A$. Assume, on the contrary,
that there is a free interval $J\subseteq A$ with $J\cap \Per(f)=\emptyset$.
The space $X$ has no isolated point (otherwise, due to transitivity of $f$, it would be finite).
By Lemma~\ref{intr.pts}, the set $\tr(f)$ of transitive points of $f$
is dense in $X$ and hence dense in $J$.
Every transitive point is
clearly recurrent, so $\Rec(f)$ is dense in $J$. By Theorem~\ref{per.rec.pts2}
we have
$$
J=\overline{\Rec(f)}\cap J=\left[\Rec(f)\cup \overline{\Per(f)}\right]\cap J
= \Rec(f)\cap J.
$$
Thus every point of $J$ is recurrent and hence no point of $J$ is eventually mapped
to $x_0$.

For $n\geq 0$ put $J_n=f^n(J)$ and consider the set $Y=\bigcup_{n=0}^{\infty}J_n$.
Then $x_0\not\in Y$. Further, $Y$ is $f$-invariant
and is dense in $X$ by transitivity of $f$. Therefore the restriction
of $f$ to $Y$ is also transitive. Since $Y$ contains a nonempty open connected set $J$,
$Y$ has only finitely many connected components, say $Y_0,\dots,Y_{p-1}$,
they are cyclically permuted by $f$ and the restriction of $f^p$ to each of them
is topologically transitive. We may assume that $Y_0$ is the component of
$Y$ containing $J$. So $Y_0$ is a connected space with a free interval $J$.

We claim that $J$ is in fact a disconnecting interval for $Y_0$.
According to Proposition~\ref{disc.intrs}(i) it is sufficient to find a point $x\in J$ such that
$Y_0\setminus\{x\}$ is not connected. We show that every $x\in J$ works. To this end
fix $x\in J$. Then $x\ne x_0$ and we may assume that $x\prec x_0$.
Notice that, since $X$ is Hausdorff, the compact set $[x,x_0]$ is closed in $X$, hence
$X\setminus [x,x_0]$ is open.
Then $(x,x_0)$ and $X\setminus [x,x_0]$ are disjoint open sets in $X$ whose union
contains $Y_0\setminus \{x\}$. It follows easily that $(x,x_0)\cap Y_0$ and $Y_0\setminus [x,x_0]$
form a separation of $Y_0\setminus\{x\}$, so $Y_0\setminus\{x\}$ is not connected.

By Lemma~\ref{denseperiod} the periodic points of $f^p|_{Y_0}$ are dense in
$Y_0$. Consequently, the periodic points of $f$ are dense in $J$ which is a contradiction.
\end{proof}

\begin{lemma}\label{S.impl.per}
Let $X$ be a compact metrizable space with a free interval $J$ and let
$f:X\to X$ be a transitive map. Assume that there is a nonempty closed nowhere
dense invariant set $S\subseteq X$ such that $S\cap J\neq\emptyset$. Then
the set of periodic points of $f$ is dense in $X$.
\end{lemma}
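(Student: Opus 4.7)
The plan is to argue by contradiction, mirroring the proof of Lemma~\ref{perpt.freeint} with a point of $S\cap J$ playing the role that the external periodic point played there. Since by Lemma~\ref{perpt.freeint} a single periodic point of $f$ in $J$ already implies density of $\Per(f)$ in $X$, it suffices to exhibit such a point. Assume therefore $\Per(f)\cap J=\emptyset$ and aim for a contradiction. Because $J$ is open, any limit of periodic points landing in $J$ would already force periodic points to be in $J$, so our assumption upgrades to $\overline{\Per(f)}\cap J=\emptyset$. The transitivity of $f$ on $X$ (a space without isolated points) gives, via Lemma~\ref{intr.pts}, that $\tr(f)\subseteq\Rec(f)$ is dense in $X$, and Theorem~\ref{per.rec.pts2} then yields
\[
J=\overline{\Rec(f)}\cap J=\left[\Rec(f)\cup\overline{\Per(f)}\right]\cap J=\Rec(f)\cap J,
\]
so every point of $J$ is recurrent.

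Next I would exploit $S$ to produce the reference point. Since $S\cap J$ is nonempty and nowhere dense in $J$, the open dense set $J\setminus S$ decomposes into open subintervals, and at least one such component $(a,b)$ has an endpoint in $S\cap J$; after possibly relabelling, assume $a\in S\cap J$. Put $J'=(a,b)$ and $x_0=a$; then $[a,b]\subseteq J$ is a free arc, $J'\cap S=\emptyset$, and $x_0\in S\setminus J'$. Imitating Lemma~\ref{perpt.freeint}, set $Y=\bigcup_{n\ge0}f^n(J')$, which is forward $f$-invariant and, by transitivity, dense in $X$. The crucial claim is $x_0\notin Y$: if $x_0=f^n(r)$ for some $r\in J'$ and $n\ge0$, then $f$-invariance of $S$ places the tail $\{f^m(r):m\ge n\}$ inside the closed set $S$, and the recurrence of $r\in J$ established above then forces $r\in S$, contradicting $r\in J'\subseteq J\setminus S$. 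As in Lemma~\ref{perpt.freeint}, since $Y$ contains the nonempty open connected set $J'$, it has only finitely many connected components cyclically permuted by $f$; let $Y_0$ denote the component containing $J'$ and $p$ the number of components, so that $f^p|_{Y_0}$ is transitive.

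The concluding step is the disconnecting-interval argument of Lemma~\ref{perpt.freeint}: for any $x\in J'$ the compact arc $[x_0,x]\subseteq J$ is closed in the Hausdorff space $X$, so the disjoint open sets $(x_0,x)$ and $X\setminus[x_0,x]$ cover $X\setminus\{x_0,x\}$; because $x_0\notin Y_0$, their intersections with $Y_0$ give a nontrivial separation of $Y_0\setminus\{x\}$, with $(x_0,x)\subseteq J'$ and $(x,b)\subseteq J'$ supplying nonempty contributions on the two sides. Proposition~\ref{disc.intrs}(i) then makes $J'$ a disconnecting interval of $Y_0$, and Lemma~\ref{denseperiod} applied to the transitive map $f^p|_{Y_0}$ on the connected space $Y_0$ produces dense $f$-periodic points in $Y_0$, hence in $J'\subseteq J$, contradicting $\Per(f)\cap J=\emptyset$. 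By Lemma~\ref{perpt.freeint}, the resulting periodic point in $J$ upgrades to density of $\Per(f)$ in all of $X$.

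The main obstacle is securing a reference point $x_0\in S\cap J$ with $x_0\notin Y$, since the entire separation argument rests on this exclusion. The nowhere-density of $S$ in $J$ furnishes a component of $J\setminus S$ with an endpoint in $S\cap J$, and the combination of $f$-invariance of $S$ with the recurrence of every point of $J$ (obtained from Theorem~\ref{per.rec.pts2} once $\overline{\Per(f)}\cap J=\emptyset$) is precisely what is needed to prevent any orbit from $J'$ from ever reaching $x_0$. With this one point in place, the remaining construction is a verbatim transposition of the proof of Lemma~\ref{perpt.freeint}.
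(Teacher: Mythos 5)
Your argument is correct, but it follows a genuinely different route from the paper's. The paper proves Lemma~\ref{S.impl.per} by collapsing the closed invariant set $S$ to a single point $s$ in an upper semi-continuous quotient of $X$; the induced factor map $g$ is transitive, $s$ becomes a fixed point lying at the end of a free arc $\pi(A)$, and Lemma~\ref{perpt.freeint} is then invoked as a black box (the nowhere-density of $S$ guarantees that density of $\Per(g)$ in the quotient pulls back to density of $\Per(f)$ in $X$). You instead re-run the internal machinery of the proof of Lemma~\ref{perpt.freeint}, with an endpoint $x_0\in S\cap J$ of a component $J'$ of $J\setminus S$ playing the role of the periodic point: the key observation that no orbit from $J'$ ever reaches $x_0$ is supplied by the invariance and closedness of $S$ together with the recurrence of every point of $J$ (which you correctly derive from Lemma~\ref{intr.pts} and Theorem~\ref{per.rec.pts2} under the contradiction hypothesis $\Per(f)\cap J=\emptyset$), exactly paralleling the role that the finite, closed orbit of the periodic point plays in the original lemma. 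Both proofs are sound; the paper's quotient trick is shorter because it reuses Lemma~\ref{perpt.freeint} wholesale, while yours avoids the quotient construction at the cost of duplicating the separation and disconnecting-interval argument. One cosmetic remark: the component $(a,b)$ of $J\setminus S$ may be unbounded on the right within $J$, so $[a,b]$ need not be a free arc as you claim in passing; this is harmless, since your separation argument only uses the arcs $[x_0,x]$ for $x\in J'$, which do lie in $J$.
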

\begin{proof}
Let $\pi$
denote the quotient map which collapses the set $S$ into a point, call it $s$,
and denote the corresponding quotient space of $X$ by $Y$. Obviously,
the underlying decomposition of $X$ is upper semi-continuous which
implies that $Y$ is compact and metrizable. Since $S$ is invariant
for~$f$, we have an induced dynamics on $Y$. More precisely, there
is a continuous map $g:Y\to Y$ with $g\circ\pi=\pi\circ f$.
The restriction of $\pi$ to $X\setminus S$ is a homeomorphism onto
$Y\setminus\{s\}$ (it is a continuous bijection and it is open since
every open subset of $X\setminus S$ is saturated). So, since
$S$ is nowhere dense, to prove the density of $\Per(f)$ in $X$
it is sufficient to show that $\Per(g)$ is dense in $Y$.

Choose a free arc $A$ in $J\subseteq X$
whose intersection with $S$ is just one point, an end point of $A$.
Then $B=\pi(A)$ is a free arc in $Y$ whose one end point is $s$.
Further, $(Y,g)$ is a transitive system, being
a factor of $(X,f)$. Finally,
$s\in B$ is a fixed point of $g$.
By Lemma~\ref{perpt.freeint} applied to $g$
we obtain that $\Per(g)$ is dense in $Y$.
\end{proof}

If $f$ is a transitive map on an infinite compact metrizable space
with dense periodic points then it can happen that
there are no points in the system which are eventually periodic
but not periodic. Such an example can be found in \cite[Section~3]{DY}.
(It is a so-called ToP-system, see also the end of Section~\ref{S:dense-periodicity}).
The following lemma shows that under the additional assumption that
the space has a free interval the eventually periodic points do exist.

\begin{lemma}\label{dense.ev.per}
Let $X$ be a compact metrizable space
with a free interval $J$ and let $f:X\to X$ be a
transitive map with dense set of periodic points.
Then the set of all eventually periodic points of
$f$ which are not periodic is dense in $X$.
\end{lemma}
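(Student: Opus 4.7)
The plan is to show that the set $E$ of eventually periodic non-periodic points of $f$ is dense in $X$, by first establishing density within the free interval $J$ and then transferring it to all of $X$. Two preliminary observations are used throughout. \textbf{(O1)} No iterate $f^N$ of $f$ can equal the identity on an open subarc of $J$: otherwise the closure of the forward $f$-orbit of such an arc is a closed $f$-invariant set with nonempty interior, hence equal to $X$ by transitivity, so $f^N=\Id_X$, which is incompatible with transitivity on the infinite space $X$. \textbf{(O2)} By Lemma~\ref{intr.pts}, the set $\tr(f)$ of transitive points is dense in $X$, and no transitive point is eventually periodic (as its orbit is dense in the infinite space).

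To show $E$ is dense in $J$, fix an open subarc $I\subseteq J$, pick a periodic point $p\in I$ of period $k$ using density of $\Per(f)$, let $g=f^k$, and shrink $I$ so that $g(\overline I)\subseteq J$ and $I\cap\Orb_f(p)=\{p\}$. By \textbf{(O2)}, $g$ is not constantly equal to $p$ on any open subset of $I$. \emph{Case A:} If some $y\in I\setminus\{p\}$ satisfies $g(y)=p$, then $y$ is eventually periodic; if $y$ were periodic, the relation $p\in\Orb(y)$ would force $y\in\Orb_f(p)$, contradicting our choice of $I$; hence $y\in E$. \emph{Case B:} Otherwise, using density of $\Per(f)$ again, pick a second periodic point $q\in I$, $q\ne p$, of period $\ell$, and set $h=f^{k\ell}$, which fixes both $p$ and $q$. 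For $I$ small enough, $h([p,q])\subseteq J$ is a subarc containing $[p,q]$. If $h|_{[p,q]}$ is strictly monotone, it must be a homeomorphism $[p,q]\to[p,q]$; density of $\Per(f)\cap[p,q]$ then forces the closed set $\Fix(h)\cap[p,q]$ to equal $[p,q]$, i.e., $h=\Id$ on $[p,q]$---contradicting \textbf{(O1)}. Otherwise $h|_{[p,q]}$ has a flat part or a local extremum, and in either case the intermediate value theorem (or direct inspection of the flat part) furnishes some $y\in(p,q)$ with $h(y)\in\{p,q\}$; the orbit argument above then gives $y\in E$.

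For the transfer to $X$, observe that $E$ is backward invariant under $f$: if $f(x)\in E$ then $x$ is eventually periodic, and if $x$ were periodic then $f(x)$ would also be periodic, contradicting $f(x)\in E$. Hence $f^{-n}(E)\subseteq E$ for every $n\ge 0$. Given any nonempty open $V\subseteq X$, transitivity provides $n\ge 0$ and a nonempty open connected $V''\subseteq V\cap f^{-n}(J)$ (obtained by taking a small open arc $J_0\subseteq J$ and a small connected neighborhood inside $V\cap f^{-n}(J_0)$, using local connectedness of $J$ propagated via $f^n$). Then $f^n(V'')$ is a connected subset of $J$, hence a subarc. If non-degenerate, it has nonempty interior in $J$ and meets the dense set $E\cap J$; pulling back a meeting point to $V''$ yields a point in $V\cap f^{-n}(E)\subseteq V\cap E$. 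If $f^n(V'')=\{c\}$ is degenerate, density of $\Per(f)$ inside $V''$ provides a periodic point mapping to $c$, so $c$ is periodic; consequently $V''$ consists entirely of eventually periodic points, of which only the finitely many in $\Orb_f(c)$ are periodic, so $V''\cap E\ne\emptyset$.

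The most delicate step is Case B in the $J$-density argument: one must resolve the monotonicity-versus-folding dichotomy for $h|_{[p,q]}$ using the intermediate value theorem and density of $\Per(f)$, extracting either a non-trivial preimage of $p$ or $q$ or an arc of $h$-fixed points contradicting \textbf{(O1)}. A secondary technicality is the construction of the connected open $V''\subseteq V\cap f^{-n}(J)$ in the transfer step, which relies on the local connectedness of the free interval $J$ and the continuity of $f^n$.
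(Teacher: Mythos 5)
Your overall strategy (prove density of the eventually-periodic-non-periodic set $E$ inside $J$, then transfer to $X$ using backward invariance of $E$ and transitivity) matches the paper's, but the core step of producing a point of $E$ in a given subarc has a genuine gap. In Case B you claim that if $h|_{[p,q]}$ is not strictly monotone then ``the intermediate value theorem \dots furnishes some $y\in(p,q)$ with $h(y)\in\{p,q\}$.'' This is false: a continuous map of $[p,q]$ into a larger arc with $h(p)=p$ and $h(q)=q$ can have interior local extrema whose values stay strictly inside $(p,q)$ (e.g.\ increase, dip slightly, increase again), in which case $h^{-1}(\{p,q\})\cap(p,q)=\emptyset$ and the IVT yields nothing. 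Non-monotonicity only gives a value attained twice, not a preimage of a prescribed periodic point, and that is precisely the difficulty. The paper circumvents it by first passing to a terminal regular periodic decomposition (Lemma~\ref{finite.DI}, Lemma~\ref{termin.RPD}) and invoking Lemma~\ref{TT+DPimplWM} to get a \emph{weakly mixing} map $g$ on a piece containing a free interval; weak mixing forces $g^k(V)$ to meet both gaps $(p_1,p_2)$ and $(p_2,p_3)$ between three points of a periodic orbit, so the connected set $g^k(V)$ must contain one of the $p_j$. Nothing in your argument supplies this ``stretching'' information, and mere transitivity with dense periodic points restricted to $J$ does not. There are two further problems: the choice of $I$ is circular ($h=f^{k\ell}$ depends on the period $\ell$ of $q$, which is chosen \emph{inside} the already-shrunk $I$, so you cannot have arranged $h([p,q])\subseteq J$ in advance), and when $h(y)=q$ you only excluded $y\in\Orb_f(p)$, not $y\in\Orb_f(q)$, so $y$ could still be periodic.

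The transfer step also does not work as written. You need a nonempty \emph{connected} open $V''\subseteq V\cap f^{-n}(J)$, justified by ``local connectedness of $J$ propagated via $f^n$.'' Local connectedness does not pull back under continuous maps, and $X$ is not assumed to be locally connected anywhere outside $J$; the open set $V\cap f^{-n}(J_0)$ may contain no nonempty connected open subset at all. The paper avoids this by transferring in the forward direction: it takes a small arc $K\subseteq J'$ around a transitive point with $f^j(K)\subseteq U$ and $f^n(K)\subseteq J'$, notes $f^n(K)$ is a non-degenerate subinterval of $J'$ (it contains the distinct images of a transitive and a periodic point), picks $y_0\in f^n(K)\cap E$, and pulls $y_0$ back into $U$ using backward invariance of $E$. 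Your backward-invariance observation is correct and is the right tool, but it must be combined with a forward image of an arc of $J$, not a preimage of $J$.
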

\begin{proof}
By Lemma~\ref{finite.DI}, $f$ has a terminal
regular periodic decomposition $\mathcal D=(D_0,\dots ,D_{m-1})$.
There is $i\in \{0,\dots ,m-1\}$ such that
$\Int (D_i)\cap J\neq \emptyset$. Then $D_i$
is a compact metrizable space with a free interval $J'$
(a subinterval of $J$). Moreover, by Lemma~\ref{termin.RPD},
the map $g=f^m|_{D_i}$ is totally transitive on $D_i$.
Since $D_i$ is regular closed and $\Per(f)$ is dense in $X$
we get that $\Per(g)$ is dense in $D_i$.
By Lemma~\ref{TT+DPimplWM}, $g$ is weakly
mixing. Now choose a periodic point $p$ of
$g$ satisfying $\card (\Orb_{g}
(p)\cap J')\geq 3$. Let $p_1\prec p_2\prec
p_3$ be members of $\Orb_{g}(p)$ lying in
$J'$. We show that the set of points non-periodic
for $g$ which are eventually mapped into
$\Orb_{g}(p)$ (and hence are eventually
periodic for $g$) is dense in $J'$. To this end
we prove that any open interval $V\subseteq J'\setminus
\Orb_{g}(p)$ contains such a point.
Since $g$ is
weakly mixing, there exists $k\in \NNN$ with
$g^k(V)\cap (p_j,p_{j+1})\neq \emptyset$
for $j=1,2$. Then the set $g^k(V)$, being connected,
contains $p_j$ for some $j\in \{ 1,2,3\}$.
Thus there exists a point $e\in V$ which is mapped
 by $g^k$ into $\Orb_{g}(p)$.
Since $e\notin
\Orb_{g}(p)$, $e$ is eventually periodic but not periodic for $g$.

We have shown that points in $J'$ which are
eventually periodic but not periodic for $g$
are dense in $J'$. Trivially, in this statement we may replace
$g$ by $f$.

To finish the proof fix a nonempty open set $U$ in $X$.
By transitivity of $f$ there is  an open
subinterval $K$ of $J'$ and integers $0\le j\le n$ such that
$f^j(K)\subseteq U$
and $f^n(K)\subseteq J'$.
The interval $f^n(K)$ is non-degenerate
and so it contains a point $y_0$ which is eventually periodic
but not periodic for $f$.
Since $f^{(n-j)}(U)\supseteq f^n(K)$, there is $x_0\in U$
with $f^{(n-j)}(x_0)=y_0$. The point $x_0$ is eventually periodic but not periodic
for $f$
which proves the density of such points in $X$.
\end{proof}

\section{Minimality and a trichotomy
for minimal sets (proofs of Theorems~A and B)}\label{S:minimality}

We embark on the proof of Theorems~A and B.

\begin{lemma}\label{free.intrs.1}
Let $X$ be a second countable Hausdorff space and $J$ be a nonempty subset of $X$. Then
$J$ is a free interval if and only if it is open, connected, locally homeomorphic to $\RRR$ and it is not a circle.
\end{lemma}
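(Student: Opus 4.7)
The plan is to split the equivalence into the two implications, with the backward direction reducing to the classification of connected $1$-manifolds.

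For the forward implication, suppose $J$ is a free interval. Then by definition $J$ is open in $X$ and there is a homeomorphism $\varphi\colon J\to I$ onto an open interval $I\subseteq\RRR$. Clearly $I$ is connected, locally homeomorphic to $\RRR$, and not homeomorphic to the circle (for instance because $I$ is non-compact, or because every point of $I$ is a cut point while no point of $\SSS^1$ is); transporting these properties along $\varphi$ gives them for $J$.

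For the reverse implication, assume $J$ is open, connected, locally homeomorphic to $\RRR$, and not homeomorphic to $\SSS^1$. Since $X$ is second countable and Hausdorff, so is the subspace $J$. The local-homeomorphism assumption then means exactly that $J$ is a (topological) $1$-manifold without boundary. I would then invoke the classification theorem for connected second-countable Hausdorff $1$-manifolds without boundary, which states that any such manifold is homeomorphic to either $\RRR$ or $\SSS^1$. Since $J\not\cong\SSS^1$ by hypothesis, we get $J\cong\RRR$, and hence $J$ is homeomorphic to an open interval of the real line, i.e., $J$ is a free interval of $X$.

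The only delicate point is the appeal to the classification of $1$-manifolds; I expect this to be the main (and only) obstacle, but it is a classical result. The second countability assumption on $X$ is used precisely here --- without it the long line would obstruct the conclusion, so the hypothesis cannot be weakened. Everything else is routine transfer of properties across a homeomorphism.
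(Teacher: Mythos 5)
Your proof is correct and follows essentially the same route as the paper: the forward direction is a routine transfer of properties along the defining homeomorphism, and the reverse direction invokes the classification of connected (second countable Hausdorff) one-dimensional manifolds without boundary as either $\RRR$ or $\SSS^1$, then excludes the circle by hypothesis. Your additional remark correctly identifies where second countability enters (ruling out the long line), which the paper leaves implicit.
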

\begin{proof} One implication is trivial, for the converse one use the fact that
 if $J$ is connected and locally homeomorphic to $\RRR$ then, being a
 connected one-dimensional topological manifold, it is either a circle or an open interval.
\end{proof}

Notice that if $J$ is a free interval in a circle $X$ then always there is  maximal (with respect to the inclusion)
free interval $J^*$ containing $J$ (in fact, $J^*$ is the circle minus a point). However, if the complement of $J$ is not a singleton then
$J^*$ is not unique. If $X$ is not a circle, the following lemma shows that the things work better.

\begin{lemma}\label{free.intrs}
Let $X$ be a continuum which is not a circle.
\begin{enumerate}
  \item[(a)] If $J_1,J_2$ are free intervals in $X$ then either they are disjoint or their union is again a free interval.
  \item[(b)] Two maximal (with respect to the inclusion) free intervals of $X$ either are disjoint or coincide.
  \item[(c)] If $J$ is a free interval in $X$ then it is a subset of a (unique) maximal
    free interval.
\end{enumerate}
\end{lemma}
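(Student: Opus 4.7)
The plan is to deduce (a) from Lemma~\ref{free.intrs.1} and then get (b) and (c) as easy corollaries. For (a), suppose $J_1\cap J_2\neq\emptyset$ and set $J=J_1\cup J_2$. The set $J$ is open as a union of opens, connected as a union of connected sets sharing a point, and locally homeomorphic to $\RRR$ because every point of $J$ lies in $J_1$ or in $J_2$ and therefore has a neighborhood in that set homeomorphic to an open real interval. Since $X$ is a continuum it is second countable, and hence so is $J$. By Lemma~\ref{free.intrs.1}, $J$ is either a circle or an open interval.

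The key step is to rule out the circle alternative, and this is where the hypothesis that $X$ is not a circle does its work. If $J$ were homeomorphic to a circle, it would be compact; since $X$ is Hausdorff, $J$ would then be closed in $X$. But $J$ is also open, and $X$ is connected, forcing $J=X$, contradicting the assumption that $X$ is not a circle. Hence $J$ is homeomorphic to an open interval and is in particular a free interval of $X$, which finishes (a).

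Statement (b) is immediate: if $J_1,J_2$ are maximal free intervals with a common point, then by (a) the free interval $J_1\cup J_2$ contains both, and maximality gives $J_1=J_1\cup J_2=J_2$. For (c), fix a free interval $J$, let $\mathcal{F}$ be the collection of all free intervals of $X$ containing $J$, and set $J^{*}=\bigcup\mathcal{F}$. Each member of $\mathcal{F}$ contains the nonempty connected set $J$, so $J^{*}$ is connected; it is open as a union of opens, second countable as a subspace of $X$, and locally homeomorphic to $\RRR$ since each of its points lies in some member of $\mathcal{F}$. The same compactness/clopen argument as in step~(a) shows that $J^{*}$ cannot be a circle, so Lemma~\ref{free.intrs.1} makes $J^{*}$ a free interval. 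By construction $J^{*}$ is maximal, and uniqueness follows from (b).

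The only genuine obstacle is the exclusion of the circle case, and this relies essentially on the combination of three hypotheses that are always available: $X$ is Hausdorff (to get that a compact subspace is closed), $X$ is connected (to convert clopen into everything), and $X$ is not itself a circle (to produce the contradiction); everything else is bookkeeping.
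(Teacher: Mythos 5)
Your proof is correct and follows essentially the same route as the paper's: both deduce (a) from Lemma~\ref{free.intrs.1} by checking that the union is open, connected and locally homeomorphic to $\RRR$, and both exclude the circle alternative via the compact-hence-closed-hence-clopen argument combined with connectedness of $X$; parts (b) and (c) are then handled identically. No gaps.
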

\begin{proof} (a)
Assume that $J_1\cap J_2\ne \emptyset$. Then $J=J_1\cup J_2$ is a connected open subset of $X$.
Suppose that $J$ is a circle. Then $J$ is closed in $X$ and so it is a clopen subset of $X$. Hence $X=J$
by connectedness of $X$, a contradiction. In view of Lemma~\ref{free.intrs.1} it remains to show that
$J$ is locally homeomorphic to $\RRR$.
Fix $x\in J$; we may assume that $x\in J_1$.
Since $J_1$ is a free interval there is a neighborhood $U$ of $x$ (in the topology of $X$) which is homeomorphic to $\RRR$
and is a subset of $J_1$, hence a subset of $J$. So, $U$ is a neighborhood of $x$ (in the topology of $J$) homeomorphic to $\RRR$.

(b)
Indeed, if $J_1$ and $J_2$
are maximal free intervals in $X$ with $J_1\cap J_2\neq\emptyset$ then, by (a),
$J_1\cup J_2$ is a free interval in $X$. By maximality of both $J_1$ and $J_2$
we have $J_1=J_1\cup J_2=J_2$.

(c)
Let $J^*$ denote the union of all free intervals containing $J$. Then, analogously as in (a), $J^*$ is a free interval. Obviously it
is a maximal free interval containing $J$. Uniqueness follows from~(b).
\end{proof}

\begin{lemma}\label{transit.homeo}
Let $X$ be a continuum with a free interval and let $f$ be a
transitive homeomorphism on $X$. Then $X$ is a circle and $f$ is conjugate to an irrational rotation.
\end{lemma}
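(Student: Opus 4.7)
The plan is to argue by contradiction: assume $X$ is not a circle, produce a contradiction, and then apply Lemma~\ref{irr.rot}. Under this assumption, Lemma~\ref{free.intrs}(c) furnishes a unique maximal free interval $J^*$ containing $J$, and because $f$ is a homeomorphism it permutes maximal free intervals --- indeed, if $K\supseteq f(J^*)$ is a free interval, then $f^{-1}(K)\supseteq J^*$ is a free interval, so $f^{-1}(K)=J^*$ by maximality and $K=f(J^*)$. Write $J^*_n:=f^n(J^*)$ for $n\in\mathbb{Z}$ and split on whether the orbit $\{J^*_n\}$ is finite or infinite.

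Assume first the orbit has minimal period $k\ge 1$. Then $J^*_0,\dots,J^*_{k-1}$ are pairwise disjoint by Lemma~\ref{free.intrs}(b), and $g:=f^k|_{J^*_0}$ is a self-homeomorphism of $J^*_0\cong\RRR$. I would first check that $g$ is topologically transitive on $J^*_0$: for nonempty open $U,V\subseteq J^*_0$, transitivity of $f$ on $X$ yields some $m\ge 1$ with $f^m(U)\cap V\ne\emptyset$; but $f^m(U)\subseteq J^*_{m\bmod k}$ is disjoint from $V\subseteq J^*_0$ unless $k\mid m$, so writing $m=kn$ gives $g^n(U)\cap V\ne\emptyset$. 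Since $J^*_0\cong\RRR$ is a second countable Baire space without isolated points, transitivity of $g$ would produce a point with dense $g$-orbit in $J^*_0$. However, every self-homeomorphism of $\RRR$ is strictly monotone, so each of its orbits is a monotone sequence (or the interleaving of two such) and cannot be dense in $\RRR$ --- a contradiction.

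Assume instead the orbit is infinite, so the $J^*_n$ are pairwise disjoint for distinct $n\in\mathbb{Z}$. The continuum $X$ has no isolated points (it contains an infinite open interval and is connected), so Lemma~\ref{intr.pts} supplies a transitive point $x_0\in J\subseteq J^*_0$; such a point is automatically recurrent, so one may choose $n_k\to\infty$ with $f^{n_k}(x_0)\to x_0$. But then $f^{n_k}(x_0)\in J^*_{n_k}$ lies outside the open set $J^*_0$ for every $n_k\ne 0$, contradicting the convergence $f^{n_k}(x_0)\to x_0\in J^*_0$. Both cases being impossible, $X$ must be a circle, and Lemma~\ref{irr.rot} completes the proof. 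The only slightly delicate point is the transitivity of $g$ in the finite-orbit case; once one notes that the return times of $J^*_0$ to itself must be multiples of $k$, the rest is bookkeeping, and I do not anticipate any further obstacle.
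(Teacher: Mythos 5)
Your proof is correct, and while its setup coincides with the paper's, the core of the argument is genuinely different. Like the paper, you assume $X$ is not a circle, use Lemma~\ref{free.intrs} to get a unique maximal free interval $J^*$, and note that a homeomorphism permutes maximal free intervals. From there the paper shows that there are only finitely many maximal free intervals, cyclically permuted, collapses the complement of one of them to a single point to obtain a circle $Y$, and thereby produces a transitive homeomorphism of $Y$ with a fixed point, contradicting Lemma~\ref{irr.rot}. You instead split on whether the orbit $\{f^n(J^*)\}$ is finite or infinite: in the finite case you restrict $f^k$ to $J^*$ itself and observe that a self-homeomorphism of a space homeomorphic to $\RRR$ cannot have a dense orbit (strict monotonicity forces orbits to be monotone sequences, or interleavings of two), and in the infinite case you contradict recurrence of a transitive point, since its iterates land in pairwise disjoint intervals and never return to $J^*$. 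Your route is more elementary --- it needs neither the quotient construction nor Lemma~\ref{irr.rot} to reach the contradiction (only to conclude in the genuine circle case) --- and it makes explicit the infinite-orbit situation, which the paper disposes of in a single sentence (``transitivity of $f$ \dots gives that there are only finitely many''). The paper's approach, in exchange, packages the contradiction into the classical Poincar\'e--Denjoy fact already quoted as Lemma~\ref{irr.rot}. The only step in yours that deserves an explicit word is the appeal to the Birkhoff transitivity theorem on the non-compact space $J^*$; this is legitimate because $J^*\cong\RRR$ is a second countable Baire space, but it is worth saying so, since elsewhere the paper only uses the existence of transitive points on compact spaces.
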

\begin{proof}
If $X$ is a circle use Lemma~\ref{irr.rot}.
Supposing that $X$ is not a circle, we are going to find a space $Y$ homeomorphic to a circle
and a transitive homeomorphism $g$ on it with a fixed point, which will contradict Lemma~\ref{irr.rot}.
The system $(Y,g)$ will be obtained as a factor of $(X,f^n)$ for some $n>0$.

By Lemma~\ref{free.intrs}(c) there is a maximal free interval
and since $f$ is a homeomorphism, every maximal free interval is mapped
onto such an interval. Then transitivity of $f$, in view of Lemma~\ref{free.intrs}(b),
gives that there are only finitely many pairwise disjoint maximal free intervals, say $J_1,\dots,J_n$ ($n\ge 1$),
and $f$ permutes them in a periodic way.
Since $X$ is compact and the free intervals $J_1,\dots,J_n$ are pairwise disjoint,
the set $X\setminus\bigcup_{i=1}^n J_i$ is nonempty. It is closed, $f$-invariant and, by transitivity of $f$, nowhere dense.

Denote by $\mathcal D$ the decomposition of $X$ whose elements
are the singletons $\{x\}$ with $x\in J_1$ and the (closed) set $X\setminus J_1$.
Obviously $\mathcal D$ is upper semi-continuous and so the decomposition
space $Y=X/\mathcal D$ is a (metrizable) continuum. Denote by $\pi$ the quotient
map $X\to Y$. Clearly, $\pi(J_1)$ is a free interval in $Y$.
Since $Y\setminus\pi(J_1)$ is a singleton,
the space $Y$, being a one-point compactification of an open interval, is a circle.

The map $f^n$ is a homeomorphism of $X$ and both $J_1$ and $X\setminus J_1$
are $f^n-$ invariant. Consequently, there exists a homeomorphism $g$ of $Y$
with $g\circ\pi=\pi\circ f^n$.
Moreover, an elementary argument gives that $f^n|_{J_1}$ is transitive
(alternatively one can use (RPD6) for the regular periodic decomposition
$(J_1,\dots,J_n)$ for $f$ restricted to $\bigcup_i J_i$).
Therefore also $g|_{\pi(J_1)}$, being conjugate to $f^n|_{J_1}$, is transitive.
It follows that the homeomorphism $g$ is transitive on the circle $Y$.
However, the singleton $\pi(X\setminus J_1)$ is a fixed point for $g$
which contradicts Lemma~\ref{irr.rot}.
\end{proof}

\begin{corollary}\label{transit.homeo.disc}
A continuum with a disconnecting interval does not admit a transitive homeomorphism.
\end{corollary}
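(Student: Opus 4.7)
The plan is to derive the corollary as an immediate contradiction from Lemma~\ref{transit.homeo}, exploiting the fact that a disconnecting interval is, by Definition~\ref{D:disc}, in particular a free interval. So suppose, for contradiction, that $X$ is a continuum possessing a disconnecting interval $J$ and that $X$ admits a transitive homeomorphism $f$. Since $J$ is a free interval of $X$, the hypotheses of Lemma~\ref{transit.homeo} are satisfied, and the conclusion forces $X$ to be (homeomorphic to) the circle $\SSS^1$.

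Next I would show that this outcome is incompatible with $X$ having a disconnecting interval. Pick any point $x \in J$. Since $X \cong \SSS^1$, removal of any single point from $X$ yields a space homeomorphic to an open arc, which is connected. Thus $X \setminus \{x\}$ has exactly one component, not two, contradicting the defining property of a disconnecting interval. The contradiction shows that no such $f$ can exist, proving the corollary.

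I do not expect any genuine obstacle here: the whole argument reduces to Lemma~\ref{transit.homeo} plus the elementary topological fact that the circle has no cut points. The only thing worth being careful about is the logical direction, namely that every disconnecting interval is a free interval (so Lemma~\ref{transit.homeo} is applicable), and that on $\SSS^1$ no point $x$ satisfies the two-component condition in Definition~\ref{D:disc}, so the presumed disconnecting interval $J$ cannot survive the identification $X \cong \SSS^1$.
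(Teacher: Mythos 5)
Your proof is correct and follows essentially the same route as the paper, whose entire proof is the one-liner ``Consider an appropriate iterate of the homeomorphism and apply Lemma~\ref{transit.homeo}.'' In fact your version is slightly cleaner: Lemma~\ref{transit.homeo} applies to $f$ itself (a disconnecting interval being in particular a free interval), forcing $X\cong\SSS^1$, and the contradiction with Definition~\ref{D:disc} is the purely topological fact that the circle has no cut points, so no passage to an iterate is needed.
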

\begin{proof}
Consider an appropriate iterate of the homeomorphism and apply Lemma~\ref{transit.homeo}.
\end{proof}

Of course in this corollary it is substantial that we speak on homeomorphisms
(the tent map is a transitive map on a continuum with a disconnecting interval).

\begin{proposition}\label{minimal.continua}
Let $X$ be a continuum with a free interval $J$ and let $f$ be a minimal map
on $X$. Then $X$ is a circle and $f$ is conjugate to
an irrational rotation.
\end{proposition}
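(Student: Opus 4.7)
The plan is to show that $f$ is a homeomorphism, after which Lemma~\ref{transit.homeo} immediately yields the conclusion. First, since $X$ is connected, by Lemma~\ref{tot.minimal} the map $f$ is totally minimal; in particular every iterate $f^n$ is again minimal. Moreover, $f$ can have no periodic points, since any periodic orbit would be a finite proper closed $f$-invariant subset of the infinite space $X$.

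To prove injectivity, I would suppose for contradiction that $f(a)=f(b)$ for some distinct $a,b\in X$. By Lemma~\ref{dense.backward}, the backward orbit of $a$ is dense in $X$, so for some $k\ge 0$ one can pick a $k$-step backward iterate $\tilde a$ of $a$ lying in $J$; choosing any corresponding $k$-step backward iterate $\tilde b$ of $b$, we get $\tilde a\ne\tilde b$ (else $a=b$) and $f^{k+1}(\tilde a)=f^{k+1}(\tilde b)$. Replacing $f$ by $g:=f^{k+1}$ (minimal by total minimality) and relabeling, I may henceforth assume that $a\in J$, $a\ne b$, and $f(a)=f(b)=y$.

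The next step is to exhibit a redundant open set for $f$, contradicting Lemma~\ref{red.open.set}. I would pick a small open subinterval $I\subseteq J$ containing $a$ whose closure in $X$ is a compact arc disjoint from $b$, together with an open neighborhood $U$ of $b$ with $\overline U\cap\overline I=\emptyset$. For $G\subseteq I$ a suitably small open subinterval containing $a$, the aim is $f(G)\subseteq f(X\setminus G)$: since $b\in X\setminus G$, the value $y$ is covered automatically; to cover the rest, I intend to invoke feeble openness (Lemma~\ref{feebly.open}) applied to $U$ together with the local Euclidean structure of $J$ at $a$, and conclude that $f(U)$ contains an open neighborhood of $y$ in $X$, so that $f(G)\subseteq f(U)\subseteq f(X\setminus G)$ once $G$ is chosen small enough.

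The main obstacle is precisely the step of showing that $f(U)$ genuinely contains an open neighborhood of $y$, not merely that it has nonempty interior somewhere. Overcoming this likely requires a delicate local analysis pairing the interval structure at $a$ with the unknown structure at $b$: one may need to restrict attention to the open set $J\cap f^{-1}(J)$ and argue, on its interval components, that the absence of periodic points together with the image constraints forces $f$ to be locally monotonic, hence locally open, and then transfer this local openness over to $b$ via the shared image $y$. Once injectivity is established, the continuous bijection $f$ of the compact Hausdorff space $X$ is automatically a homeomorphism, and Lemma~\ref{transit.homeo} (since minimal implies transitive) yields that $X$ is a circle and $f$ is conjugate to an irrational rotation.
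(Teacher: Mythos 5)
Your overall strategy matches the paper's: reduce to injectivity, then invoke Lemma~\ref{transit.homeo}; use Lemma~\ref{tot.minimal} and Lemma~\ref{dense.backward} to move the collision into the free interval; and derive a contradiction with Lemma~\ref{red.open.set} via a redundant open set. But there is a genuine gap exactly where you flag the ``main obstacle,'' and your proposed workaround does not close it. Feeble openness (Lemma~\ref{feebly.open}) only gives that $f(U)$ has nonempty interior \emph{somewhere}; it gives no control on \emph{where} that interior sits, so you cannot conclude that $f(U)$ contains a neighborhood of $y$, and hence cannot get $f(G)\subseteq f(U)$. The fallback of deducing ``local monotonicity, hence local openness'' from the absence of periodic points is both unjustified and beside the point: the point $b$ need not lie in $J$ (you only pulled the \emph{preimage} $a$ into $J$), so there is no interval structure at $b$ to exploit, and local openness at $a$ cannot be ``transferred'' to $b$ merely because they share the image $y$.

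The missing idea in the paper is to arrange that the \emph{common image} point also lies in $J$: after pulling back so that $a\in J$, one iterates forward and uses minimality to find $k$ with $c=f^{k}(a)=f^{k}(b)\in J$ (and $f^k(A),f^k(B)\subseteq J$ for small neighborhoods $A\ni a$, $B\ni b$). Then one splits $A\setminus\{a\}$ into two half-intervals $A_1,A_2$; each image $f^k(A_i)$ is a nondegenerate subinterval of $J$ whose closure contains $c$, hence contains a one-sided interval at $c$. Now a three-way case analysis applies: either $f^k(A_1)$ and $f^k(A_2)$ overlap, or they sit on opposite sides of $c$, in which case they cover a deleted neighborhood of $c$ and must overlap the image of a small enough neighborhood of $b$ (which by feeble openness has nonempty interior and, by continuity and compactness, is contained in an arbitrarily small neighborhood of $c$). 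Any such overlap produces a redundant open set $G=U\cap (f^k)^{-1}(L)$ for $f^k$, contradicting minimality of $f^k$. It is precisely this relocation of the merge point into $J$, together with the two-branch splitting at $a$, that lets the one-dimensional ``overlap'' argument replace the unobtainable claim that $f(U)$ contains a full neighborhood of $y$.
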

\begin{proof}
In view of Lemma~\ref{transit.homeo} it is sufficient to show that $f$ is one-to-one.
We proceed by contradiction. Suppose that
$f(x)=f(y)$ for some $x\ne y$. By Lemma~\ref{dense.backward} there are $n\in\NNN$ and
$a\in J$ with $f^n(a)=x$. Now use surjectivity of $f$ to find $b\in X$ with $f^n(b)=y$.
There is $k\ge n+1$ such that
$c=f^{k}(a)=f^{k}(b)\in J$. Notice that $a,b,c$ are pairwise distinct (because
$x\ne y$ and $f$ has no periodic point) and $a,c\in J$.
Moreover, by Lemma~\ref{tot.minimal}, $f^{k}$ is minimal.

Before proceeding further realize that the following claim holds.

\medskip

\noindent
\textit{Claim}. Let $Y$ be a topological space and $g:Y\to\RRR$ be a continuous map.
Let $U,V$ be two disjoint nonempty open sets in $Y$ such that $g(U)$ and $g(V)$ overlap
(i.e. there is an open interval $L$ with $L\subseteq g(U)\cap g(V)$).
Then there is a nonempty open set $G$ in $Y$ with $g(G)\subseteq g(Y\setminus G)$.

\medskip
The proof of the claim is obvious, just set $G=U\cap g^{-1}(L)$.

\medskip

To finish the proof of the proposition choose an open interval $a\in A\subseteq J$ and
an open neighborhood $B\ni b$ such that $A,B$ are disjoint and
both $f^k(A)$ and $f^k(B)$ are subsets of $J$.
The set $A\setminus \{a\}$ consists of two disjoint open intervals $A_1,A_2$
with $a$ being their common limit point.

Both $f^k(A_1)$ and $f^k(A_2)$ contain an open interval having the point $c$ as its left or right end point.
Moreover, by Lemma~\ref{feebly.open},
we have $\Int f^k(B')\neq\emptyset$ for every neighborhood $B'\subseteq B$ of $b$.
It follows that either the sets $f^k(A_1),f^k(A_2)$ overlap or one of them overlaps with $f^k(B)$.
In any case we may use the claim,
with $Y=A\cup B$ and $g=f^k|_Y$,
to find an open redundant set for $f^k$ which contradicts the minimality of $f^k$, see Lemma~\ref{red.open.set}.
\end{proof}

Now we are ready to prove \theoremABrefs{}. For reader's convenience we
repeat the statements here.

\theoremA
\begin{proof}
Since $f$ is minimal and $X$ has a component with nonempty interior
(namely the one containing $J$), $X$ has only finitely many components
$C_0,\dots, C_{n-1}$ and they are cyclically permuted by $f$. We may assume
that $C_0\supseteq J$ and $f(C_i)=C_{i+1(\text{mod}\,n)}$ for all $0\leq i\leq n-1$.
Since $f^n|_{C_0}$ is minimal and $C_0$ is a
continuum with a free interval $J$, we have, by Proposition~\ref{minimal.continua},
that $C_0$ is a circle and $f^n|_{C_0}$ is conjugate
to an irrational rotation. Now fix $i\in\{1,\dots,n-1\}$.
Since $f^n:C_0\to C_0$ is a homeomorphism, it follows that also
$f^i:C_0\to C_i$ is a homeomorphism and hence it is a conjugacy between
$f^n|_{C_0}$ and $f^n|_{C_i}$. So all the maps $f^n|_{C_i}$ ($i=0,\dots,n-1$)
are conjugate to the same irrational rotation which completes
the proof.
\end{proof}

\theoremB
\begin{proof}
Trivially the three cases are mutually exclusive.
Assume first that $M\cap J$ contains an arc. Then $M$ is a compact metrizable space
with a free interval admitting a minimal map $f|_M$. By \theoremAref{},
$M$ is a disjoint union of finitely many circles and so we have (3).

Now we assume that $M\cap J$ contains no arc and we show that in this case
either (1) or (2) holds. Clearly, $M\cap J$ is totally disconnected.
If $M\cap J$ has an isolated point then $M$ has an isolated point
and so $M$, being a minimal set, is finite. So assume that $M\cap J$ is dense
in itself. We prove (2).
The set $M$, being an infinite minimal set, has no isolated point.
Thus to show that $M$ is a cantoroid we only need to prove that the union of
all degenerate components of $M$ is dense in $M$.
By Lemma~\ref{unique.preimage}, the set $R$ of all points $x\in M$ which have a unique
preimage in $M$ is residual in $M$. By Lemma~\ref{image.residual} all the images $f^n(R)$ of $R$
are residual in $M$. Since $M\cap J$ is a nonempty open subset of $M$
it necessarily contains a point $z_0$ from the residual (in $M$) set $\bigcap_{n=0}^{\infty}f^n(R)$.
Since $z_0\in M\cap J$, it is a degenerate component of $M$ and since
$z_0\in \bigcap_{n=0}^{\infty}f^n(R)$, it has a unique backward orbit $\{z_n\}_{n=0}^\infty$ with respect to  $f|_M$.
By Lemma~\ref{dense.backward}, this orbit is dense in $M$.
Since $z_0$ is a component of $M$, the singleton
$z_n=(f|_M)^{-n}(z_0)$, being a union of components of $M$, is itself a component of $M$.
Hence degenerate components of $M$ are dense in $M$ and so $M$ is a cantoroid.

It remains to show that $M$ is nowhere dense in $X$. Suppose, on the contrary,
that the (closed) set $M$ contains a set $U$ which is nonempty and open in $X$.
Fix an arc $A\subseteq J$ containing a nonempty open subset
of $M$ (in the topology of $M$). Since $M$ is minimal for $f$, there is $n\in\NNN$ such that
$\bigcup_{i=0}^nf^i(A)\supseteq M\supseteq U$. Since the sets $f^i(A)$ are
closed there is $j$ such that $f^j(A)\supseteq V$ for some nonempty open set $V\subseteq U$.
The set $f^j(A)$, being a continuous image of an arc, is locally connected.
Therefore we may assume that $V$ is connected. Since $M$ is minimal and intersects
$J$, there is $k\in\NNN$ with $f^k(V)\cap J\neq\emptyset$. Since the
components of $M\cap J$ are singletons, it follows that
$f^k(V)$ is also a singleton. This implies that $M$ is finite, contradicting
our assumptions.
\end{proof}

\section{Transitivity and dense periodicity}\label{S:dense-periodicity}

\theoremCref{} will follow from Lemma~\ref{transit.homeo} and from three other lemmas,
each of which will deal with some `partial' dichotomy
for transitive maps (we call them partial because Theorem~C called
a ``Dichotomy for transitive maps'' combine all of them). In this section we prove the first of these three lemmas.

\begin{lemma}\label{dichotomy1}
Let $X$ be a compact metrizable space with a free interval $J$ and let
$f:X\to X$ be a transitive map. Then exactly one of the following
two statements holds.
\begin{enumerate}
\item[(1)] The periodic points of $f$ form a dense subset of $X$.
\item[(2)] The space \rotationX{}.
\end{enumerate}
\end{lemma}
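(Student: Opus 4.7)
The plan is to reduce the problem to minimality and then apply Theorem~A. The two cases are easily seen to be mutually exclusive: in case~(2), $f$ cyclically permutes the circles and $f^n$ is an irrational rotation on each component, so $\Per(f)=\emptyset$, which is not dense in the infinite space $X$. So I would assume $\Per(f)$ is not dense in $X$ and aim to show that $f$ is minimal.

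The central observation is the following. For any $y\in J$, consider its orbit closure $S_y:=\overline{\Orb_f(y)}$, which is a nonempty closed $f$-invariant subset of $X$ containing~$y$. If $y$ were intransitive, i.e.\ $S_y\ne X$, then by transitivity of $f$ any proper closed $f$-invariant subset is nowhere dense, so $S_y$ would be a nowhere dense closed invariant set meeting~$J$. Lemma~\ref{S.impl.per} would then force $\Per(f)$ to be dense in $X$, contradicting our assumption. Hence every $y\in J$ is a transitive point, and so $\intr(f)\cap J=\emptyset$. To upgrade this to full minimality I would appeal to Lemma~\ref{intr.pts}: since $X$ contains $J\cong\RRR$ it is infinite, and transitivity rules out any isolated point in $X$, so Lemma~\ref{intr.pts} gives that $\intr(f)$ is either empty or dense in $X$. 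As $\intr(f)$ is disjoint from the nonempty open set $J$, it cannot be dense; hence $\intr(f)=\emptyset$ and $f$ is minimal. Theorem~A applied to $(X,f)$ then yields precisely case~(2).

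The main delicate step is bridging ``$\Per(f)$ is not dense'' to ``$f$ is minimal''. The orbit closures of points in $J$ turn out to be exactly the right test sets: they are automatically nonempty, closed and $f$-invariant, and whenever proper they are nowhere dense by transitivity, fitting precisely the hypothesis of Lemma~\ref{S.impl.per}. Once minimality is established, Theorem~A immediately provides the structural description of case~(2).
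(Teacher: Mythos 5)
Your proposal is correct and follows essentially the same route as the paper: both arguments hinge on taking the orbit closure of an intransitive point of $J$ as a nowhere dense closed invariant set meeting $J$, feeding it to Lemma~\ref{S.impl.per}, and otherwise invoking Lemma~\ref{intr.pts} and Theorem~A. The only difference is organizational (you argue contrapositively from ``$\Per(f)$ not dense'' toward minimality, while the paper splits on whether $\intr(f)$ is empty or dense), and your explicit check of mutual exclusivity is a harmless addition.
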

\begin{proof}
By Lemma~\ref{intr.pts} the set $\intr(f)$ of intransitive points of $f$ is
either empty or dense. If it is empty, then $(X,f)$ is minimal and \theoremAref{}
gives (2).

So assume that the set $\intr(f)$ is dense in $X$ and fix $x_0\in
J\cap\intr(f)$. Denote by $S$ the closure of the orbit of $x_0$
under $f$. Clearly, $S$ is a nonempty closed invariant set. Furthermore,
we have $S\subseteq\intr(f)$. Indeed, if $S$ contained a transitive
point $y_0$ then, being invariant, it would contain the orbit of $y_0$
and hence it would be dense in $X$, contradicting the fact that $x_0$
is not transitive. So $S\subseteq\intr(f)$ and, since $\tr(f)$ is
dense in $X$, the (closed) set $S$ is nowhere dense in $X$.
Lemma~\ref{S.impl.per} then gives (1).
\end{proof}

By \cite{DY}, a dynamical system $(X,f)$ is called a \emph{ToM-system} if $X$ is a compact metrizable space, $f$ is transitive,
not minimal, and every point of $X$ is either transitive or minimal
(a point is called \emph{minimal} if it belongs to a minimal set).
If in such a system every point is either transitive or periodic,
the system is called a \emph{ToP-system}.
Notice that if $(X,f)$ is a ToM-system then $X$ has no isolated point.
(Otherwise, by transitivity of $f$, the system would be just one periodic orbit,
so it would be minimal.)

\begin{corollary}\label{no.ToP.sys}
Let $X$ be a compact metrizable space with a free interval $J$. Then
there are no ToM-systems on $X$.
\end{corollary}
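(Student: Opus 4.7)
The plan is to prove the corollary by contradiction: assume $(X,f)$ is a ToM-system on $X$, a compact metrizable space with a free interval $J$, and then produce a point that is neither transitive nor minimal.

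First I would invoke Lemma~\ref{dichotomy1} applied to the transitive map $f$. Case~(2) of that dichotomy asserts that $X$ is a disjoint union of finitely many circles cyclically permuted by $f$, with $f^n$ conjugate on each circle to the same irrational rotation. But such an $f$ is minimal, contradicting the hypothesis that a ToM-system is not minimal. So we are forced into case~(1): the set $\Per(f)$ is dense in $X$.

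Next, since periodic points are dense and $f$ is transitive on a compact metrizable space with a free interval, Lemma~\ref{dense.ev.per} applies and furnishes a point $x\in X$ that is eventually periodic but not periodic. Write $f^k(x)=:y$ with $y$ periodic of period $p$, and note $x\notin \Orb_f(y)$. I would then argue that $x$ is neither transitive nor minimal, which is the decisive step. On one hand, the space $X$ is infinite (it contains a free interval), and the orbit of $x$ is contained in the finite set $\{x,f(x),\dots,f^{k-1}(x)\}\cup \Orb_f(y)$, so $\Orb_f(x)$ is not dense in $X$ and hence $x\notin \tr(f)$. On the other hand, the orbit closure $\overline{\Orb_f(x)}=\Orb_f(x)$ is finite, and any nonempty closed invariant subset of it must contain the periodic orbit $\Orb_f(y)$; thus $\Orb_f(y)$ is the unique minimal subset of $\overline{\Orb_f(x)}$. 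Since $x\notin \Orb_f(y)$ and every minimal set containing $x$ would have to lie inside $\overline{\Orb_f(x)}$, the point $x$ belongs to no minimal set.

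This produces a point of $X$ that is neither transitive nor minimal, contradicting the definition of a ToM-system and completing the proof. The only nontrivial step is the existence of the eventually periodic non-periodic point, and that is precisely what Lemma~\ref{dense.ev.per} supplies; the remainder is a routine verification that such a point cannot be minimal since its orbit closure is finite and does not coincide with the periodic orbit it eventually lands on.
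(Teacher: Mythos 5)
Your proof is correct and takes essentially the same route as the paper's: it applies Lemma~\ref{dichotomy1} to rule out case~(2) (which would force minimality) and obtain dense periodic points, then uses Lemma~\ref{dense.ev.per} to produce an eventually periodic, non-periodic point, which is neither transitive nor minimal. The paper leaves that last verification implicit; you spell it out, but the argument is the same.
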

\begin{proof}
Suppose that $(X,f)$ is a ToM-system.
Since $f$ is transitive and not minimal, by Lemma~\ref{dichotomy1} it has a dense set of periodic
points. By Lemma~\ref{dense.ev.per}, $f$ has a dense set of points
which are eventually periodic but not periodic, a contradiction.
\end{proof}

\section{Transitivity and topological entropy}\label{S:entropy}

The following result should be well known but we have not found it explicitly
and so we include a proof.

\begin{lemma}\label{horseshoe}
Let $X$ be a compact metrizable space and $f:X\to X$ a continuous
map. Assume that there exist nonempty closed pairwise disjoint
subsets $A_1,\dots ,A_m$ of $X$ and a positive integer $k$
such that for every $i\in \{ 1,\dots ,m\}$ there is $p_i\in
\NNN$ with
$$
\card \{ j\in \{ 1,\dots ,m\}:\, f^{p_i}(A_i)
\supseteq A_j\} \geq k.
$$
Then for $p=\max \{ p_1,\dots ,p_m\}$ we have
$h(f)\geq (1/p)\log k$.
\end{lemma}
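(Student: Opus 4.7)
The plan is to build a symbolic horseshoe on the sets $A_1,\dots,A_m$ and count a $(t,\delta)$-separated set of exponential size in $t$. Since the $A_i$ are closed, pairwise disjoint subsets of a compact metrizable space, they lie at a uniform positive distance apart: set $\delta=\min_{i\ne j}\dist(A_i,A_j)>0$. For each $i$ fix a subset $\Sigma(i)\subseteq\{1,\dots,m\}$ of cardinality exactly $k$ such that $f^{p_i}(A_i)\supseteq A_j$ for every $j\in\Sigma(i)$. Call a finite sequence $(i_0,i_1,\dots,i_{n-1})$ \emph{admissible} if $i_{r+1}\in\Sigma(i_r)$ for every $r<n-1$; for such a sequence set the time stamps $T_0=0$ and $T_r=p_{i_0}+p_{i_1}+\dots+p_{i_{r-1}}$, so that $T_r\le rp$.

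Next I would realize every admissible sequence by a genuine orbit. Given an admissible $(i_0,\dots,i_{n-1})$, define closed sets $B_r\subseteq A_{i_r}$ by downward induction, starting with $B_{n-1}=A_{i_{n-1}}$ and setting
\[
B_r=A_{i_r}\cap f^{-p_{i_r}}(B_{r+1})\qquad(r=n-2,n-3,\dots,0).
\]
Admissibility gives $f^{p_{i_r}}(A_{i_r})\supseteq A_{i_{r+1}}\supseteq B_{r+1}$, so each $B_r$ is nonempty and compact. Pick any $x=x(i_0,\dots,i_{n-1})\in B_0$; then by construction $f^{T_r}(x)\in A_{i_r}$ for all $0\le r\le n-1$.

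Now compare two different admissible sequences. If they first differ at position $r$, then $T_r$ is the same for both (it only depends on $i_0,\dots,i_{r-1}$), and the two chosen points are sent by $f^{T_r}$ into different $A_j$'s, hence are at distance at least $\delta$. The same is true at time $0$ when the sequences already differ in $i_0$. Because $T_r\le (n-1)p$ for every $r<n$, the family of realized points is $((n-1)p+1,\delta)$-separated in the Bowen sense. Since there are $m\cdot k^{n-1}$ admissible sequences of length $n$, Bowen's definition gives
\[
h(f)\ge\limsup_{n\to\infty}\frac{\log\bigl(mk^{n-1}\bigr)}{(n-1)p+1}=\frac{\log k}{p},
\]
which is the desired inequality.

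The only mild obstacle is that the jump times $p_{i_r}$ depend on the current symbol, so the times $T_r$ for different itineraries are not synchronised. I handle this by comparing two admissible sequences only up to the \emph{first} place where they disagree: up to that index the itineraries, hence the time stamps $T_r$, agree, which makes the separation argument work cleanly at the common time $T_r\le(n-1)p$. Everything else (existence of $\delta$, nonemptiness of the inductive intersections, and the final limit) is routine.
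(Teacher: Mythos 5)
Your proof is correct and follows essentially the same route as the paper's: both encode realizable/admissible itineraries on the $A_i$, realize each by a point, and show these points form a $(\sim np,\delta)$-separated set of cardinality about $mk^{n-1}$, yielding $h(f)\ge(1/p)\log k$. You merely spell out two steps the paper leaves implicit (the backward-induction nonemptiness of the itinerary sets and the first-disagreement separation argument); the only cosmetic point is that, under the usual strict-inequality convention for $(t,\varepsilon)$-separation, one should take $\varepsilon$ strictly smaller than $\min_{i\ne j}\dist(A_i,A_j)$, as the paper does.
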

\begin{proof}
Call a finite sequence $\overline{s}=(s_0,\dots ,s_n)$ ($n\geq 0$) of elements
of the set $\{ 1,\dots ,m\}$ realizable
if $f^{p_{s_i}}(A_{s_i})\supseteq A_{s_{i+1}}$ for $0\leq i\leq
n-1$. For such a sequence $\overline{s}$  the set
$$
 A_{\overline{s}}
 =\{
 x\in A_{s_0}:\ f^{p_{s_0}}(x)\in A_{s_1}, f^{p_{s_0}+p_{s_1}}(x)\in A_{s_2}, \dots, f^{p_{s_0}+\dots + p_{s_{n-1}}}(x)\in A_{s_n}
 \}
$$
is obviously nonempty and so one can fix $x_{\overline{s}}
\in A_{\overline{s}}$. For $n\in \NNN$ put
$$
 E_n = \{x_{\overline{s}}:\ \overline{s}=(s_0,\dots,s_n) \text{ is realizable} \}.
$$
By our assumptions $\card E_n\geq m k ^n\geq k ^{n+1}$. Choose $0<\varepsilon<
\min_{i\ne j} \dist (A_i,A_j)$. It follows that the set $E_n$ is
$(np+1,\varepsilon)$-separated for $f$. Consequently, by Bowen's definition of topological entropy,
$$
h(f)\geq \limsup_{n\to \infty}\frac{1}{np+1}\log (\card E_n)\geq
\limsup_{n\to \infty}\frac{n+1}{np+1}\log k =\frac{1}{p}\log k,
$$
as desired.
\end{proof}

\begin{lemma}\label{weak-general}
Let $X$ be a compact metrizable space with a free interval $J$. Then every weakly mixing map
$f$ on $X$ has positive topological entropy.
\end{lemma}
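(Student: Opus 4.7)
The plan is to combine the dichotomy of Lemma~\ref{dichotomy1} (to get dense periodic points), a reduction in the spirit of the proof of Lemma~\ref{perpt.freeint} (to produce a setting with a fixed point of some iterate in $J$ and a disconnecting-interval-like connected invariant subspace), and a horseshoe construction driven by weak mixing that feeds into Lemma~\ref{horseshoe}.

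First, since $f$ is weakly mixing it is transitive, and irrational rotations on finite unions of circles are equicontinuous and therefore not weakly mixing; so case~(2) of Lemma~\ref{dichotomy1} is excluded and $\Per(f)$ is dense in $X$. In particular $\Per(f)\cap J$ is dense in $J$. Following the reduction from the proof of Lemma~\ref{perpt.freeint}, set $Y=\bigcup_{n\ge 0}f^n(J)$, let $Y_0$ be the component of $Y$ containing $J$, and let $p\ge 1$ be such that $f^p(Y_0)\subseteq Y_0$ and $f^p|_{Y_0}$ is transitive. The same separation argument used in that proof (closed subarcs of $J$ are closed in $X$ because $X$ is Hausdorff) shows that $J$ is a disconnecting interval of $Y_0$. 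Pick a periodic point $q\in J$ (given by $\Per(f)\cap J$ or by Lemma~\ref{denseperiod} applied to $Y_0$); with $N$ a common multiple of $p$ and the period of $q$, set $g:=f^N$. Then $g(q)=q$, $g(Y_0)\subseteq Y_0$, and $g$ is weakly mixing on $X$ (weak mixing passes to iterates, since $f\times f$ transitive implies $(f\times f)^N=g\times g$ transitive). Because $h(g)=N\cdot h(f)$, it suffices to show $h(g)>0$.

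Now pick disjoint closed arcs $I_1\prec q\prec I_2$ in $J$ and further disjoint open subarcs $V_-\prec I_1$, $V_+\succ I_2$ of $J$. By weak mixing, the product system $(X^4,\,g\times g\times g\times g)$ is transitive, so there exists $n\ge 1$ realizing the four conditions $g^n(\Int I_i)\cap V_\pm\ne\emptyset$ simultaneously (for $i\in\{1,2\}$ and both signs). Each $g^n(I_i)$ is connected (continuous image of the arc $I_i$) and lies in $Y_0$ (since $g$ preserves $Y_0$), and it meets both $V_-$ and $V_+$. For every $x\in J$ with $V_-\prec x\prec V_+$, the disconnecting property of $J$ in $Y_0$ splits $Y_0\setminus\{x\}$ into two components containing $V_-$ and $V_+$ respectively, forcing the connected set $g^n(I_i)$ to contain $x$. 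Hence $g^n(I_i)\supseteq I_1\cup I_2$ for both $i=1,2$, and Lemma~\ref{horseshoe} applied with $A_1=I_1$, $A_2=I_2$, $p_1=p_2=n$ and $k=2$ gives $h(g)\ge(\log 2)/n>0$.

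The main obstacle is verifying that $J$ is a disconnecting interval of $Y_0$ in this setting. The argument in the proof of Lemma~\ref{perpt.freeint} exploited a specific periodic point lying outside $Y$, a feature that may not be available here because $\Per(f)$ is dense in $J$; so one must either tailor the choice of $Y_0$ (e.g.\ by shrinking $J$ so that the orbit union avoids a suitable separating point) or supply an alternative separation argument adapted to the weakly mixing setting. Once this reduction is in hand, the weak mixing hypothesis delivers the horseshoe covering almost mechanically, and the rest is an application of Lemma~\ref{horseshoe}.
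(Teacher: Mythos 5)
Your horseshoe endgame is the right general idea, but the reduction it rests on --- that $J$ is a disconnecting interval of the component $Y_0$ of $Y=\bigcup_{n\ge0}f^n(J)$ containing $J$ --- is not merely unproven, as you yourself flag, but false in general. Take $X=\SSS^1$ and $f$ the doubling map, which is weakly mixing: then $f^n(J)=\SSS^1$ for all large $n$, so $Y_0=\SSS^1$ and no point of $J$ cuts $Y_0$. The separation trick in the proof of Lemma~\ref{perpt.freeint} works only because there the anchor point $x_0$ lies \emph{outside} $Y$; in your setting $J\subseteq Y$ by construction, so no such anchor exists, and no shrinking of $J$ rescues the circle example. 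Your two-arc horseshoe ($I_1,I_2$ with outer gaps $V_\pm$) needs the disconnecting property to force a connected set meeting both $V_-$ and $V_+$ to sweep through everything between them, and this is exactly what fails: on the circle a connected set can join $V_-$ to $V_+$ ``the other way around'' and miss $I_1\cup I_2$ entirely. So the argument collapses at the very obstacle you named, and the proposed repairs do not exist.

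The paper sidesteps all of this with a purely local separation performed in $X$ itself, needing no invariant subspace, no periodic points and no disconnecting interval. The key assertion is: if a connected set $C$ contains three points $x_1\prec x_2\prec x_3$ of $J$, then it contains $[x_1,x_2]$ or $[x_2,x_3]$ --- otherwise pick $y_1\in(x_1,x_2)\setminus C$ and $y_2\in(x_2,x_3)\setminus C$; then $(y_1,y_2)$ (open in $X$ because $J$ is) and $X\setminus[y_1,y_2]$ (open because $[y_1,y_2]$ is compact and $X$ is Hausdorff) separate $C$, a contradiction. One then takes \emph{three} disjoint arcs $A_1,A_2,A_3$ in $J$, whose complement in $J$ has four open gaps $U_1,\dots,U_4$; weak mixing gives a single $p$ with $f^p(A_i)\cap U_j\ne\emptyset$ for all $i,j$, the assertion forces each connected set $f^p(A_i)$ to contain at least two of the three arcs, and Lemma~\ref{horseshoe} yields $h(f)\ge(1/p)\log 2$. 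Note also that your appeal to Lemma~\ref{dichotomy1} for dense periodic points is unnecessary on this route, and your justification that $g\times g$ is transitive because it is an iterate of the transitive map $f\times f$ is not valid as stated (transitivity does not pass to iterates in general), although the fact that iterates of weakly mixing maps are weakly mixing is standard. The missing idea is precisely the three-point separation assertion; without it or a substitute, your proof does not close.
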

\begin{proof}
We prove the following
assertion:
\begin{enumerate}
\item[(A)] If a connected set $C\subseteq X$ contains
 three distinct points $x_1,x_2,x_3\in J$, then it contains at least one of the (non-degenerate) subarcs of $J$
 whose end points are in $\{x_1,x_2,x_3\}$.
\end{enumerate}
We may assume that $x_1\prec x_2\prec x_3$. Suppose that
$C$ contains neither $\langle x_1,x_2\rangle$ nor $\langle x_2,x_3\rangle$.
Fix $y_1\in \langle x_1,x_2\rangle \setminus C$ and $y_2\in \langle
x_2,x_3\rangle \setminus C$. Then the sets $(y_1,y_2)$ and $X\setminus \langle y_1,y_2\rangle$
constitute a separation of $C$ in $X$ which is a contradiction.

Now we prove the lemma. Fix three disjoint arcs $A_1,A_2,A_3$ in $J$. Then the
set $J\setminus \bigcup _{i=1}^3A_i$ has four components $U_j$ ($1\leq j\leq 4$)
which are open in $X$. Since $f$ is weakly mixing, there is
$p\in \NNN$ satisfying $f^{p}(A_i)\cap U_j\neq \emptyset$ for every $i,j$.
The Assertion (A) now secures that for every $i$ the connected set $f^{p}(A_i)$ contains at
least two of the sets $A_1,A_2,A_3$. Thus, by Lemma~\ref{horseshoe},
$h(f)\geq (1/p) \log 2>0$.
\end{proof}

We get the second partial dichotomy for transitive maps.

\begin{lemma}\label{dichotomy2}
Let $X$ be a compact metrizable space with a free interval $J$ and let
$f:X\to X$ be a transitive map. Then exactly one of the following
two statements holds.
\begin{enumerate}
\item[(1)] The topological entropy of $f$ is positive.
\item[(2)] The space \rotationX{}.
\end{enumerate}
\end{lemma}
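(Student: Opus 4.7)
The plan is to combine Lemma~\ref{dichotomy1} with the terminal RPD machinery from Section~\ref{S:rpd} and the entropy bound in Lemma~\ref{weak-general}. First I would invoke Lemma~\ref{dichotomy1}: either $X$ lands in case~(2) and we are done, or $\Per(f)$ is dense in $X$. The two conclusions are mutually exclusive, since an irrational rotation on a circle has zero topological entropy, so it remains to show that dense periodicity forces $h(f)>0$.

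Assume $\Per(f)$ is dense. Since $J$ is nonempty and open, there is a periodic point of $f$ in $J$, and Lemma~\ref{finite.DI} supplies a terminal regular periodic decomposition $\mathcal{D}=(D_0,\dots,D_{m-1})$ for $f$. By (RPD2) the union $\bigcup_i \Int(D_i)$ is dense in $X$, hence some $\Int(D_i)$ meets $J$; let $J'$ be an open subinterval of $J$ contained in $\Int(D_i)$. Then $J'$ is a free interval of the compact metrizable space $D_i$. Set $g:=f^m|_{D_i}$.

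The next step is to verify that $g$ is weakly mixing. Total transitivity of $g$ is precisely the content of Lemma~\ref{termin.RPD}. For dense periodicity of $g$: if $x\in \Int(D_i)$ is a periodic point of $f$ with period $p$, then (RPD4) places $x$ in $\Int(D_{i-p\,(\mathrm{mod}\,m)})$, so (RPD1) forces $m\mid p$, and hence $x$ is a periodic point of $g$. Since $\Per(f)$ is dense in $X$ and $\Int(D_i)$ is dense in $D_i$ (as $D_i$ is regular closed), the set $\Per(g)$ is dense in $D_i$. Lemma~\ref{TT+DPimplWM} then yields weak mixing of $g$, and Lemma~\ref{weak-general} applied to $g$ on $D_i$ gives $h(g)>0$. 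Because $D_i$ is closed and $f^m$-invariant, $h(f^m)\ge h(g)$, and the standard identity $h(f^m)=m\cdot h(f)$ delivers $h(f)>0$.

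I do not expect any genuine obstacle: every ingredient has already been prepared in the preceding sections. The only points needing care are confirming that the chosen element $D_i$ of the terminal decomposition actually inherits a free interval from $J$ and inherits dense periodicity from $f$; both follow routinely from the basic properties (RPD1), (RPD2), (RPD4). The mild verification that the two alternatives are mutually exclusive is immediate once one recalls that irrational rotations have zero entropy.
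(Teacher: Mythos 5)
Your proposal is correct and follows essentially the same route as the paper: Lemma~\ref{dichotomy1} to get dense periodic points, Lemma~\ref{finite.DI} for a terminal RPD, Lemma~\ref{termin.RPD} for total transitivity of $f^m$ on the piece meeting $J$, Lemma~\ref{TT+DPimplWM} for weak mixing, and Lemma~\ref{weak-general} for positive entropy. Your extra verifications (the periodic point in $J$ needed to invoke Lemma~\ref{finite.DI}, and the $(\mathrm{RPD}1)$--$(\mathrm{RPD}4)$ argument that periodic points of $f$ in $\Int(D_i)$ are periodic for $f^m|_{D_i}$) are sound, merely more detailed than the paper's one-line justifications.
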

\begin{proof}
Assume that $f$ does not satisfy condition (2). Then, by Lemma~\ref{dichotomy1},
the set of periodic points of $f$ is dense in $X$.
By Lemma~\ref{finite.DI} there is a terminal regular periodic decomposition
$\mathcal D=(D_0,\dots,D_{m-1})$ for $f$. We may assume that $D_0$ intersects $J$ and so
$D_0$ is compact metrizable with a free interval. By Lemma~\ref{termin.RPD}
the map $f^m|_{D_0}$ is totally transitive.
Since the periodic points of $f$ are dense in $X$ and
$D_0$ is regular closed in $X$, it follows that
the map $f^m|_{D_0}$ has dense periodic
points and, by Lemma~\ref{TT+DPimplWM}, is weakly  mixing.
It follows from Lemma~\ref{weak-general} that $h(f^m|_{D_0})>0$ and so $h(f)>0$.
\end{proof}

\section{Transitivity and strong mixing. Proof of Theorem~C}\label{S:mixing}

To prove \theoremCref{} we will need the following, already the third, partial
dichotomy for transitive maps.
(Recall that $f$ is relatively strongly mixing if $f$ has an RPD $(D_0,\dots, D_{m-1})$
such that $f^m|_{D_i}$ is strongly mixing for every $i$.)

\begin{lemma}\label{dichotomy3}
Let $X$ be a compact metrizable space with a free interval $J$ and let
$f:X\to X$ be a transitive map. Then exactly one of the following
two statements holds.
\begin{enumerate}
\item[(1)] The map $f$ is relatively strongly mixing.
\item[(2)] The space \rotationX{}.
\end{enumerate}
\end{lemma}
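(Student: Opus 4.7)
The plan mirrors the proof of Lemma~\ref{dichotomy2}, invoking Lemma~\ref{TT+DPimplSM} in place of Lemma~\ref{weak-general}, and then propagating strong mixing from one block of the terminal RPD to all the others. Mutual exclusivity is straightforward: in case~(2) every block of any RPD for $f$ is a union of entire circles, and some iterate of $f^m$ returns each circle to itself as an irrational rotation, which is equicontinuous and hence never strongly mixing; so~(1) fails.

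Assume now that $f$ does not satisfy~(2). By Lemma~\ref{dichotomy1} the set $\Per(f)$ is dense in $X$, and by Lemma~\ref{finite.DI} the map $f$ admits a terminal RPD $\mathcal D=(D_0,\dots,D_{m-1})$. Some $\Int(D_i)$ meets $J$ since the interiors are jointly dense in $X$; after cyclic reindexing take $i=0$. Any connected component of $J\cap\Int(D_0)$ is homeomorphic to $\RRR$ and open in $D_0$, so $D_0$ is a compact metrizable space endowed with a free interval in its own right. Setting $g:=f^m|_{D_0}$, Lemma~\ref{termin.RPD} provides total transitivity of $g$, while the regular-closedness $D_0=\overline{\Int(D_0)}$ combined with the density of $\Per(f)$ in $X$ yields density of $\Per(g)$ in $D_0$. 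Lemma~\ref{TT+DPimplSM} then gives that $g$ is strongly mixing.

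It remains to carry strong mixing across the other blocks. For each $i\in\{1,\dots,m-1\}$ the continuous map $\varphi_i:=f^i|_{D_0}\colon D_0\to D_i$ satisfies $\varphi_i\circ g=(f^m|_{D_i})\circ\varphi_i$ and, by~(RPD3), has image dense in $D_i$. For nonempty open $U,V\subseteq D_i$ the preimages $\varphi_i^{-1}(U)$ and $\varphi_i^{-1}(V)$ are thus nonempty and open in $D_0$, and strong mixing of $g$ together with the intertwining relation forces $(f^m|_{D_i})^n(U)\cap V\neq\emptyset$ for all sufficiently large $n$. Hence $\mathcal D$ witnesses that $f$ is relatively strongly mixing. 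The main delicate step is certifying that $D_0$, as a subspace of $X$, carries a free interval in the sense of Definition~\ref{D:free}; this hinges on the openness of $J$ in $X$ and its local connectedness, which together guarantee that each component of $J\cap\Int(D_0)$ is both open in $D_0$ and homeomorphic to $\RRR$. A secondary concern is the lack of surjectivity of $\varphi_i$ in~(RPD3), but denseness of its image suffices for the semiconjugacy-style transfer.
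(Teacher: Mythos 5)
Your proof is correct and follows essentially the same route as the paper: Lemma~\ref{dichotomy1} gives dense periodic points, Lemma~\ref{finite.DI} gives a terminal RPD, Lemma~\ref{termin.RPD} plus regular-closedness gives total transitivity and dense periodicity of $f^m|_{D_0}$, and Lemma~\ref{TT+DPimplSM} gives strong mixing, which then passes to the other blocks via the dense-image semiconjugacy $f^i|_{D_0}$ from (RPD3). The paper compresses the last step into the phrase ``being a factor of $f^m|_{D_0}$''; your explicit verification of the transfer (and of the free interval in $D_0$) is exactly what that phrase abbreviates.
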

\begin{proof}
The proof is word by word the same as that of Lemma~\ref{dichotomy2}
only at the very end of it we use Lemma~\ref{TT+DPimplSM}
instead of Lemma~\ref{weak-general}, to get that $f^m|_{D_0}$ is strongly mixing.
Then every $f^m|_{D_i}$, being a factor of $f^m|_{D_0}$ by (RPD3), is also strongly mixing.
\end{proof}

\theoremC{}

\begin{proof}
Assume that (2) does not hold. Then $f$ is relatively strongly mixing by
Lemma~\ref{dichotomy3}, has positive entropy by Lemma~\ref{dichotomy2}
and has dense periodic points by Lemma~\ref{dichotomy1}. Finally, $f$ is non-invertible by
Lemma~\ref{transit.homeo}.
\end{proof}

\newcommand{\downf}[1]{\stackrel{{#1}}{\searrow}} 
\newcommand{\Pgf}[2]{\operatorname{P}_{#1,#2}}

\section*{Appendix~1}\label{S:appendix}

As we promised in Section~\ref{S:periodic-recurrent},
we prove here Theorem~\ref{per.rec.pts2}.
For maps $f$ and $g$, each of them being defined on a (possibly different) subset of $X$
and with values in $X$, put
$$
 \Pgf{g}{f}=\Fix(f)\cup \Fix(g) \cup
 \bigcup_{k\ge 1} \Fix(g^k\circ f).
$$
For an interval map $\varphi$ we will write $[\alpha,\beta]\downf{\varphi} [\gamma,\delta]$ if
$\varphi([\alpha,\beta])=[\gamma,\delta]$, $\varphi(\alpha)=\delta$ and $\varphi(\beta)=\gamma$.
In the proof of the next lemma we will use the following simple observation:
If $\varphi([\alpha',\beta'])\supseteq[\gamma,\delta]$, $\varphi(\alpha')=\delta$ and $\varphi(\beta')=\gamma$
then there are $\alpha'\le \alpha < \beta\le \beta'$ with
$[\alpha,\beta]\downf{\varphi} [\gamma,\delta]$.

\begin{lemma}\label{L:recper1}
Let $a<d$ and $b,c\in(a,d)$ be reals and let
$f:[a,b]\to[a,d]$ and $g:[c,d]\to[a,d]$ be interval maps such that
$f(a)=d$, $g(c)=a$ and $g(d)\in f([a,b])$.
Then $\Pgf{g}{f}\ne\emptyset$.
\end{lemma}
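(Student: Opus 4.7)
First I reduce to the case $\Fix(f) = \Fix(g) = \emptyset$; otherwise $\Pgf{g}{f}$ is already nonempty. In this case continuity, $f(a) = d > a$, and the intermediate value theorem force $f(x) > x$ for all $x \in [a, b]$ and, symmetrically, $g(x) < x$ for all $x \in [c, d]$. The task is then to produce a fixed point of some $g^k \circ f$ with $k \ge 1$. I split on whether $f$ attains the value $c$ on $[a, b]$.

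In the easy case, some $x_0 \in [a, b]$ has $f(x_0) \le c$. Letting $a' := \min\{x \in [a, b] : f(x) = c\}$, minimality of $a'$ combined with $f \le d$, $f(a) = d$, and $f(a') = c$ yields $[a, a'] \downf{f} [c, d]$. Then $h := g \circ f$ is defined and continuous on $[a, a']$ with $h(a) = g(d) \ge a$ and $h(a') = g(c) = a < a'$, so IVT delivers a fixed point of $g \circ f$.

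The hard case is $f > c$ throughout $[a, b]$, i.e.\ $f([a, b]) \subseteq (c, d]$. I iterate $g$ starting from $d$: set $y_0 = d$ and $y_k = g(y_{k-1})$ as long as $y_{k-1} \in [c, d]$. Since $g(x) < x$ on $[c, d]$, the sequence is strictly decreasing while defined, and since $g$ has no fixed point in $[c, d]$ it cannot remain there forever; put $k_0 := \min\{k : y_k \notin [c, d]\}$. The hypothesis $g(d) \in f([a, b]) \subseteq (c, d]$ gives $y_1 \in (c, d)$, so $k_0 \ge 2$ and $y_{k_0} \in [a, c)$. Subcases where some $y_j$ equals $c$, or $y_{k_0} = a$, trivially yield a fixed point of the relevant $g^j \circ f$ at $x = a$, so I may assume $y_j \in (c, d)$ for $1 \le j \le k_0 - 1$ and $y_{k_0} > a$.

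Now the crux. Using $g(d) \in f([a, b])$, fix $x^* \in [a, b]$ with $f(x^*) = g(d) = y_1$; then $(g^{k_0 - 1} \circ f)(x^*) = y_{k_0} < c$, so $x^*$ is not in the domain of $F_{k_0} := g^{k_0} \circ f$. Let $[a, \gamma]$ be the connected component of $\mathrm{dom}(F_{k_0})$ containing $a$. A short continuity induction using $y_j \in (c, d)$ for $j \ge 1$ shows $a$ is interior to $\mathrm{dom}(F_{k_0})$, hence $\gamma > a$, while $x^* \notin \mathrm{dom}(F_{k_0})$ forces $\gamma < x^* \le b$. A boundary analysis at $\gamma$ then pins down $F_{k_0 - 1}(\gamma) = c$: any strict inequality $F_{k_0-1}(\gamma) > c$ would, by continuity, extend $\mathrm{dom}(F_{k_0})$ past $\gamma$; and if $F_j(\gamma) = c$ at some earlier index $j < k_0 - 1$, then $F_{j+1}(\gamma) = g(c) = a$ would leave $[c, d]$, expelling $\gamma$ from $\mathrm{dom}(F_{k_0})$. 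Therefore $F_{k_0}(\gamma) = g(c) = a < \gamma$ while $F_{k_0}(a) = y_{k_0} \ge a$, and IVT applied to the continuous map $F_{k_0} : [a, \gamma] \to [a, d]$ produces a fixed point of $g^{k_0} \circ f$. The main obstacle is the inequality $\gamma < b$: this is exactly where the hypothesis $g(d) \in f([a, b])$ is essential, since without it the component of $\mathrm{dom}(F_{k_0})$ containing $a$ could run all the way to $b$ with $F_{k_0} > \Id$ throughout, blocking IVT.
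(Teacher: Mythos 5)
Your proof is correct, and while it shares the same skeleton as the paper's argument --- reduce to $\Fix(f)=\Fix(g)=\emptyset$, follow the strictly decreasing orbit $d, g(d), g^2(d),\dots$ until it first drops below $c$, and finish by applying the intermediate value theorem to some $g^k\circ f$ --- the mechanism by which you locate the interval for the IVT is genuinely different. The paper normalizes $f$ and $g$ by shrinking their domains, then builds a nested chain of intervals $[a_i,b_i]$ with $[a_i,b_i]\downf{g_i}[d_{i+1},d_i]$ (a covering/itinerary construction) and extracts the fixed point of $g_{p+1}$ at the largest index $p$ with $b_p<g_p(b_p)$; this maximal-index selection is needed because the left endpoints $a_i$ drift away from $a$. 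You instead anchor everything at $a$: you treat $F_{k_0}=g^{k_0}\circ f$ as a partial map, take the component $[a,\gamma]$ of its domain containing $a$, and observe that since all iterates stay $\le d$, the only way to exit the domain is through the value $c$, forcing $F_{k_0-1}(\gamma)=c$ and hence $F_{k_0}(\gamma)=a<\gamma$ while $F_{k_0}(a)=y_{k_0}>a$; one application of the IVT then suffices, with no maximal-index argument. Your preliminary case split (whether $f$ attains a value $\le c$, settled by a single application of $g\circ f$) replaces the paper's domain-shrinking normalization. You also correctly identify that the hypothesis $g(d)\in f([a,b])$ enters precisely to produce the point $x^*\notin\mathrm{dom}(F_{k_0})$ guaranteeing $\gamma<b$, which is exactly the role the paper's hypothesis plays in guaranteeing $f([a,b])\supseteq[d_1,d_0]$. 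Both proofs are complete; yours trades the covering formalism for a boundary analysis of the domain of a partial composition, which is arguably more self-contained, while the paper's version packages the construction in the $\downf{}$ notation that it reuses in Lemma~\ref{L:recper2}.
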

\begin{proof}
Assume that neither $f$ nor $g$ has a fixed point. Then $g(d)<d$ and,
by the assumption, $\min_{x\in[a,b]} f(x)\le g(d)$.
By replacing $b$ by appropriate $a<b'<b$
(and replacing $f$ by $f'=f|_{[a,b']}$), if necessary,
we may assume that $f(x)>f(b)=g(d)$ for every $x\in[a,b)$.
Analogously by replacing $c$ by a larger number, if necessary, we may assume that
$a = g(c) < g(y)$ for every $y\in (c,d]$. See Figure~1  
(the choice $b<c$ in the figure does not play any role in the proof).

Since $f$ and $g$ have no fixed points, it holds
$$
 f(x)>x \quad\text{and}\quad g(y)<y
 \qquad
 \text{for every } x\in[a,b] \text{ and } y\in[c,d].
$$
Thus $\min_{y\in[c,d]} (y-g(y))>0$ and hence there is $m\ge 1$ such that
$$
 d_0:=d
 \ > \
 d_1:=g(d_0)
 \ > \
 \dots
 \ > \
 d_m:=g(d_{m-1})
 \ \ge \  c \
 \ > \
  d_{m+1}:=g(d_m).
$$
If $d_{m+1}=a$ then $a\in \Fix(g^{m+1}\circ f)$ and the proof is finished. So assume that
$a<d_{m+1}$.

 \begin{figure}[ht]
  \label{Fig:recper}
  \begin{center}
    \epsffile{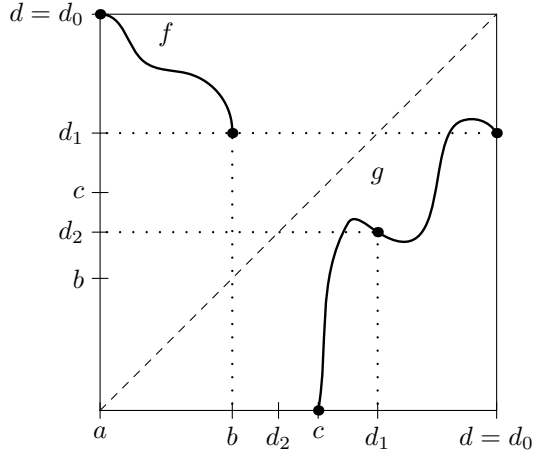}
   \caption{Illustration  for the case when $m=1$.}
  \end{center}
 \end{figure}

For $i\ge 0$ put $g_i = g^i\circ f$.
Since $f([a,b])\supseteq [d_1,d_0]$ and
$g([d_{i},d_{i-1}])\supseteq [d_{i+1},d_{i}]$ for $i=1,\dots,m$, we get
$$
 g_i([a,b])\supseteq [d_{i+1},d_i],
 \qquad\text{for }i=0,\dots,m
$$
(of course, $g_i([a,b])$ means the $g_i$-image of the intersection of $[a,b]$ with
the domain of $g_i$).
So, by the observation above the lemma, one can find $a_i, b_i$ such that
$$
 a=a_0\le a_1\le \dots\le a_m \ < \ b_m \le \dots \le b_1\le b_0= b
 \qquad
 \text{and}
 \qquad
 [a_i,b_i] \downf{g_i} [d_{i+1},d_i].
$$
Moreover, $g([d_{m+1},d_m])=g([c,d_m]) \supseteq [a,d_{m+1}]$
and so there are $a_{m+1}<b_{m+1}$ in $[a_m,b_m]$ such that
$$
 [a_{m+1},b_{m+1}] \downf{g_{m+1}} [a,d_{m+1}].
$$
Then $g_m(b_{m+1}) = c$ since $g^{-1}(a)=\{c\}$.

Put
$$
 p = \max\{0\le i \le m+1:\ b_i<g_i(b_i)\}.
$$
Since $b<f(b)$ can be written as $b_0 < g_0(b_0)$, $p\ge 0$ exists. Moreover,
$p\le m$ since $b_{m+1}>a =g_{m+1}(b_{m+1})$.
Now $g_{p+1}(b_{p+1}) \le b_{p+1}$ (by the choice of $p$)
and, regardless of whether $p\le m-1$ or $p=m$, we get
$g_{p+1}(a_{p+1}) = d_{p+1}=g_p(b_p)>b_p>a_{p+1}$.
So
$g_{p+1}=g^{p+1}\circ f$ has a fixed point in $[a_{p+1},b_{p+1}]$.
Hence $\Pgf{g}{f}\ne\emptyset$.
\end{proof}

To simplify the things,
in the following two lemmas we will assume that a given free arc $A$
is (not only homeomorphic to $[0,1]$ but) exactly $[0,1]$.
This of course does not restrict generality and enables us to imagine
a part of the considered dynamics as an interval one.

\begin{lemma}\label{L:recper2}
Let $X$ be a compact metrizable space with a free arc $A=[0,1]$.
Let $f:A\to X$ and $g:X\to X$ be continuous maps such that $A\cap \Pgf{g}{f}=\emptyset$
and there are $0<x<y<1$ with $f(x),g(y)\in [x,y]$. Then the following are true:
\begin{enumerate}
    \item[(1)] If $f(x)\le g(y)$ then $f(0)\in (0,1)$.
    \item[(2)] If $f(x)>g(y)$ then $f(0)\in (0,1)$ or $g(f(0))\in (0,1)$.
\end{enumerate}
\end{lemma}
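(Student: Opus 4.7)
The plan is to prove both parts by contradiction, deducing part~(2) from part~(1) applied to the composition $F=g\circ f$. Write $x'=f(x)$, $y'=g(y)$; since $f,g$ have no fixed points on $A$ one has $0<x<x'\leq y$ and $x\leq y'<y$.

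For part~(1), assume $f(0)\notin(0,1)$. Since $f(0)\neq 0$, $f(0)\in\{1\}\cup(X\setminus A)$; using that $X=(0,1)\sqcup\{0,1\}\sqcup(X\setminus A)$ with $(0,1)$ and $X\setminus A$ disjoint open sets, the connected set $f([0,x])$ --- meeting both $(0,1)$ and its complement --- is forced to hit $\{0,1\}$. Let $t_0=\sup\{t\in[0,x]:f(t)\notin(0,1)\}$; then $t_0\in[0,x)$, $f(t_0)\in\{0,1\}$, $f((t_0,x])\subseteq(0,1)$. The option $f(t_0)=0$ is ruled out by the IVT on $[t_0,x]\subseteq A$ (else a fixed point of $f$), so $f(t_0)=1$. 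Set $b'=\inf\{t\in[t_0,x]:f(t)\leq x'\}$ and $a^*=\sup\{t\in[t_0,b']:f(t)=y\}$: continuity gives $f(a^*)=y$, $f([a^*,b'])=[x',y]$, and (in Case~(1)) $a^*<x\leq y'$. For $g$, let $[a'',b'']$ be the component of $\{s\in A:g(s)\in A\}$ containing $y$; constancy of sign of $g-\Id$ on this component gives $g<\Id$ there, forcing $a''>0$ and $g(a'')=0$ by the standard boundary analysis. The IVT yields some $s\in[a'',y]$ with $g(s)=a^*$; taking $c$ maximal makes $g((c,y])\subseteq(a^*,y)$, and $g<\Id$ together with $y'>a^*$ force $c>a^*$ and $c<y$. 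Lemma~\ref{L:recper1} applied with $a=a^*$, $b=b'$, $d=y$, and this $c$ then closes the argument: its only nonroutine hypothesis $g(d)=y'\in f([a,b])=[x',y]$ is precisely the Case~(1) inequality $g(y)\geq f(x)$, and the resulting element of $\Pgf{g}{f}$ lies in $[a,b]\cup[c,d]\subseteq A$, contradicting $A\cap\Pgf{g}{f}=\emptyset$.

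For part~(2), assume both $f(0)\notin(0,1)$ and $g(f(0))\notin(0,1)$. The construction above does not use the inequality between $f(x)$ and $g(y)$, so it still produces $a^*\in(0,y)$ with $f(a^*)=y$. Put $F=g\circ f$ and apply part~(1) to $(F,g)$ with $\tilde x=a^*$, $\tilde y=y$: $A\cap\Pgf{g}{F}\subseteq A\cap\Pgf{g}{f}=\emptyset$ because $\Fix(F)=\Fix(g\circ f)$ and every $\Fix(g^k\circ F)=\Fix(g^{k+1}\circ f)$ sit inside $\Pgf{g}{f}$, while $F(\tilde x)=g(y)=g(\tilde y)=y'\in[a^*,y]$ delivers both the membership and the inequality $F(\tilde x)\leq g(\tilde y)$ (as equality). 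Part~(1) then forces $F(0)=g(f(0))\in(0,1)$, contradicting the assumption.

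The main obstacle is the coordinated selection of $a^*$ and $c$ in part~(1) so that all hypotheses of Lemma~\ref{L:recper1} hold simultaneously --- most delicately $g([c,d])\subseteq[a,d]$ (secured by taking $c$ to be the maximal preimage of $a^*$, so that $g>a^*$ on $(c,y]$) and $c>a$ (forced by $g<\Id$ on the $D_g$-component of $y$). The Case~(1) inequality $g(y)\geq f(x)$ is used exactly once, to place $g(d)$ inside $f([a,b])$; its absence in Case~(2) is precisely what necessitates the additional iterate of $g$ in the reduction.
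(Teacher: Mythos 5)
Your proof is correct and follows essentially the same route as the paper's: part (1) is reduced to Lemma~\ref{L:recper1} by extracting a sub-arc on which $f$ sweeps down from $y$ and a sub-arc on which $g$ attains the left endpoint, and part (2) is deduced from part (1) applied to the composition $g\circ f$ with a preimage of $y$ in place of $x$. The differences are only cosmetic (your exact image $f([a^*,b'])=[x',y]$ versus the paper's containment $f([a,x])\subseteq[a,y]$, and your detour through the component of $g^{-1}(A)$ containing $y$ to locate the point where $g$ hits $0$ before finding the maximal preimage $c$ of $a^*$).
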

\begin{proof} (1)
Assume, on the contrary, that $f(0)\not\in (0,1)$. Let $v\in [0,x)$ be maximal such that
$f(v)\in\{0,1\}$; such $v$ exists since $f([0,x])$ is a connected set intersecting $(0,1)$
and $X\setminus (0,1)$.
By maximality of $v$ we get
$$
 f([v,x]) \subseteq [0,1].
$$
Then $f(v)=1$ since otherwise $[v,x]\subseteq f([v,x]) \subseteq [0,1]$ and
$f$ would have a fixed point in $[v,x]$.
Let $a\in[v,x]$ be maximal such that $f(a)=y$; then $f([a,x])\subseteq [a,y]$.
Finally let $c\in[a,y]$ be maximal such that $g(c)=a$ (it must exist since $g$ has no fixed point
in $[a,y]$ and $g(y)<y$);
then $g([c,y])\subseteq [a,y]$.
Denote $f'=f|_{[a,x]}$ and $g'=g|_{[c,y]}$. Since $f(x)\le g(y)<y=f(a)$ we get
$g'(y)\in f'([a,x])$.
Now Lemma~\ref{L:recper1}
applied to  $f',g'$
gives that $\Pgf{g'}{f'}\ne \emptyset$,
a contradiction with $\Pgf{g'}{f'}\subseteq \Pgf{g}{f}\cap A=\emptyset$.

(2) Now assume that $x\le g(y)<f(x)\le y$ and $f(0)\not\in (0,1)$. As in the proof of (1)
let $v\in[0,x)$ be maximal such that $f(v)\in\{0,1\}$. Then again $f(v)=1$ and
$[f(x),1]\subseteq f([v,x])\subseteq [0,1]$. So there is $x'\in (v,x)$ with $f(x')=y$.
Put $f'=g\circ f$. Then $f':A\to X$, $g:X\to X$, $x'<x\le g(y)\le y$ and $f'(x')=g(y)$. Moreover, $\Pgf{g}{f'}\cap A\subseteq \Pgf{g}{f}\cap A=\emptyset$.
The already proved case (1) applied to $f',g$ and $x',y$ gives that $g(f(0))=f'(0)\in (0,1)$.
\end{proof}

\begin{lemma}\label{L:recper3} Let $X$ be a compact metrizable space with a free
arc $A=[0,1]$. Let $f:X\to X$ be a continuous map such that
$\Per(f)\cap [0,1]=\emptyset$ and $\Rec(f)\cap(0,1)\ne\emptyset$. Then
$\Orb_f(0)\cap (0,1)\ne\emptyset$.
\end{lemma}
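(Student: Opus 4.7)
The plan is to argue by contradiction. Suppose $f^n(0)\notin(0,1)$ for every $n\ge 1$. The strategy is to apply Lemma~\ref{L:recper2}, with its roles of $f$ and $g$ played by two iterates $f^p$ and $f^q$ of our map, in such a way that its conclusion forces $f^{n}(0)\in(0,1)$ for some $n\ge 1$. The side condition $A\cap \Pgf{f^q}{f^p}=\emptyset$ comes for free from the hypotheses, because
$$\Pgf{f^q}{f^p}=\Fix(f^p)\cup\Fix(f^q)\cup\bigcup_{k\ge 1}\Fix(f^{kq+p})\subseteq \Per(f),$$
and $\Per(f)\cap A=\Per(f)\cap[0,1]=\emptyset$.

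To produce the configuration required by Lemma~\ref{L:recper2}, fix a recurrent point $r\in\Rec(f)\cap(0,1)$ and choose $\varepsilon>0$ with $(r-\varepsilon,r+\varepsilon)\subseteq(0,1)$; this neighborhood is open in $X$ since $(0,1)$ is the free interval underlying $A$. Recurrence of $r$ yields infinitely many $n\ge 1$ with $f^n(r)\in(r-\varepsilon,r+\varepsilon)$, while non-periodicity of $r$ (automatic from $\Per(f)\cap[0,1]=\emptyset$) forces all such values to be distinct from $r$ and pairwise distinct (otherwise $f^{n_i}(r)=f^{n_j}(r)$ with $n_i<n_j$ would make $f^{n_i}(r)\in[0,1]$ periodic). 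A standard extraction gives a strictly monotone subsequence $f^{n_k}(r)\to r$ with $n_1<n_2<\cdots$.

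Now split into two cases according to the direction of monotonicity. If $f^{n_1}(r)>f^{n_2}(r)>\cdots>r$, set $x=r$, $y=f^{n_1}(r)$, $p=n_1$ and $q=n_2-n_1$; then $f^p(x)=y\in[x,y]$ and $f^q(y)=f^{n_2}(r)\in(r,y)\subseteq[x,y]$, with $f^p(x)>f^q(y)$, so part~(2) of Lemma~\ref{L:recper2} gives $f^{n_1}(0)\in(0,1)$ or $f^{n_2}(0)\in(0,1)$. If instead $r-\varepsilon<f^{n_1}(r)<f^{n_2}(r)<\cdots<r$, set $x=f^{n_1}(r)$, $y=r$, $p=n_2-n_1$ and $q=n_2$; then $f^p(x)=f^q(y)=f^{n_2}(r)\in(x,y)$, and part~(1) of Lemma~\ref{L:recper2} gives $f^{n_2-n_1}(0)\in(0,1)$. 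Either outcome contradicts the standing assumption, which proves the lemma.

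The substance of the argument is packed into the earlier interval-dynamics lemmas, so the only delicate point here is matching the asymmetric roles of $f^p$ (defined on $A$) and $f^q$ (defined on $X$) in Lemma~\ref{L:recper2} to the correct side from which the orbit of $r$ accumulates at $r$. Taking $x=r$ in the decreasing case and $y=r$ in the increasing case, together with successive indices $n_1<n_2$ from the monotone subsequence, is what makes the numerical data $f^p(x)$ and $f^q(y)$ land inside $[x,y]$ with positive exponents $p,q$.
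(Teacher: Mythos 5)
Your proof is correct and follows essentially the same route as the paper's: both locate two return times $n_1<n_2$ of a recurrent point with $f^{n_2}(r)$ strictly between $r$ and $f^{n_1}(r)$, then feed suitable iterates of $f$ into Lemma~\ref{L:recper2} (the paper invokes part~(2) in both cases, whereas your choice $q=n_2$ in the increasing case lets you use part~(1); both are valid). One small imprecision: infinitely many distinct returns to the single neighborhood $(r-\varepsilon,r+\varepsilon)$ only give a monotone subsequence converging to \emph{some} point, so to get $f^{n_k}(r)\to r$ you should apply recurrence to a shrinking sequence of neighborhoods of $r$ --- a trivial repair.
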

\begin{proof}
Let $x\in\Rec(f)\cap (0,1)$; then there are positive integers $m,n$
such that
$f^n(x)\in (0,1)$ and $f^{n+m}(x)$ is between $x$ and $f^n(x)$.

Assume first that $x<f^{n+m}(x)<f^n(x)$.
We are going to apply Lemma~\ref{L:recper2}(2) to the points $x,y=f^n(x)$ and the maps $F=f^n|_A, G=f^m$.
Since trivially $\Pgf{G}{F}\subseteq \Per(f)$, we have $\Pgf{G}{F}\cap [0,1] =\emptyset$.
Further, $0<x<G(y)<F(x)=y<1$. So Lemma~\ref{L:recper2}(2) gives
$F(0)\in(0,1)$ or $G(F(0))\in (0,1)$. So
$\Orb_f(0)\cap (0,1)\ne\emptyset$.

Now assume that $f^n(x)<f^{n+m}(x)<x$. Put $x'=f^n(x)$, $y'=x$ and $F=f^{m}|_A$, $G=f^n$.
Then $0<x'=G(y') < F(x') < y'<1$. Again $\Pgf{G}{F}\cap [0,1]=\emptyset$
and we can use Lemma~\ref{L:recper2}(2) as in the previous case.
\end{proof}

\begin{proof}[Proof of Theorem~\ref{per.rec.pts2}]
We only need to prove the inclusion
$$
\overline{\Rec(f)}\cap J \subseteq \left[\Rec(f) ~ \cup ~ \overline{\Per(f)}\right]\cap J.
$$
Assume that $x\in J$ is such that $x\in \overline{\Rec}(f)\setminus \overline{\Per}(f)$.
If $x$ is recurrent we are done. So assume that $x\not\in\Rec(f)$.
 Take arbitrarily small free interval $L\subseteq J$ containing $x$ which is disjoint with ${\Per}(f)$.
 Choose an orientation on $L$ which ensures that for some
 free arc $A=[x,b]\subseteq L$ there is a recurrent point of $f$ in $(x,b)$.
 By Lemma~\ref{L:recper3}, the orbit of $x$ intersects $(x,b)\subseteq L$.
 Since $L$ was arbitrarily small, $x$ is recurrent. This contradicts our assumption.
\end{proof}

\section*{Appendix~2}\label{S:appendix2}

We are going to show that in compact connected Hausdorff spaces, many natural definitions of a disconnecting
interval are equivalent. However, in more general spaces this is not the case, see Proposition~\ref{disc.intrs}.

Let $X$ be a topological space and $J$ be a free interval. Recall that
we assume that one of the two natural orderings on $J$
(induced by the usual orderings on a real interval) is chosen and denoted by $\prec$.
If $K$ is a subinterval of $J$ we define the (possibly empty) sets
$$
 J_K^-:=\{z\in J:\ z\prec y \text{ for all } y\in K\}
 \qquad
 \text{and}
 \qquad
 J_K^+:=\{z\in J:\ y\prec z \text{ for all } y\in K\}.
$$

If $X$ is a topological space we write $X=A|B$ to mean $X=A\cup B$, $A\ne\emptyset$, $B\ne\emptyset$,
$A\cap B=\emptyset$ and $A$ and $B$ are both open in $X$. In other words $X=A|B$ means $X=A\cup B$
where $A$ and $B$ are nonempty sets which are separated in $X$ (recall that $A$ and $B$
are \emph{separated} provided $\overline{A}\cap B=A \cap \overline{B}=\emptyset$).
Sometimes the union of two disjoint sets $A,B$ will be denoted by $A\sqcup B$.
We will need the following two lemmas whose proofs can be found for instance in
\cite[Proposition~6.3]{Nad} (cf. \cite[\S 46, II, Theorem~4]{Kur2}) and \cite[Lemma~6.4]{Nad}, respectively.

\begin{lemma}\label{separation}
Let $X$ be a connected topological space, $C$ be a connected subset of $X$ and $X\setminus C=A|B$. Then
$A\cup C$, $B\cup C$ are connected and, if $C$ is closed, then they are also closed.
\end{lemma}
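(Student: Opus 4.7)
My plan is to prove the connectedness of $A\cup C$ by contradiction, and then derive the closedness claim directly from the hypothesis that $C$ is closed. By the symmetry of the roles of $A$ and $B$, the same arguments will handle $B\cup C$.

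A preliminary remark that will do a lot of the work: although the hypothesis $X\setminus C=A|B$ only asserts that $A$ and $B$ are separated in the subspace $X\setminus C$, in fact they are separated as subsets of $X$. Indeed, since $A$ is clopen in $X\setminus C$, one has $\overline{A}^X\cap(X\setminus C)=A$, whence $\overline{A}^X\cap B=A\cap B=\emptyset$, and symmetrically $A\cap\overline{B}^X=\emptyset$. I would establish this at the outset and then drop the superscripts.

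For the main step, suppose toward contradiction that $A\cup C=P\sqcup Q$ is a separation of $A\cup C$ in $X$ with $P,Q$ nonempty. Since $C$ is connected, it lies entirely in one of the pieces; without loss of generality $C\subseteq P$, forcing $Q\subseteq A$. The idea is now to extend this into a separation of the whole ambient space: I claim that $X=(P\cup B)\sqcup Q$ is a decomposition into two nonempty separated sets, contradicting the connectedness of $X$. Nonemptiness is immediate ($B\ne\emptyset$ by hypothesis). For separation, $\overline{Q}\subseteq\overline{A}$ and $\overline{A}\cap B=\emptyset$ give $\overline{Q}\cap B=\emptyset$, which combined with $\overline{Q}\cap P=\emptyset$ (from the separation of $P$ and $Q$) yields $\overline{Q}\cap(P\cup B)=\emptyset$. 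The symmetric direction is analogous: $Q\cap\overline{B}\subseteq A\cap\overline{B}=\emptyset$ and $Q\cap\overline{P}=\emptyset$, so $Q\cap\overline{P\cup B}=\emptyset$. This contradicts the connectedness of $X$, so $A\cup C$ is connected.

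For the closedness assertion, assume in addition that $C$ is closed in $X$. Since $A\cup C=X\setminus B$, it is enough to show that $B$ is open in $X$. But $X\setminus C$ is open in $X$ (as $C$ is closed), and $B$ is open in the subspace $X\setminus C$ by hypothesis, so $B$ is open in $X$, hence $A\cup C$ is closed. Swapping the roles of $A$ and $B$ yields the analogous statements for $B\cup C$. I do not anticipate any real obstacle; the only subtle point, and the one worth flagging, is the preliminary observation that the relation $X\setminus C=A|B$ upgrades to a separation of $A$ and $B$ inside $X$, since this is what legitimately allows $B$ to be adjoined to $P$ in the contradiction.
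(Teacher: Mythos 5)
Your proof is correct. The paper does not prove this lemma itself but refers to Nadler's \emph{Continuum Theory} (Proposition~6.3); the standard argument given there is essentially yours --- first observe that $A$ and $B$, being relatively clopen in $X\setminus C$, are separated in $X$, then turn a hypothetical separation $A\cup C=P|Q$ with $C\subseteq P$ into the separation $(P\cup B)|Q$ of $X$ --- and your derivation of closedness from $A\cup C=X\setminus B$ with $B$ open in the open set $X\setminus C$ is likewise the standard one.
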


\begin{lemma}\label{separation.cutpts}
Let $X$ be a connected topological space and $a,b\in X$ be such that
$$
 X\setminus \{a\} = A_1|A_2
 \qquad\text{and}\qquad
 X\setminus \{b\} = B_1|B_2 \,.
$$
If $a\in B_1$ and $b\in A_2$ then $B_2\cup\{b\} \subseteq A_2$.
\end{lemma}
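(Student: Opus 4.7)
The plan is to apply Lemma~\ref{separation} with $C=\{b\}$ and the given separation $X\setminus\{b\}=B_1|B_2$, obtaining that $B_2\cup\{b\}$ is connected in $X$, and then to observe that this connected set lies in $X\setminus\{a\}$ and must therefore be contained in one of the two pieces $A_1$, $A_2$; since it meets $A_2$ at the point $b$, it sits inside $A_2$.

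More precisely, the steps would run as follows. First, note that $a\ne b$: we have $b\in A_2\subseteq X\setminus\{a\}$ by hypothesis. Together with $a\in B_1$ and $B_1\cap B_2=\emptyset$, this yields $a\notin B_2\cup\{b\}$, so
\[
 B_2\cup\{b\}\subseteq X\setminus\{a\}=A_1\sqcup A_2.
\]
Second, the singleton $\{b\}$ is connected in $X$, so Lemma~\ref{separation} applied to $C=\{b\}$ and the separation $X\setminus\{b\}=B_1|B_2$ guarantees that $B_2\cup\{b\}$ is connected.

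Third, I would upgrade the separation $A_1|A_2$ from the subspace $X\setminus\{a\}$ to a separation of the two sets inside $X$. Since $a\notin A_1\cup A_2$, for each $i\in\{1,2\}$ one has
\[
 \overline{A_i}\cap A_{3-i} \;=\; \bigl(\overline{A_i}\cap (X\setminus\{a\})\bigr)\cap A_{3-i} \;=\; \overline{A_i}^{\,X\setminus\{a\}}\cap A_{3-i} \;=\; \emptyset,
\]
where the last equality uses that $A_1,A_2$ are separated in $X\setminus\{a\}$. Hence $A_1$ and $A_2$ are separated in $X$, and the connected set $B_2\cup\{b\}$ must lie wholly in $A_1$ or wholly in $A_2$. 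Since $b\in A_2$, we conclude $B_2\cup\{b\}\subseteq A_2$, as required.

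I do not anticipate any serious obstacle; the only mildly delicate point is the distinction between separation of $A_1, A_2$ in the subspace $X\setminus\{a\}$ and separation inside $X$, which is handled once we notice that $a$ belongs to neither $A_1$ nor $A_2$.
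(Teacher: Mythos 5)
Your proof is correct. The paper does not prove this lemma itself --- it refers the reader to \cite[Lemma~6.4]{Nad} --- but your argument (connectedness of $B_2\cup\{b\}$ via Lemma~\ref{separation}, the observation that this set avoids $a$, and the fact that a connected subset of $A_1\sqcup A_2$ must lie entirely in one of the two separated pieces) is precisely the standard one given there, and your care about separation in the subspace $X\setminus\{a\}$ versus in $X$ is exactly the right point to check.
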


From Lemma~\ref{separation} we immediately obtain the following observation which will be used repeatedly.
\begin{lemma}\label{separation.trivial}
Let $X$ be a connected topological space, $C$ be a connected subset of $X$ and $X\setminus C=A|B$. Then
\begin{enumerate}
  \item[(a)] $\partial{C}\cap\partial{A}\ne\emptyset$ and $\partial{C}\cap\partial{B}\ne\emptyset$;
  \item[(b)] if $C$ is open then $A,B$ are closed and $\partial{C}\cap{A}\ne\emptyset$, $\partial{C}\cap{B}\ne\emptyset$;
  \item[(c)] if $C$ is closed then $A,B$ are open and ${C}\cap\partial{A}\ne\emptyset$, ${C}\cap\partial{B}\ne\emptyset$.
\end{enumerate}
\end{lemma}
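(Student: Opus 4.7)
The plan is to establish (a) first from Lemma~\ref{separation} and then read off (b) and (c) as immediate corollaries. The central observation is this: if $C$ and $A$ are disjoint nonempty subsets of $X$ and $A\cup C$ is connected, then their closures in $X$ must meet, for otherwise $A$ and $C$ would form a separation of $A\cup C$. This, combined with the inclusions $A\subseteq X\setminus C$ and $C\subseteq X\setminus A$, will place any witnessing point simultaneously in $\partial A$ and $\partial C$.

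For (a), I would first upgrade the hypothesis $X\setminus C=A|B$ to the conclusion that $\overline{A}\subseteq A\cup C$ and $\overline{B}\subseteq B\cup C$ in $X$: since $A$ is closed in the subspace $X\setminus C$ (it is clopen there), one has $\overline{A}\cap(X\setminus C)=A$, whence $\overline{A}\setminus A\subseteq C$. Next, Lemma~\ref{separation} gives that $A\cup C$ is connected. Since $A$ and $C$ are disjoint and nonempty, connectedness of $A\cup C$ forces $\overline{A}\cap\overline{C}\ne\emptyset$ (otherwise $A$ and $C$ would be separated). Now pick any point $p\in\overline{A}\cap\overline{C}$. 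Because $C\subseteq X\setminus A$, we have $\overline{C}\subseteq\overline{X\setminus A}$, so $p\in\overline{A}\cap\overline{X\setminus A}=\partial A$; because $A\subseteq X\setminus C$, we have $\overline{A}\subseteq\overline{X\setminus C}$, so $p\in\overline{C}\cap\overline{X\setminus C}=\partial C$. Hence $p\in\partial C\cap\partial A$, which proves the first half of (a). The second half is obtained by the symmetric argument applied to the connected set $B\cup C$.

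For (b) and (c), the remaining work is bookkeeping. If $C$ is open then $X\setminus C$ is closed in $X$, so $A$ and $B$, being closed in $X\setminus C$, are closed in $X$; in particular $\partial A\subseteq A$ and $\partial B\subseteq B$, and (a) immediately yields $\partial C\cap A\supseteq\partial C\cap\partial A\ne\emptyset$ and likewise for $B$. Dually, if $C$ is closed then $X\setminus C$ is open, the sets $A,B$ are open in $X$, and since $\partial C\subseteq C$, (a) gives $C\cap\partial A\supseteq\partial C\cap\partial A\ne\emptyset$ and similarly for $B$.

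I do not anticipate a genuine obstacle here; the only non-cosmetic step is the observation that the inclusions $\overline{A}\subseteq A\cup C$ and $\overline{B}\subseteq B\cup C$ hold in $X$ (and not merely in $X\setminus C$), which is what allows one to invoke Lemma~\ref{separation} and to conclude $\overline{A}\cap\overline{C}\ne\emptyset$ from the connectedness of $A\cup C$. Everything else is a direct translation between set-theoretic inclusions and the definitions of boundary and closure.
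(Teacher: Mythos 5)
Your proof is correct and follows exactly the route the paper intends: the paper states this lemma as an immediate consequence of Lemma~\ref{separation} without further argument, and your write-up simply fills in that derivation (connectedness of $A\cup C$ forces $\overline{A}\cap\overline{C}\ne\emptyset$, and the inclusions $A\subseteq X\setminus C$, $C\subseteq X\setminus A$ place such a point in $\partial A\cap\partial C$), with (b) and (c) read off as you describe.
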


To shorten some formulations in the next lemma and in Proposition~\ref{disc.intrs}, we introduce the
following terminology. If $J$ is a free interval of a space $X$ and $K\subseteq J$ is an interval
such that $J\setminus K$ has two components, we say that $K$ is a \emph{bi-proper subinterval of $J$}.
Note that every point $c\in J$ is a (degenerate) bi-proper subinterval of $J$.

\begin{lemma}\label{separation-po-kuse}
Let $X$ be a connected Hausdorff topological space, $J$ be a free interval of $X$
and $K$ be a subinterval of $J$ such that $X\setminus K$ is not connected.
Then for every separation $X\setminus K=L_K|R_K$
the notation can be (and in what follows always will be) chosen in such a way that
\begin{equation}\label{EQ:separationLR}
 L_K\cap J=J_K^-
 \qquad\text{and}\qquad
 R_K\cap J=J_K^+ \,.
\end{equation}
Moreover, if $K$ is a bi-proper subinterval of $J$, then $J_K^-,J_K^+$ are nonempty and $L_K,R_K$ are
the (two) components of $X\setminus K$.
Regardless of whether $K$ is bi-proper or not, it holds
\begin{equation}\label{EQ:separationLR2}
 \overline{L_K}=\begin{cases}
   L_K\cup\{\inf K\} &\text{if } J_K^-\ne\emptyset
   \\
   L_K  &\text{otherwise}
 \end{cases}
 \qquad\text{and}\qquad
 \overline{R_K}=\begin{cases}
   R_K\cup\{\sup K\} &\text{if } J_K^+\ne\emptyset
   \\
   R_K  &\text{otherwise}.
 \end{cases}
\end{equation}
\end{lemma}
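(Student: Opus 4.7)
My plan is to prove the three assertions separately, using a common local analysis near $K$ in the free interval $J$. First, $J_K^-$ and $J_K^+$ are open subintervals of $J$, hence connected, and each is disjoint from $K$; so each nonempty $J_K^\pm$ lies entirely in one of $L_K, R_K$. When at most one of them is nonempty (including the case $K=J$), condition $(\ref{EQ:separationLR})$ can be arranged by choosing the notation. The substance of the lemma lies in the bi-proper case, where $\inf K,\sup K\in J$ so that $\overline{K}\subseteq[\inf K,\sup K]\subseteq J$ (the latter being compact in $J$ and thus closed in $X$).

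The key step in the bi-proper case is to show that $J_K^-$ and $J_K^+$ lie on different sides. Suppose, for contradiction, that $J_K^-\cup J_K^+\subseteq L_K$; then $R_K\cap J=\emptyset$. I will show $R_K$ is then clopen in $X$, contradicting connectedness. Closedness: any limit point of $R_K$ outside $R_K$ lies in $L_K$ or $K$; the first is excluded by $\overline{R_K}\cap L_K=\emptyset$, and for $k\in K$, since $J$ is an open neighborhood of $k$, a small enough neighborhood of $k$ is contained in $J=J_K^-\cup K\cup J_K^+\subseteq L_K\cup K$, missing $R_K$. Openness: the separation gives $\overline{L_K}\subseteq L_K\cup K$, and $\overline{K}\setminus K\subseteq\{\inf K,\sup K\}\cap(J\setminus K)\subseteq J_K^-\cup J_K^+\subseteq L_K$; combining, $\overline{L_K\cup K}\subseteq L_K\cup K$, so $L_K\cup K$ is closed and $R_K=X\setminus(L_K\cup K)$ is open. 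Thus $R_K$ is clopen, nonempty, and proper---contradiction.

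For the components statement, it suffices to show $L_K$ and $R_K$ are each connected, since they already form a separation of $X\setminus K$. If $L_K=A\sqcup B$ is a nontrivial separation of $L_K$ with $J_K^-\subseteq A$, then $B\cap J=\emptyset$, and the argument of the previous paragraph with $B$ in place of $R_K$ shows $B$ is clopen in $X$---again a contradiction. Hence $L_K$, and symmetrically $R_K$, is connected.

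Finally, $(\ref{EQ:separationLR2})$ follows by the same local analysis. The separation gives $\overline{L_K}\setminus L_K\subseteq K$; for $k$ in the $J$-interior of $K$ or $k=\sup K\in K$, a small $J$-neighborhood of $k$ is contained in $K\cup J_K^+\subseteq K\cup R_K$, which is disjoint from $L_K$, so $k\notin\overline{L_K}$. On the other hand, $\inf K$ is the supremum of $J_K^-$ in $J$, so $\inf K\in\overline{L_K}$ exactly when $J_K^-\ne\emptyset$; if $J_K^-=\emptyset$, then $L_K\cap J=\emptyset$, giving $\overline{L_K}\cap J=\emptyset$ and hence $\overline{L_K}=L_K$. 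The formula for $\overline{R_K}$ is symmetric. The main technical obstacle throughout is the careful local analysis of neighborhoods of points of $K$, which must account for whether or not the endpoints of $K$ belong to $K$.
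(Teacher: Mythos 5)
Your proof is correct and follows essentially the same route as the paper's: reduce to the bi-proper case by a choice of notation, derive a contradiction with the connectedness of $X$ by exhibiting a nonempty proper clopen set when $J_K^-$ and $J_K^+$ lie on the same side (the paper packages this step as Lemma~\ref{separation.trivial}(a), whose proof is the same clopen argument), repeat the argument to get connectedness of $L_K,R_K$, and deduce (\ref{EQ:separationLR2}) from (\ref{EQ:separationLR}) using that $J$ is open. The only blemishes are cosmetic: $J_K^{\pm}$ need not be \emph{open} subintervals of $J$ (only connectedness is used), and the claim that a small $J$-neighborhood of $\sup K\in K$ lies in $K\cup J_K^+$ fails for degenerate $K$, where $\sup K=\inf K$ and the conclusion is unaffected.
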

Before reading the proof we recommend to see Example~\ref{EX:separ-three-comps}.
It illustrates that if the subinterval $K$ is not bi-proper then $L_K,R_K$ are not uniquely determined by $K$
and they need not be connected.

\begin{proof}
First we prove (\ref{EQ:separationLR2}) assuming that the rest of the lemma is true.
Since $L_K$ and $R_K$ are separated we have $\overline{L_K}\setminus L_K\subseteq K \subseteq J$.
If $J_K^-$ is nonempty then, by (\ref{EQ:separationLR}) and taking into account that $J$ is open in $X$, we get $\overline{L_K}\cap J = J_K^- \cup\{\inf K\}$ and so
$\overline{L_K}=L_K\cup \{\inf K\}$. If $J_K^-=\emptyset$,  $\overline{L_K}\cap J = \emptyset$ and $\overline{L_K}=L_K$.
Similarly for $\overline{R_K}$.

It remains to prove the lemma without (\ref{EQ:separationLR2}).

Since $K$ disconnects $X$, there are $L_K$ and $R_K$ with $X\setminus K=L_K|R_K$.
The sets $J_K^-,J_K^+$ are connected (possibly empty); if some of these two sets is nonempty, then it is either
a subset of $L_K$ or a subset of $R_K$.
Distinguish four cases.

If $K=J$ then there is nothing to prove since \emph{every} separation $X\setminus K=L_K|R_K$ satisfies (\ref{EQ:separationLR}).
If $J_K^-=\emptyset\ne J_K^+$ then, by changing the notation of the sets $L_K$ and $R_K$ if necessary, we obtain (\ref{EQ:separationLR})
and the same argument works if $J_K^+=\emptyset\ne J_K^-$; in these two cases the first part of
the lemma just fixes the notation so that equation (\ref{EQ:separationLR}) hold. It remains to prove the lemma if $K$ is bi-proper.

So assume that $K$ is bi-proper (i.e. $J_K^-, J_K^+$ are nonempty) and $X\setminus K=L_K|R_K$. By changing the notation, if necessary, we may assume that $L_K\supseteq J_K^-$;
we are going to prove that $R_K\supseteq J_K^+$. Suppose, on the contrary, that $L_K\supseteq J_K^+$.
Then $\partial K\cap \partial R_K\subseteq J\cap \overline{R_K} \subseteq  \overline{J\cap R_K}=\emptyset$ (since $J$ is open and
$J\subseteq K\cup L_K$) which contradicts Lemma~\ref{separation.trivial}(a).

Once we know that $L_K\supseteq J_K^-$, $L_K\cap K=\emptyset$ and $L_K\cap J_K^+=\emptyset$ (since $R_K\supseteq J_K^+$), then taking into account that $J=J_K^-\sqcup K\sqcup J_K^+$
we get that $L_K\cap J=J_K^-$. Analogously $R_K\cap J=J_K^+$.

To show that $L_K$ is connected, suppose on the contrary that $L_K=A|B$. Since $J_K^-$ is connected, it is a subset of $A$ or $B$, say $J_K^-\subseteq A$.
We claim that $X=B|(A\cup K\cup R_K)$.

Now $B\cap J \subseteq B\cap (A\cup K\cup R_K)=\emptyset$. Since $J$ is open, $\overline{B}\cap J\subseteq \overline{B\cap J}$. Hence, $\overline{B}\cap J = \emptyset$.
Since $J$ is homeomorphic to an open interval and $K\subseteq J$ is bi-proper,
for $a=\inf K$ and $b=\sup K$ we have $K\subseteq [a,b]\subseteq J$.
Obviously, $[a,b]$ is compact and since $X$ is assumed to be Hausdorff, $[a,b]$ is a closed set in $X$.
Thus $\overline{K}\subseteq [a,b] \subseteq J$ and so $\overline{B}\cap \overline{K}\subseteq \overline{B}\cap J=\emptyset$.
Using this fact we get
$\overline{B}\cap (A\cup K\cup R_K) = \emptyset$ and $B\cap \overline{A\cup K\cup R_K} = \emptyset$.
This contradicts the connectedness of $X$ and so we have proved that $L_K$ is connected.
Similarly, $R_K$ is connected.
\end{proof}

\begin{lemma}\label{separation2}
Let $X$ be a connected Hasudorff topological space with a free interval $J$ and let $c\in J$ be a cut point of $X$.
Then
\begin{enumerate}
  \item[(a)] $X\setminus \{c\} = L_c|R_c$ where $L_c,R_c$ are the connected sets from Lemma~\ref{separation-po-kuse};
  \item[(b)] $\overline{L_c}=L_c\cup\{c\}$ and $\overline{R_c}=R_c\cup\{c\}$;
  \item[(c)] $\{c\}$ is closed in $X$ and $L_c$ and $R_c$ are open in $X$.
  \item[(d)] each $z\in J$ is a cut point of $X$;
  \item[(e)] every bi-proper subinterval $K$ of $J$ cuts $X$ into exactly two components.
\end{enumerate}
\end{lemma}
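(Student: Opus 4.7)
Parts (a), (b), (c) are quick consequences of Lemma~\ref{separation-po-kuse} applied to $K=\{c\}$, combined with the Hausdorff hypothesis. Since $c\in J$ lies in the open interval $J$, both $J_{\{c\}}^-$ and $J_{\{c\}}^+$ are nonempty, so $\{c\}$ is a bi-proper subinterval of $J$. As $c$ is a cut point, Lemma~\ref{separation-po-kuse} supplies the separation $L_c|R_c$ with $L_c,R_c$ connected and equal to the two components of $X\setminus\{c\}$, giving (a). Part (b) is the closure identity~(\ref{EQ:separationLR2}) of Lemma~\ref{separation-po-kuse} in this case. For (c), Hausdorffness makes $\{c\}$ closed, hence $X\setminus\{c\}$ is open in $X$; since $L_c,R_c$ are open in the subspace $X\setminus\{c\}$ (being the sets of a separation), they are open in $X$.

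For (d), fix $z\in J\setminus\{c\}$ and assume by symmetry $z\prec c$. The plan is to use the compact subinterval $K'=[z,c]_J$ as an intermediary; being compact and $X$ Hausdorff, $K'$ is closed in $X$. First I verify that
\[
  X\setminus K' \;=\; \bigl((L_c\setminus J)\cup J_z^-\bigr) \,|\, R_c
\]
is a separation: the two pieces are nonempty, their union equals $L_c\setminus[z,c)_J$ together with $R_c$, i.e.\ $X\setminus K'$, and they are separated because $\overline{L_c}=L_c\cup\{c\}$ misses $R_c$ while $\overline{R_c}=R_c\cup\{c\}$ misses $L_c$ (from (b)). Since $K'$ is bi-proper, Lemma~\ref{separation-po-kuse} then identifies these pieces as the connected components $L_{K'},R_{K'}$ and yields $\overline{L_{K'}}=L_{K'}\cup\{z\}$. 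To disconnect $X\setminus\{z\}$ I set $A=L_{K'}$ and $B=(z,c]_J\cup R_{K'}$; then $B$ is connected because $(z,c]_J$ and $\overline{R_c}=R_c\cup\{c\}$ are both connected and share $c$, and $A|B$ is a separation because $\overline{A}=L_{K'}\cup\{z\}$ avoids $B$ (as $z\notin B$) while $\overline{B}\subseteq[z,c]_J\cup R_c$ is disjoint from $A\subseteq(X\setminus J)\cup J_z^-$.

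For (e), let $K$ be a bi-proper subinterval of $J$ and pick any $p\in K$. By (d), $p$ is a cut point, and Lemma~\ref{separation-po-kuse} provides the separation $X\setminus\{p\}=L_p|R_p$. The same closure-based argument as in (d) shows that $(L_p\setminus K)\,|\,(R_p\setminus K)$ is a separation of $X\setminus K$, with both pieces nonempty because $J_K^-\subseteq L_p\setminus K$ and $J_K^+\subseteq R_p\setminus K$ (both nonempty by biproperness). Hence $X\setminus K$ is disconnected, so Lemma~\ref{separation-po-kuse} applied to the bi-proper $K$ gives $X\setminus K=L_K|R_K$ with $L_K,R_K$ the two connected components.

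The main obstacle is part (d). The obvious attempt to separate $X\setminus\{z\}$ into $J_z^-$ and its complement fails in general, because $\overline{J_z^-}$ in $X$ may contain points of $X\setminus J$ that cannot be assigned cleanly to either side. The Hausdorff hypothesis is precisely what rescues the argument: it makes the arc $[z,c]_J$ closed in $X$, enabling the two-step approach of first cutting along the closed interval $[z,c]_J$ via the known separation at $c$, then reabsorbing $(z,c]_J$ onto the right-hand piece using $c\in\overline{R_c}$.
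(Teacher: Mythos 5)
Your proof is correct and takes essentially the same route as the paper's: (a)--(c) are read off from Lemma~\ref{separation-po-kuse} (the paper derives closedness of $\{c\}$ from (b) instead of from Hausdorffness, an immaterial difference), and in (d) and (e) you produce exactly the paper's separations --- $X\setminus\{z\}$ split into $L_c\setminus[z,c)$ and $(z,c]\cup R_c$ --- merely obtaining the needed closure identities by an intermediate application of Lemma~\ref{separation-po-kuse} to the closed arc $[z,c]$ rather than by the paper's direct computation. One small quibble: the disjointness of $A$ from $\overline{B}$ should be justified by $A=L_{K'}\subseteq L_c$ (hence $A\cap R_c=\emptyset$), since the cited containments $A\subseteq(X\setminus J)\cup J_z^-$ and $\overline{B}\subseteq[z,c]\cup R_c$ alone do not suffice --- $R_c$ may well meet $X\setminus J$ --- but the correct argument is immediate from what you have already established.
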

\begin{proof}  (a) follows from Lemma~\ref{separation-po-kuse}.

(b) Using (\ref{EQ:separationLR2}) from Lemma~\ref{separation-po-kuse} we get  $\overline{L_c}=L_c\cup \{\inf\{c\}\} = L_c\cup\{c\}$ and, similarly,
$\overline{R_c}=R_c\cup\{c\}$.

(c) It follows from (b) that $R_c=X\setminus(L_c\cup\{c\})=X\setminus\overline{L_c}$ is open. Similarly $L_c$ is open and so $\{c\}=X\setminus(L_c\cup R_c)$
is closed.

(d)
Fix a point $z\in J\setminus\{c\}$, say $z\prec c$.
Then $[z,c)\subseteq J_c^- \subseteq  L_c$.
Observe also that, since $X$ is Hausdorff, the compact set $[z,c]$ is closed and so we have $\overline{(z,c]} = \overline{[z,c)}=[z,c]$.

Put
$L=L_c \setminus [z,c)$ and $R = (z,c] \sqcup R_c$.
Then $X=L\sqcup \{z\} \sqcup R$ and we only need to show that $L$ and $R$ are separated.
First, $\overline{R}=[z,c]\cup \overline{R_c}=[z,c)\sqcup \overline{R_c}$.
Further, we show that $\overline{L}=L\cup\{z\}$. This follows from three facts. First, by (c),
$\overline{L}\subseteq \overline{L_c}=L_c\cup\{c\}$. Second, no point $x\in (z,c]$ belongs to $\overline{L}$
since $J_z^+$ is a neighborhood of $x$ disjoint with $L$. Third, $z\in\overline{L}$ since every neighborhood of $z$
intersects $J_z^-\subseteq L$. Then for the set $L_c=L\sqcup [z,c)$ we get
$\overline{L_c}=\overline{L} \cup [z,c] = \overline{L}\sqcup (z,c]$. So
$\overline{L}=\overline{L_c}\setminus (z,c]$. To summarize,
$\overline{L}=\overline{L_c}\setminus [z,c)$ and $\overline{R}=[z,c)\sqcup \overline{R_c}$.
Hence $\overline{L}\cap R
= \left(\overline{L_c}\setminus (z,c] \right)
  \cap
  \left((z,c]\sqcup R_c  \right)
\subseteq  \overline{L_c}\cap R_c=\emptyset$ and similarly ${L}\cap \overline{R} \subseteq {L_c}\cap \overline{R_c}=\emptyset$,
i.e. $L,R$ are separated.

(e) Let $K$ be a bi-proper subinterval of $J$. To show that $X\setminus K$ has exactly two components it is sufficient,
by Lemma~\ref{separation-po-kuse}, to show that $X\setminus K$ is not connected. If $K$ is degenerate this follows from (d).
So assume that $K$ is non-degenerate and choose a point $a$ from the interior of $K$. By (d), $a$ is a cut point of $X$ and,
by Lemma~\ref{separation-po-kuse}, the sets $L_a,R_a$ form a separation of $X\setminus\{a\}$. It follows that $(X\setminus K)\cap L_a$
and $(X\setminus K)\cap R_a$ form a separation of $X\setminus K$ and so $X\setminus K$ is not connected.
\end{proof}

\begin{lemma}\label{free.intrs.2}
Let $X$ be a connected compact Hausdorff space and $J$ be a free interval of $X$.
Assume that $X\setminus J$ is disconnected.
Then $X\setminus J$ has exactly two components $L_J,R_J$,
the set $\partial J$ is nowhere dense, has exactly two components  $\mathcal{L}_0,\mathcal{R}_0$
and the notation can be chosen in such a way that
$\mathcal{L}_0\subseteq L_J$, $\mathcal{R}_0\subseteq R_J$ and for each $c\in J$ it holds
\begin{equation}\label{EQ:L0R0}
 \overline{J_c^-} = \mathcal{L}_0\sqcup J_c^- \sqcup\{c\}, \quad
 \overline{J_c^+} = \{c\} \sqcup J_c^+ \sqcup \mathcal{R}_0
 \quad\text{and}\quad
 X\setminus\{c\} = (L_J\sqcup J_c^-) ~|~ (J_c^+\sqcup R_J)
  \,.
\end{equation}
\end{lemma}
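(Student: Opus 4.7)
The plan is to introduce the two ``ends'' of $J$ as continua inside $\partial J$ and then derive everything from a single ``clopen-in-$X$'' trick. Fix a homeomorphism $\phi\colon (0,1)\to J$ and set
\[
 \mathcal{L}_0 = \bigcap_{0<t<1}\overline{\phi((0,t))}
 \qquad\text{and}\qquad
 \mathcal{R}_0 = \bigcap_{0<t<1}\overline{\phi((t,1))}.
\]
Each is a nested intersection of nonempty compact connected subsets of the compact Hausdorff space $X$, hence itself a nonempty continuum. I would first check that $\partial J = \mathcal{L}_0 \cup \mathcal{R}_0$: given $p \in \overline{J}\setminus J$, if $p$ avoided both $\mathcal{L}_0$ and $\mathcal{R}_0$ it would miss some $\overline{\phi((0,t))}$ and some $\overline{\phi((s,1))}$, and then the closedness in $X$ of the compact middle piece $\phi([\min\{t,s\},\max\{t,s\}])$ would produce a neighborhood of $p$ disjoint from all of $J$, contradicting $p \in \overline{J}$.

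The engine of the proof is the observation that any $S \subseteq X \setminus J$ which is clopen in $X\setminus J$ and disjoint from $\overline{J}$ is clopen in $X$: closed in $X$ because $X\setminus J$ is, and open in $X$ because the relative openness of $S$ in $X\setminus J$ combines with $S\subseteq X\setminus\overline{J}$ to give openness in the open subspace $X\setminus\overline{J}$. By connectedness of $X$, no such $S$ can be nonempty and proper, so no piece of any separation of $X\setminus J$ may avoid $\overline{J}$. I would apply this three times. First, were $\mathcal{L}_0\cap\mathcal{R}_0 \ne \emptyset$, then $\partial J = \mathcal{L}_0\cup\mathcal{R}_0$ would be connected and in any separation $X\setminus J = E_1\,|\,E_2$ would lie on one side, exposing the other as a forbidden $S$; hence $\mathcal{L}_0\cap\mathcal{R}_0 = \emptyset$. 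Second, in such a separation the connected sets $\mathcal{L}_0,\mathcal{R}_0$ must land on opposite sides (else the same contradiction), and after relabelling we may assume $\mathcal{L}_0 \subseteq L_J := E_1$ and $\mathcal{R}_0 \subseteq R_J := E_2$. Third, if $L_J$ were disconnected then the part of any separation of $L_J$ not containing $\mathcal{L}_0$ would be another forbidden $S$, so $L_J$, and symmetrically $R_J$, is connected; thus $L_J,R_J$ are the only two components of $X\setminus J$. Since $\partial J = \mathcal{L}_0 \sqcup \mathcal{R}_0$ with both pieces connected, disjoint and closed, these are also the two components of the always-nowhere-dense set $\partial J$.

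For the displayed formulas, the same ``avoid a compact piece of $J$'' calculation shows, for $c = \phi(t_0) \in J$, that $\overline{J_c^-}$ meets $J$ exactly in $J_c^- \cup \{c\}$ and meets $X\setminus J$ exactly in $\mathcal{L}_0$, yielding $\overline{J_c^-} = \mathcal{L}_0 \sqcup J_c^- \sqcup \{c\}$; symmetrically $\overline{J_c^+} = \{c\} \sqcup J_c^+ \sqcup \mathcal{R}_0$. Writing $A = L_J \sqcup J_c^-$ and $B = J_c^+ \sqcup R_J$, the facts $\overline{L_J} = L_J$, $\overline{R_J} = R_J$, $\mathcal{L}_0 \subseteq L_J$ and $\mathcal{R}_0 \subseteq R_J$ give $\overline{A} = A \cup \{c\}$ and $\overline{B} = B \cup \{c\}$, whence $\overline{A} \cap B = \emptyset = A \cap \overline{B}$ and $X\setminus\{c\} = A\,|\,B$ is the required separation.

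The main obstacle is the component-counting in the second paragraph: components of a compact Hausdorff space need not be open, so one cannot bound their number by naive relative-separation arguments. The ``clopen-in-$X$'' observation, which converts any local separation of $X\setminus J$ that misses $\overline{J}$ into a contradiction with connectedness of $X$, is exactly what controls both the ``exactly two'' count and the connectedness of each $L_J,R_J$; the remaining verifications are routine.
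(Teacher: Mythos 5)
Your proposal is correct and follows essentially the same route as the paper: the ends $\mathcal{L}_0,\mathcal{R}_0$ are defined by the same nested intersections, $\partial J=\mathcal{L}_0\cup\mathcal{R}_0$ is established the same way, and the closure computations for $\overline{J_c^-},\overline{J_c^+}$ and the final separation of $X\setminus\{c\}$ are identical. Your ``clopen-in-$X$'' observation is just a unified repackaging of the separation arguments the paper carries out via Lemma~\ref{separation.trivial}(b) and the direct check that $X=A|(B\cup J\cup R_J)$, so no new idea is needed and none is missing.
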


\begin{proof}
Fix a separation $X\setminus J = L_J|R_J$ whose existence is guaranteed by Lemma~\ref{separation-po-kuse}.

Now we show the existence of $\mathcal{L}_0,\mathcal{R}_0$ with the required properties.
Since $J$ is a free interval in $X$ we may identify it with the \emph{real interval} $(0,1)$.
 For $0<\delta<1$ put $\mathcal{L}_\delta = (0,\delta)$ and $\mathcal{R}_\delta = (1-\delta,1)$.
 Then $\mathcal{L}_0=\bigcap_{\delta>0} {\overline{\mathcal{L}_\delta}}$, being the intersection of a nested family of nonempty compact connected sets
 in a compact Hausdorff space, is a nonempty compact connected set.
 By changing the notation of $\mathcal{L}_\delta,\mathcal{R}_\delta$, if necessary, we may assume that $\mathcal{L}_0\subseteq L_J$.
 Further, $\mathcal{L}_0\subseteq \overline{J}\setminus J$; indeed, the inclusion $\mathcal{L}_0\subseteq \overline{J}$ is trivial and a point from $J$ cannot belong to $\mathcal{L}_0$
 since $J$ is open.
 Similarly for $\mathcal{R}_0=\bigcap_{\delta>0} {\overline{\mathcal{R}_\delta}}$. Hence $\mathcal{L}_0\cup \mathcal{R}_0\subseteq \overline{J}\setminus J$.
 Also the converse inclusion holds since for every $\delta>0$ we have
 $\overline{J}= \overline{\mathcal{L}_\delta} \cup [\delta,1-\delta] \cup \overline{\mathcal{R}_\delta}$ and so $\overline{J}\setminus J\subseteq \bigcap_\delta (\overline{\mathcal{L}_\delta} \cup \overline{\mathcal{R}_\delta})=\mathcal{L}_0\cup \mathcal{R}_0$.
 We have thus proved that
 $$
  \partial{J} = \overline{J}\setminus J = \mathcal{L}_0\cup \mathcal{R}_0.
 $$
 By Lemma~\ref{separation.trivial}(b), each of the sets $L_J,R_J$ intersects $\partial{J}=\mathcal{L}_0\cup \mathcal{R}_0$. Since $\mathcal{L}_0\subseteq L_J$
 we must have $\mathcal{R}_0\subseteq R_J$. It follows that $\mathcal{L}_0,\mathcal{R}_0$ are disjoint which implies that they are the two components of $\partial J$.
 The boundary $\partial{J}$ is nowhere dense since $J$ is open.

 Now we prove that $L_J,R_J$ are connected.
 Suppose that $L_J$ is not connected and fix a separation $A|B$ of $L_J$. Then the connected set $\mathcal{L}_0$ lies in one of the sets $A,B$, say $\mathcal{L}_0\subseteq B$.
 Then $A,J$ are separated and so  $X=A|(B\cup J\cup R_J)$ is not connected, which is a contradiction.
 Analogously we can show that $R_J$ is connected.

Fix $c\in J$. It remains to show (\ref{EQ:L0R0}).
For every sufficiently small $\delta$ we have $J_c^-\supseteq \mathcal{L}_\delta$, so $\overline{J_c^-}\supseteq \bigcap_{\delta>0} \overline{\mathcal{L}_\delta} = \mathcal{L}_0$.
 Hence $\overline{J_c^-} \supseteq \mathcal{L}_0\sqcup J_c^- \sqcup\{c\}$.
 Further, for every sufficiently small $\delta>0$ the set
 $\overline{\mathcal{L}_\delta}\cup J_c^- \cup\{c\}=\overline{\mathcal{L}_\delta}\cup [\delta,c]$ is closed and contains $J_c^-$, hence it contains $\overline{J_c^-}$.
 Thus $\overline{J_c^-} \subseteq \bigcap_{\delta>0} \left( \overline{\mathcal{L}_\delta}\cup J_c^- \cup\{c\} \right) = \mathcal{L}_0\sqcup J_c^- \sqcup\{c\}$.
 We have proved the first formula in (\ref{EQ:L0R0}); analogously for the second one.
 The third formula follows from the first two ones and the fact that $L_J,R_J$ are separated and closed.
\end{proof}

\begin{proposition}\label{disc.intrs}
Let $X$ be a connected Hausdorff topological space and $J$ be a free interval in $X$. Consider the following conditions:
\begin{enumerate}
  \item[(1a)] $X\setminus J$ is not connected;
  \item[(1b)] $X\setminus J$ has exactly two components;

  \smallskip

  \item[(2a)] there is a point $x\in J$ which cuts $X$;
  \item[(2b)] every point $x\in J$ cuts $X$ into exactly two components (i.e. $J$ is a disconnecting interval, see Definition~\ref{D:disc});

  \smallskip

  \item[(3a)] there is a bi-proper subinterval $K$ of $J$ such that the set $X\setminus K$ is not connected;
  \item[(3b)] for every bi-proper subinterval $K$ of $J$ the set $X\setminus K$ has exactly two components;

  \smallskip

  \item[(4a)] there is a subinterval $K$ of $J$ such that the set $X\setminus K$ is not connected;
  \item[(4b)] for every subinterval $K$ of $J$ the set $X\setminus K$ has exactly two components.
\end{enumerate}
Then the following hold:
\begin{enumerate}
  \item[(i)]  $(4b)\Rightarrow(1b)\Rightarrow(1a)\Rightarrow(4a)$ and $(4b)\Rightarrow(2a)\Leftrightarrow (2b)\Leftrightarrow(3a)\Leftrightarrow (3b) \Rightarrow(4a)$.
  \item[(ii)] If $X$ is a \emph{compact} connected Hausdorff space then all the eight conditions $(1a)-(4b)$ are equivalent.
\end{enumerate}
In the case $(i)$ no other implication, except of those which follow by transitivity, is true.
\end{proposition}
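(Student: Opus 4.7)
\emph{Implications in (i).} Most arrows are immediate from the definitions. Thus $(4b)\Rightarrow(1b)$ follows by specialising to $K=J$; $(4b)\Rightarrow(2a)$ by taking $K$ to be a singleton (which is always bi-proper); $(1b)\Rightarrow(1a)\Rightarrow(4a)$ and $(3b)\Rightarrow(3a)\Rightarrow(4a)$ are tautological; and the equivalences $(2a)\Leftrightarrow(2b)$ and $(2b)\Leftrightarrow(3b)$ fall out of Lemma~\ref{separation2}(d),(e). The only step requiring real work is $(3a)\Rightarrow(2a)$. To prove it, take the witnessing bi-proper $K$; if $K$ is degenerate we are done, otherwise Lemma~\ref{separation-po-kuse} supplies $X\setminus K=L_K|R_K$ with $\overline{L_K}=L_K\cup\{\inf K\}$ and $\overline{R_K}=R_K\cup\{\sup K\}$. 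Choosing $c$ strictly between $\inf K$ and $\sup K$ and noting that compact subintervals of $J$ are closed in $X$ by Hausdorffness, a direct computation shows that $A=L_K\cup[\inf K,c)$ and $B=(c,\sup K]\cup R_K$ form a separation of $X\setminus\{c\}$, giving $(2a)$.

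\emph{The compact case (ii).} It suffices to prove $(1a)\Rightarrow(4b)$ under compactness. Lemma~\ref{free.intrs.2} provides the decomposition $X\setminus J=L_J\sqcup R_J$ into two closed connected pieces, the boundary components $\mathcal L_0\subseteq L_J$ and $\mathcal R_0\subseteq R_J$ of $\partial J$, and the separation $X\setminus\{c\}=(L_J\sqcup J_c^-)|(J_c^+\sqcup R_J)$ for every $c\in J$. This already delivers $(1b)$ and $(2b)$, hence also $(3b)$. For an arbitrary subinterval $K\subseteq J$ three cases arise: $K=J$ (use $(1b)$), $K$ bi-proper (use $(3b)$), or $J\setminus K$ is a single connected tail, say $J_c^+$ for some $c\in J$ (the mirror case being analogous). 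In the last case I will verify, via the closure formula $\overline{J_c^+}\supseteq\mathcal R_0\subseteq R_J$ from~(\ref{EQ:L0R0}), that $X\setminus K$ separates as $L_J\mid(J_c^+\sqcup R_J)$ into exactly two connected pieces: $L_J$ is closed and connected, while $J_c^+\sqcup R_J$ is connected because its two summands share the nonempty boundary set $\mathcal R_0$.

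\emph{Non-implications in (i).} The last sentence of the proposition requires counterexamples showing that, outside compactness, no implication beyond those forced by transitivity holds. My plan is to exhibit two non-compact connected Hausdorff spaces. The first has $J$ as a disconnecting interval yet $X\setminus J$ connected, obtained by attaching to $J$ an external bridge whose two ends approach but never meet the ``virtual'' endpoints of $J$; this witnesses $(3b)\not\Rightarrow(1a)$ and hence rules out every arrow from the cut-point cluster into the $(1*)$ cluster. The second has $X\setminus J$ with infinitely many components, while no point of $J$ is a cut point, obtained as an infinite ``fan'' of disjoint arcs accumulating on a single end of $J$; this witnesses $(1a)\not\Rightarrow(1b)$ and $(1a)\not\Rightarrow(2a)$. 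Together with the trivial examples of $\RRR$ and $\SSS^1$, these cover every non-implication not already granted by the two chains in (i).

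The main obstacle is the third case of the analysis in Part~(ii), where the subinterval $K$ shares one ``virtual'' endpoint with $J$: one has to identify which tail of $J\setminus K$ glues to which of $L_J,R_J$ through the boundary component $\mathcal L_0$ or $\mathcal R_0$ via~(\ref{EQ:L0R0}), and verify the resulting separation of $X\setminus K$ is into exactly two connected parts. The counterexamples also require some care with Hausdorffness and connectedness on deliberately non-compact spaces.
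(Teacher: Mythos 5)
Your part (i) is essentially correct and follows the paper's route: the only non-trivial arrow is $(3a)\Rightarrow(2a)$, and your separation $A=L_K\cup[\inf K,c)$, $B=(c,\sup K]\cup R_K$ at an interior point $c$ of $K$ is a harmless variant of the paper's argument (which cuts at $a=\inf K$ instead); both rest on Lemma~\ref{separation-po-kuse} and the fact that $[\inf K,\sup K]$ is closed by Hausdorffness.

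Part (ii) has a genuine gap. Proving $(1a)\Rightarrow(4b)$ is \emph{not} sufficient: from part (i), $(4a)$ sits at the bottom of both implication chains, so to make all eight conditions equivalent you must also show that $(4a)$ implies something above it, and the only candidate route is $(4a)\Rightarrow(1a)$ (or into the cut-point cluster). A witnessing subinterval $K$ for $(4a)$ may be a one-sided tail of $J$, in which case it is neither $J$ nor bi-proper and neither $(1a)$ nor $(3a)$ follows formally. This is exactly where the paper spends its compactness: it enlarges $K$ to $K'=K\cup J_K^-$ by exhausting $J_K^-$ with intervals $(a_n,a]$ and taking the intersection $\mathcal{L}=\bigcap_n\mathcal{L}_n$ of a nested sequence of nonempty compact separated pieces, then repeats the argument to reach $J$ itself; none of this appears in your proposal. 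Your case analysis for $(1b)\wedge(3b)\Rightarrow(4b)$ via Lemma~\ref{free.intrs.2} and \eqref{EQ:L0R0} is fine, but it does not close the loop. A second, smaller defect concerns the counterexamples: you never produce a space with $(1b)\wedge\neg(2a)$, which is indispensable (the paper's Example~\ref{EX:1b-2a}, where each end of $J$ accumulates on \emph{both} components of $X\setminus J$); and your ``infinite fan accumulating on a single end of $J$'' in fact makes every point of $J$ a cut point (the tail of $J$ away from the accumulating end splits off), so it does not witness $\neg(2a)$ as you claim. Neither $\RRR$ nor $\SSS^1$ repairs this, since $\SSS^1$ with a proper free arc satisfies none of the eight conditions.
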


\begin{proof}
(i)
It is sufficient to show (2a)$\Leftrightarrow$(2b)$\Leftrightarrow$(3a)$\Leftrightarrow$(3b) since all other needed implications are trivial.
Since the implications (3b)$\Rightarrow$(2b)$\Rightarrow$(2a)$\Rightarrow$(3a) are also trivial, it remains to prove that (3a)$\Rightarrow$(3b).
However, (2a)$\Rightarrow$(3b) holds by Lemma~\ref{separation2} and so it is sufficient to prove
(3a)$\Rightarrow$(2a).
To this end, let $K\subseteq J$ be a bi-proper subinterval of $J$ such that $X\setminus K$ is not connected.
If $K$ is degenerate there is nothing to prove. Otherwise put $a=\inf K$ and $b=\sup K$. Then, since $X$ is Hausdorff,
the same argument as in the last paragraph of the proof of
Lemma~\ref{separation-po-kuse} gives that $[a,b]$ is a closed set in $X$ and so $\overline{K}=[a,b]$.
By Lemma~\ref{separation-po-kuse}, $X\setminus K=L_K|R_K$ where $L_K,R_K$ are connected and contain $J_K^-,J_K^+$, respectively.
We are going to prove that $a$ is a cut point of $X$. We do not know whether $a$ belongs to $L_K$ or to $K$, therefore
denote $L_K'=L_K\setminus\{a\}$ and $K'=K\setminus\{a\}$.
Obviously, $\overline{K'}=\overline{K}=[a,b]$ and $\overline{L_K'}=\overline{L_K}=L_K\cup\{a\}$ by Lemma~\ref{separation-po-kuse}.
It follows that $\overline{K'}\cap \overline{L_K'}=\overline{K}\cap \overline{L_K}=\{a\}$ and hence
$\overline{K'}\cap {L_K'}={K'}\cap \overline{L_K'}=\emptyset$.
Using these facts and the fact that $L_K$ and $R_K$ are separated we immediately get that
 $X\setminus\{a\} = L_K' | (K'\sqcup R_K)$.

(ii)
Assume that $X$ is also compact. We need to prove that (4a)$\Rightarrow$(4b).
The implications (1a)$\Rightarrow$(1b) and (1a)$\Rightarrow$(2a) follow from Lemma~\ref{free.intrs.2}. Moreover, we know
that (2a) is equivalent with (3b). So to finish the proof it is sufficient to show that (4a)$\Rightarrow$(1a) and that
(1b) and (3b) together imply (4b).

First we show that (4a)$\Rightarrow$(1a).
So assume (4a). We may identify $J$ with the real interval $(0,1)$.
Write $X\setminus K = L_K|R_K$ as in Lemma~\ref{separation-po-kuse}.
We are going to show that $K'=K\cup J_K^-$ also cuts $X$.
This is trivial if $J_K^-=\emptyset$, so assume that
$J_K^-\ne\emptyset$. Put $a=\inf K>0$. Take a decreasing sequence $a>a_1>a_2>\dots$ of real numbers converging to $0$.
Put $K_n=(a_n,a] \cup K$ and $\mathcal{L}_{n}=L_K\setminus (a_n,a]$. Then $\mathcal{L}_{n}$ is nonempty and trivially for the interval $K_n\subseteq J$
we have $X\setminus K_n = \mathcal{L}_{n}|R_K$. Since $\mathcal{L}_n \cap J = J_{K_n}^-$, the sets $\mathcal{L}_n$ and $R_K$ in this separation correspond to the sets $L_{K_n}$ and $R_{K_n}$
from Lemma~\ref{separation-po-kuse}. Hence $\overline{\mathcal{L}_{n}}=\mathcal{L}_{n}\cup\{\inf K_n\}=\mathcal{L}_{n}$.
Since $(\mathcal{L}_{n})_{n=1}^\infty$ is a nested sequence of nonempty compact sets, the set $\mathcal{L}=\bigcap_n \mathcal{L}_{n}$ is nonempty.
Now, using the fact that $K'=\bigcup_n K_n$, we have that $X\setminus K' = \mathcal{L}|R_K$, i.e. $K'$ cuts $X$.
Once we know that $K'=K\cup J_K^-$ cuts $X$, by an analogous argument we get that also $J=K'\cup J_{K'}^+$ cuts $X$, i.e. we get (1a).

To show that (1b) and (3b) together imply (4b), fix a subinterval $K$ of $J$. We need to show that $X\setminus K$ consists of two components.
This is trivial if $K=J$ (use (1b)) or if $K$ is bi-proper (use (3b)). It remains to consider the case when $J_K^-=\emptyset$ and $J_K^+\ne\emptyset$
or conversely. Suppose we are in the former case. By (1b), $X\setminus J = L_J|R_J$, where $L_J$ and $R_J$ are connected. We claim that $L_J$ and $J_K^+ \sqcup R_J$ are the two components
of $X\setminus K$. Indeed, the sets $L_J$ and $J_K^+ \sqcup R_J$ are separated (use that for $c\in K$, $\overline{J_K^+}\subseteq \overline{J_c^+}$ and Lemma~\ref{free.intrs.2}
gives $\overline{J_c^+}\cap L_J=\emptyset$). Further, we know that $L_J$ is connected.
Finally, the sets $J_K^+$ and $R_J$ are connected and not separated
(if $d\in J_K^+$ then $\overline{J_K^+}\supseteq\overline{J_d^+}$ and, by Lemma~\ref{free.intrs.2}, $\overline{J_d^+}\cap R_J \supseteq \mathcal{R}_0\ne\emptyset$).
Hence also the set  $J_K^+ \sqcup R_J$ is connected.

To finish the proof of the proposition we need to show that in the case (i) no other implication, except of those which follow by transitivity, is true.
It is easy to see that this requires three counterexamples. We collect them below (see Examples~\ref{EX:separ-three-comps}--\ref{EX:3b-1a}).
\end{proof}

\begin{example}[\emph{In Proposition~\ref{disc.intrs}(i), (1a)$\not\Rightarrow$(1b)}]\label{EX:separ-three-comps}
Consider the topologist's sine curve with the interval of convergence replaced by just two points of it.
If $J$ is the maximal free interval of this space then $X\setminus J$ consists of three degenerate components.
\end{example}

\begin{example}[\emph{In Proposition~\ref{disc.intrs}(i), (1b)$\not\Rightarrow$(2a)}]\label{EX:1b-2a}
Consider the (non-compact) subspace $X$ of the Euclidean plane defined by
$$
 X = J\sqcup C_{-1}\sqcup C_1
$$
where $C_i=[-1,1]\times \{i\}$ for $i\in\{-1, 1\}$ and $J$ is the graph
of the function $x\mapsto x\cdot \sin(1/1-\abs{x})$, $x\in (-1,1)$.
Then $X$ is connected, $J$ is a free interval and $X\setminus J=C_{-1}\cup C_1$ has two components.
However, no point $x\in X$ is a cut point of $X$.
\end{example}

\begin{example}[\emph{In Proposition~\ref{disc.intrs}(i), (3b)$\not\Rightarrow$(1a)}]  \label{EX:3b-1a}
Let $X=(0,1)$ or $X=[0,1)$ be a subset of the real line. Then $J=(0,1)$ is a disconnecting interval of $X$ and $X\setminus J$ is connected.
\end{example}

In Proposition~\ref{disc.intrs} we have assumed that $X$ is Hausdorff.
Without this assumption we would not have the equivalence of the four conditions (2a), (2b), (3a), (3b).
For instance the space in the following example satisfies (2a), (3a) but does not satisfy (2b), (3b).

\begin{example}\label{EX2:separ-three-comps}
 Let $X$ be the line with two origins, i.e. the factor space of $\RRR\times \{-1,1\}$ obtained by identifying $x \times (-1)$ with $x \times 1$ for every $x\ne 0$.
 Let $p:\RRR\times \{-1,1\} \to X$ be the corresponding factor map.
 Then $J=p((-1,1)\times 1)$ is a free interval containing a cut point of $X$, e.g. $c=p(1/2 \times 1)$. But $z=p(0\times 1)\in J$ is not a cut point of $X$.
 So we have (2a) but not (2b). Also, $K=p((-1/2,1/2)\times 1)$ is  a bi-proper subinterval of $J$ such that $X\setminus K$ is not connected but has precisely
 three components. So we have (3a) but not (3b).
\end{example}



\begin{thebibliography}{999999999}


\bibitem[ALM00] {ALM} LL. Alsed\`{a}, J. Llibre, M. Misiurewicz,
       \textit{Combinatorial dynamics and entropy in dimension one}, Second edition,
       Advanced Series in Nonlinear Dynamics, 5, World Scientific Publishing Co., Inc., River Edge, NJ, 2000.

\bibitem[AKLS99] {AKLS} Ll. Alsed\`{a}, S. Kolyada, J. Llibre, \mL . Snoha,
                \textit{Entropy and periodic points for transitive maps},
                Trans. Amer. Math. Soc. \textbf{351}~(1999), no. 4, 1551--1573.

\bibitem[Ba01]{Baldwin01} S. Baldwin,
  \textit{Entropy estimates for transitive maps on trees},
  Topology \textbf{40}~(2001), no.~3, 551--569.

\bibitem[Ba97] {B}  J. Banks,
              \textit{Regular periodic decompositions for topologically transitive maps},
              Ergodic Theory Dynam. Systems \textbf{17}~(1997), no.~3, 505--529.

\bibitem[BC92] {BC}  L. S. Block, W. A. Coppel,
    \textit{Dynamics in one dimension}, Lecture Notes in Mathematics, 1513. Springer-Verlag, Berlin, 1992.

\bibitem[BDHSS09] {BDHSS}  F. Balibrea, T. Downarowicz, R. Hric, \mL{}. Snoha, V. \v Spitalsk\'y,
    \textit{Almost totally disconnected minimal systems},
    Ergodic Theory Dynam. Systems \textbf{29}~(2009), no.~3, 737--766.


\bibitem[BHS03] {BHS}  F. Balibrea, R. Hric, \mL{}. Snoha,
    \textit{Minimal sets on graphs and dendrites},
    Dynamical systems and functional equations (Murcia, 2000),
    Internat. J. Bifur. Chaos Appl. Sci. Engrg. \textbf{13}~(2003), no.~7, 1721--1725.



\bibitem[Bl84] {Blo84}  A. M. Blokh,
 \textit{On transitive mappings of one-dimensional branched manifolds. (Russian)},
  Differential-difference equations and problems of mathematical physics (Russian),
  3--9, 131, Akad. Nauk Ukrain. SSR, Inst. Mat., Kiev, 1984.

\bibitem[Bl86] {Blo}  A. M. Blokh,
   \textit{Dynamical systems on one-dimensional branched manifolds. I (Russian)},
   Teor. Funktsii Funktsional. Anal. i Prilozhen. \textbf{46}~(1986), 8--18;
   translation in J. Soviet Math. \textbf{48}~(1990), no.~5, 500--508.

\bibitem[CE80] {CE80}  P. Collet, J. P. Eckmann,
   \textit{Iterated maps on the interval as dynamical systems},
   Progress in Physics, 1. Birkh\"auser, Boston, Mass., 1980.

\bibitem[dMvS93] {dMvS} W. de Melo, S. van Strien,
   \textit{One-dimensional dynamics}, Ergebnisse der Mathematik und ihrer Grenzgebiete (3)
   [Results in Mathematics and Related Areas (3)], 25, Springer-Verlag, Berlin, 1993.


\bibitem[D32] {D32} A. Denjoy,
              \textit{Sur les courbes definies par les \'equations diff\'erentielles \`a la surface du tore} (French),
              J. Math. Pures Appl. \textbf{11}~(1932), 333--375.


\bibitem[DY02]{DY} T. Downarowicz, X. Ye,
  \textit{When every point is either transitive or periodic},
  Colloq. Math. \textbf{93}~(2002), no.~1, 137--150.

\bibitem[HKO11] {HKO}
    G. Hara\'nczyk, D. Kwietniak and P. Oprocha,
    \textit{A note on transitivity, sensitivity and chaos for graph maps},
    J. Difference Eq. Appl. \textbf{17}~(2011), no.~10, 1549--1553.

\bibitem[Il00]{I} A. Illanes,
  \textit{A characterization of dendrites with the periodic-recurrent property},
  Topology Proc. \textbf{23}~(1998), Summer, 221--235.

\bibitem[Ka88] {Kaw} K. Kawamura,
    \textit{A direct proof that each Peano continuum with a free arc admits no expansive homeomorphisms},
    Tsukuba J. Math. \textbf{12}~(1988), no. 2, 521--524.

\bibitem[Ki58] {Kin} S. Kinoshita,
                 \textit{On orbits of homeomorphisms}, Colloq. Math. \textbf{6}~(1958), 49–-53.

\bibitem[KS97] {KS} S. Kolyada, \mL. Snoha,
              \textit{Some aspects of topological transitivity -- a survey},
              Iteration theory (ECIT 94) (Opava), 3--35, Grazer Math. Ber., \textbf{334},
              Karl Franzens-Univ. Graz, Graz, 1997.

\bibitem[KST01] {KST} S. Kolyada, \mL. Snoha, S. Trofimchuk,
               \textit{Noninvertible minimal maps},
               Fundamenta Mathematicae \textbf{168}~(2001), 141--163.

\bibitem[Ku68] {Kur2} K. Kuratowski,
               \textit{Topology. Vol. II},
               Academic Press, New York-London,
               Pa\'nstwowe Wydawnictwo Naukowe Polish Scientific Publishers, Warsaw, 1968.

\bibitem[Kw11] {Kwi11} D. Kwietniak,
               \textit{Weak mixing implies mixing},
               preprint, 2011.

\bibitem[Ma11] {Mal11} P. Mali\v ck\'y,
  \textit{Backward orbits of transitive maps}
  J. Difference Equ. Appl., to appear.

\bibitem[MS07] {MShi} J. Mai, E. Shi,
     \textit{The nonexistence of expansive commutative group actions on Peano continua having free dendrites},
     Topology Appl. \textbf{155}~(2007), no. 1, 33--38.

\bibitem[MS09] {MS} J.-H. Mai, S. Shao,
  \textit{$\overline R=R\cup\overline P$ for graph maps}
  J. Math. Anal. Appl. \textbf{350}~(2009), no.~1, 9--11.


\bibitem[Na92]{Nad}
  S. B. Nadler,
  \textit{Continuum theory. An introduction},
  Monographs and Textbooks in Pure and Applied Mathematics, 158,
  Marcel Dekker, Inc., New York, 1992.


\bibitem[Si92]{Silv}
 S. Silverman,
 \textit{On maps with dense orbits and the definition of chaos},
 Rocky Mountain J. Math. \textbf{22}~(1992), no.~1, 353--375.

\bibitem[SW09] {SW} E. Shi, S. Wang,
  \textit{The ping-pong game, geometric entropy and expansiveness for group actions on Peano continua having free dendrites},
  Fund. Math. \textbf{203}~(2009), no. 1, 21--37.

\bibitem[XYZH96]{Warsaw96} J. C. Xiong, X. D. Ye, Z. Q. Zhang, J. Huang,
  \textit{Some dynamical properties of continuous maps on the Warsaw circle} (Chinese),
  Acta Math. Sinica (Chin. Ser.) \textbf{39}~(1996), no.~3, 294--299.

\bibitem[Ye92]{Ye}
  X. D. Ye,
  \textit{$D$-function of a minimal set and an extension of Sharkovskii's theorem to minimal sets},
  Ergodic Theory Dynam. Systems \textbf{12}~(1992), no.~2, 365--376.

\bibitem[ZPQY08]{Warsaw08} G. Zhang, L. Pang, B. Qin, K. Yan,
  \textit{The mixing properties on maps of the Warsaw circle},
  J. Concr. Appl. Math. \textbf{6}~(2008), no.~2, 139--144.


\end{thebibliography}
\end{document}